\documentclass[preprint,10pt,3p,numbers,square]{elsarticle}

\usepackage{amssymb}
\usepackage{amsmath}
\usepackage{amsthm}

\usepackage{xfrac}

\usepackage{tikz}
\usepackage{comment}
\usepackage{hyperref}

\theoremstyle{definition}
\newtheorem{ass}{Assumption}

\usepackage{empheq}

\usepackage{array}    
\usepackage{makecell} 

\newcommand{\pupt}[1]{\frac{\partial {#1}}{\partial t}}

\newcommand{\dudt}[1]{\frac{d {#1}}{dt}}

\newcommand{\ddtu}[1]{\frac{d}{dt}\left( {#1} \right)}
\newcommand{\pupx}[1]{\frac{\partial {#1}}{\partial x}}

\newcommand{\bm}[1]{\boldsymbol{#1}}

\newdefinition{rmk}{Remark}
\newdefinition{dfn}{Definition}
\newdefinition{thm}{Theorem}
\newdefinition{cor}{Corollary}
\newdefinition{lem}{Lemma}
\newdefinition{prop}{Proposition}

\DeclareMathOperator*{\argmin}{arg\,min}

\usepackage{lineno}

\begin{document}

\begin{frontmatter}



\title{On the Existence of Pressure-Equilibrium-Preserving Numerical Fluxes for Supercritical Fluids}


\author[1,2]{R.B. Klein\corref{cor1}} 
\ead{rbk@cwi.nl}

\affiliation[1]{organization={Delft University of Technology, Process \& Energy},
            addressline={Leeghwaterstraat 39}, 
            city={Delft},
            postcode={2628 CB}, 
            state={Zuid-Holland},
            country={The Netherlands}}

\affiliation[2]{organization={Centrum Wiskunde \& Informatica, Scientific Computing},
            addressline={Science Park 123}, 
            city={Amsterdam},
            postcode={1098 XG}, 
            state={Noord-Holland},
            country={The Netherlands}}


\cortext[cor1]{Corresponding author}

\begin{abstract}
In this work we propose a new existence theorem for numerical-flux functions for supercritical fluids that are pressure-equilibrium-preserving (PEP). In particular, we characterize the existence of consistent numerical-flux functions that satisfy an algebraic PEP property. Our theory links the existence of PEP schemes to geometric properties of the equation of state describing the thermodynamics of the fluid. When these geometric properties fail for a pair of states on the same isobar, no PEP schemes of the considered form can exist on a domain containing those states. In our analysis the equation of state itself can be fully general only needing to satisfy some fundamental thermodynamic principles. Recently, PEP compatibility conditions for general equations of state have been derived \cite{channodal} that rely on the existence of certain thermodynamic derivatives which are not guaranteed to be defined under fundamental thermodynamic principles. The geometric conditions in our theory do not depend on the existence of these derivatives and recover the recent compatibility conditions in the case that these derivatives are defined. Finally, using numerical experiments we demonstrate for two supercritical fluids that our existence conditions are restrictive and thus that no PEP schemes of the form we consider exist on the domain we specify for these fluids. Using our geometric perspective, we also shed light on mechanisms by which PEP schemes can develop numerical issues, which we also demonstrate using numerical experiments.
\end{abstract}



\begin{keyword}
Structure preservation \sep Existence analysis \sep Supercritical fluids \sep Numerical-flux functions \sep Pressure-equilibrium preservation



\end{keyword}

\end{frontmatter}


\section{Introduction}
Modern engineering applications increasingly operate at temperatures and pressures above the critical points of common working fluids. In this supercritical regime \cite{guardonenonideal}, thermodynamic behavior is strongly non-ideal and nonlinear, requiring real-gas equations of state (EOS). In particular, the disappearance of distinct liquid and gas phases allows rapid transitions between gas-like and liquid-like properties. In fluid flows, this can produce material interfaces with large density variations while pressure and temperature remain nearly constant.

Numerical schemes frequently develop spurious pressure oscillations across such interfaces, which can be amplified by the nonlinear thermodynamics and destabilize simulations \cite{lacazecomparison}. This has motivated increasing interest in pressure-equilibrium-preserving (PEP) schemes, which discretely preserve constant-pressure and constant-velocity equilibrium solutions of the Euler equations and thereby prevent the generation of pressure oscillations across equilibrium material interfaces. 

Pressure-equilibrium preservation has been extensively studied for calorically perfect gases, particularly in the context of split-form and kinetic-energy-preserving discretizations \cite{shimapreventing, ranochapreventing, demichelenovel}. For multi-component and real-fluid flows, earlier approaches instead relied primarily on quasi-conservative, pressure-based, or double-flux formulations \cite{abgrallprevent, terashimaapproach, kawairobust, maentropy, lacazecomparison}, mostly trading exact conservation for improved pressure equilibrium. A quasi-conservative approach for real gases is given in \cite{baioscillation}. In \cite{bernadeskinetic, xucentral}, real-gas PEP formulations are derived by evolving pressure rather than total energy. More recent work has sought fully conservative approximately or exactly PEP discretizations for general equations of state \cite{terashimaapproximately, coppolapressure, degrendeleconstruction, channodal}.

However, even though there has been a considerable effort to derive PEP discretizations for general EOS, no proposed fully conservative PEP scheme so far is free of potential singularities. This raises the question of whether schemes with these properties for real gases exist at all. A first step to answering this question was made in \cite{terashimaapproximately}, where a formal derivation was given of a necessary PEP compatibility condition for numerical schemes. This condition was recently revisited by \cite{channodal}. Noting the compatibility condition was analogous to Tadmor's entropy-conservation condition \cite{tadmorentropy, artianoaffordable}, they derived an algebraically equivalent compatibility condition \cite{channodal} to the one proposed in \cite{terashimaapproximately}. Both works rely on the assumption that a certain thermodynamic derivative exists. Essentially, this requires that the so-called Gr\"uneisen parameter determined by the EOS does not vanish. Although this is a very reasonable assumption for supercritical fluids \cite{mausbachcomparative}, this assumption is not based on any fundamental thermodynamic principle \cite{menikoffriemann} and therefore prevents the derived compatibility conditions from being fully general. More importantly, neither work provides a full existence theorem.

In this work, we will resolve these issues and provide a general existence characterization. To isolate the constraints imposed by equilibrium preservation from questions of flux regularity and the well-definedness of numerical schemes, we pose PEP as an algebraic property for numerical fluxes. Furthermore, rather than assuming the required derivatives exist as in \cite{channodal, terashimaapproximately}, we will adopt a novel geometric perspective removing the need for this assumption entirely. Based on fundamental thermodynamic principles alone, we are then able to derive necessary geometric conditions on a general real-gas EOS for the existence of consistent and algebraically PEP numerical-flux functions. When the thermodynamic derivatives assumed in \cite{channodal,terashimaapproximately} exist, our geometric conditions recover the compatibility condition derived in \cite{channodal}. Assuming the derived conditions are satisfied we can also construct a consistent and algebraically PEP numerical-flux function showing the conditions are also sufficient, finishing the characterization. 

The necessary and sufficient geometric conditions are easy to verify from a single plot of the EOS. For a few fluids and EOS we will demonstrate that there exist configurations of supercritical material gas-like/liquid-like interfaces for which the conditions do not hold and PEP schemes do not exist if their domain includes these interfaces. Our results will thus show that although it is possible to derive consistent numerical-flux functions with the algebraic PEP property for many pairs of points \cite{coppolapressure, degrendeleconstruction, channodal}, for many real-gas EOS, discretizations using these flux functions will inevitably contain singularities which cannot be removed if the domain is taken to be the full supercritical regime. However, even when PEP schemes for real gases have no singularities, the geometry of the EOS can still cause quite severe stability issues. Remarkably, these stability issues can occur both in under-resolved and even well-resolved simulations, showing that care is needed when using PEP schemes for supercritical fluids. We demonstrate these issues using resolved and under-resolved numerical experiments.

The content of this work is organized as follows. In \autoref{sec:setting} we define the mathematical setting of our analysis. In \autoref{sec:PEP} we introduce the PEP property including some basic objects we will use repeatedly throughout the characterization. In \autoref{sec:characterization} we prove the necessary and sufficient conditions for the existence of consistent numerical-flux functions with an algebraic PEP property and compare our conditions to the compatibility conditions derived in \cite{channodal}. In \autoref{sec:experiments} we carry out some experiments to demonstrate the stability issues of PEP schemes and that real fluids and EOS can violate the derived conditions and thus that no PEP schemes exist. Finally, we conclude in \autoref{sec:conclusion}.

\section{Assumptions and mathematical setting}\label{sec:setting}
\subsection{Equation of state}
To model the thermodynamics of fluids an equation of state (EOS) is used which describes the thermodynamic state of a fluid in so-called thermodynamic equilibrium \cite{menikoffriemann, holystthermodynamics}. An EOS can take different forms, but a form that is particularly convenient for us is to provide a thermodynamic potential in terms of its natural variables. One such thermodynamic potential is the specific\footnote{Throughout, the adjective `specific' is to be read as `per unit mass'.} internal energy $e : \mathcal{T} \rightarrow \mathbb{R}$. Here, $\mathcal{T} \subset \mathbb{R}^2$ is the thermodynamic state space of the natural variables of the EOS $e$. Throughout, we will reserve calligraphic letters for sets. For fluid-dynamical purposes $\mathcal{T}$ can be defined as:
\begin{equation}
    \mathcal{T} := \{(\nu, \sigma) \in \mathbb{R}^2\, :\, \nu > 0,\, \sigma \geq \sigma_{\min} \},
    \label{eq:thermodynamicstatespace}
\end{equation}
for some $\sigma_{\min} \in \mathbb{R}$, where $(\nu,\sigma) \in \mathcal{T}$ denote the natural variables: specifically, $\nu$ is referred to as the specific volume and $\sigma$ is the specific entropy. To denote a point in $\mathcal{T}$ described using $(\nu,\sigma)$-coordinates we will write:
\begin{equation*}
    \bm{\eta} := (\nu,\sigma) \in \mathcal{T}.
\end{equation*}
We will reserve bold letters for vectors. On the thermodynamic state space $\mathcal{T}$ we can assume $e$ is $C^1$ and piecewise twice continuously differentiable \cite{menikoffriemann}. In fact, many quantities of interest can be defined as derivatives of $e$. For example, the first derivatives give pressure and temperature, respectively:
\begin{equation}
    p(\bm{\eta}) := -e_\nu(\bm{\eta}), \qquad T(\bm{\eta}) := e_\sigma(\bm{\eta}).
    \label{eq:pressuretemperature}
\end{equation}
For brevity, partial derivatives will be denoted by application of a subscript e.g. $e_{\nu}$ for the $\nu$-derivative of $e$. 

In our analysis, we will be specifically interested in fluids that are supercritical \cite{guardonenonideal}. This is a thermodynamic state characterized by the disappearance of phase boundaries between liquids and gases. Given a so-called critical pressure and temperature $p_{\mathrm{crit}},\,T_{\mathrm{crit}} \in \mathbb{R}_+$, respectively, it holds for supercritical thermodynamic states $\bm{\eta} \in \mathcal{T}$ that $p(\bm{\eta}) > p_{\mathrm{crit}}$ and $T(\bm{\eta}) > T_{\mathrm{crit}}$. However, there are a number of difficulties in mathematically modeling the state space of supercritical thermodynamic states for a general EOS. In particular, for high enough pressures, phase change from a supercritical fluid into a solid could occur. Moreover, some EOS do not model supercritical effects and thus there may not be meaningful values of $p_{\mathrm{crit}}$ and $T_{\mathrm{crit}}$. Therefore, we will model the supercritical regime by any subset $\mathcal{S}  \subset \mathcal{T}$ where the following general characteristics of supercritical thermodynamics hold.
\begin{dfn}[Admissible supercritical regime]  
    \label{dfn:admissiblesupercriticalregime}
    A nonempty subset $\mathcal{S} \subset \operatorname{int}(\mathcal{T})$, open in $\mathbb{R}^2$, is an \emph{admissible supercritical regime} if the following conditions hold:
    \begin{enumerate}
        \item \emph{(Regularity)}: the internal energy is twice continuously differentiable $e \in C^2(\mathcal{S})$.
        \item \emph{(Monotonicity)}: for all $\bm{\eta} \in \mathcal{S}$ the pressure and temperature are positive:
        \begin{equation*}
            p(\bm{\eta}) > 0, \qquad T(\bm{\eta}) > 0,
        \end{equation*}
        and for each fixed $\nu$ the map $\sigma \mapsto e(\nu,\sigma)$ is injective on its domain in $\mathcal{S}$.
        \item \emph{(Strict stability)}: for all $\bm{\eta} \in \mathcal{S}$ it holds that:
        \begin{equation*}
            e_{\nu\nu}(\bm{\eta}),\ e_{\sigma\sigma}(\bm{\eta}) > 0, \qquad e_{\nu\nu}(\bm{\eta})e_{\sigma\sigma}(\bm{\eta}) > (e_{\nu\sigma}(\bm{\eta}))^2.
        \end{equation*}
    \end{enumerate}
\end{dfn}
For many EOS this definition will certainly include nonsupercritical states, but it includes all the properties of a supercritical fluid that we need to facilitate our analysis. Together, the strict-stability and regularity conditions prevent the inclusion of phase-transition regions in $\mathcal{S}$ as thermodynamic stability is lost in phase transitions and the physics of multiphase systems causes kinks in first EOS derivatives \cite{holystthermodynamics}. The positivity of pressure and temperature in the monotonicity condition reflects general thermodynamic behavior away from absolute-zero temperature, while the injectivity assumption is stated explicitly for our convenience, but can generally be proven for most EOS. 

\subsection{Density-energy coordinates}
Although most of our assumptions on the EOS are most naturally stated in $(\nu,\sigma)$-coordinates, for fluid-dynamical analyses it is generally preferable to work in terms of the variables given by mass density $\rho$ and internal-energy density $\varepsilon$. As we will see, these variables are more closely related to those solved for in fluid dynamics. We will refer to these $(\rho,\varepsilon)$-coordinates as density-energy coordinates. If we want to work with these coordinates, we should show that under the assumptions on our supercritical regime the change-of-coordinates map is a diffeomorphism. To this end, we define the following map between the coordinates on some admissible supercritical regime $\mathcal{S}_\eta$:
\begin{equation}
    \Phi : \mathcal{S}_\eta \rightarrow \mathbb{R}^2, \qquad (\nu,\sigma) \mapsto (\rho, \varepsilon) := \begin{bmatrix}
        \displaystyle\frac{1}{\nu} \\[0.3em] \displaystyle \frac{e(\nu,\sigma)}{\nu} 
    \end{bmatrix}.
    \label{eq:Phi}
\end{equation}
Because $\nu > 0$ on $\mathcal{T}$ and $e \in C^2(\mathcal{S}_\eta)$ by the regularity assumption in \autoref{dfn:admissiblesupercriticalregime}, this map satisfies $\Phi \in C^2(\mathcal{S}_\eta)$. Let us denote the image of the supercritical regime under $\Phi$ as $\mathcal{S}_{\tau} := \Phi(\mathcal{S}_{\eta})$ and denote a point in this supercritical regime, expressed in density-energy coordinates, by:
\begin{equation*}
    \bm{\tau} := (\rho,\varepsilon) \in \mathcal{S}_{\tau}.
\end{equation*}
As a map onto its image, $\Phi$ is then a $C^2$ diffeomorphism if it has a $C^2(\mathcal{S}_\tau)$ inverse $\Phi^{-1} : \mathcal{S}_{\tau} \rightarrow \mathcal{S}_{\eta}$ which we will show.
\begin{prop}[$(\rho,\varepsilon)$-diffeomorphism]
    \label{prop:densityenergydiffeomorphism}
    For any admissible supercritical regime $\mathcal{S}_{\eta}$, the change-of-coordinates map $\Phi : \mathcal{S}_{\eta} \rightarrow \mathbb{R}^2$ as in \eqref{eq:Phi} is a $C^2$ diffeomorphism onto its image.
\end{prop}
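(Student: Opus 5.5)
The argument has two parts: injectivity of $\Phi$ on $\mathcal{S}_\eta$, and invertibility of its Jacobian at every point. Once both hold, the inverse function theorem gives a local $C^2$ inverse around each point. Injectivity then makes these local inverses patch into a global inverse $\Phi^{-1}:\mathcal{S}_\tau\to\mathcal{S}_\eta$. That inverse is $C^2$ on $\mathcal{S}_\tau$, and $\mathcal{S}_\tau$ is open in $\mathbb{R}^2$ because a local diffeomorphism is an open map and $\mathcal{S}_\eta$ is open.

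\textbf{Injectivity.} Suppose $\Phi(\nu_1,\sigma_1)=\Phi(\nu_2,\sigma_2)$. Equality of the first components $1/\nu_1=1/\nu_2$ gives $\nu_1=\nu_2=:\nu$, since $\nu>0$ on $\mathcal{T}$. Equality of the second components then gives $e(\nu,\sigma_1)/\nu=e(\nu,\sigma_2)/\nu$, so $e(\nu,\sigma_1)=e(\nu,\sigma_2)$. The injectivity of $\sigma\mapsto e(\nu,\sigma)$ in the monotonicity condition of \autoref{dfn:admissiblesupercriticalregime} yields $\sigma_1=\sigma_2$. This is exactly why that assumption was placed in the definition. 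It is needed because the fibres $\{\sigma:(\nu,\sigma)\in\mathcal{S}_\eta\}$ need not be intervals, so the positivity $e_\sigma=T>0$ alone does not give global injectivity.

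\textbf{Jacobian.} Using \eqref{eq:pressuretemperature}, the derivative of $\Phi$ is
\begin{equation*}
D\Phi(\nu,\sigma)=\begin{bmatrix} -\nu^{-2} & 0\\ \dfrac{e_\nu\nu-e}{\nu^2} & \dfrac{e_\sigma}{\nu}\end{bmatrix},
\qquad \det D\Phi=-\frac{T(\nu,\sigma)}{\nu^{3}}.
\end{equation*}
This determinant is nonzero because $T>0$ by the monotonicity condition and $\nu>0$. The regularity condition gives $e\in C^2(\mathcal{S}_\eta)$, hence $\Phi\in C^2$. The $C^2$ inverse function theorem then provides, around each point, open neighbourhoods on which $\Phi$ is a $C^2$ diffeomorphism. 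Strict stability is not needed for this proposition.

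\textbf{Globalizing.} Combining the two parts, the global inverse agrees locally with these $C^2$ local inverses. It is therefore $C^2$ on the open set $\mathcal{S}_\tau$, which completes the proof. The only step needing care is injectivity, and the assumption built into the definition resolves it directly; the remaining steps are routine.
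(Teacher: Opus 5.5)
Your proof is correct and follows the paper's argument. The paper also uses the Jacobian determinant $-T/\nu^{3}<0$ and the inverse-function theorem to get a local $C^2$ diffeomorphism, then upgrades it to a diffeomorphism onto the image via injectivity from $1/\nu$ and the fixed-$\nu$ injectivity of $e$; your remarks on the openness of $\mathcal{S}_\tau$ and the patching of local inverses just spell out the standard fact the paper cites.
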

The proof can be found in \autoref{ssec:densityenergydiffeomorphism}. Since $\Phi$ is a diffeomorphism we will refer to any image of an admissible supercritical regime under $\Phi$ also as an admissible supercritical regime. The subscripts will thus denote the coordinates in which the supercritical regime is expressed, i.e.\ $\mathcal{S}_{\tau}$ is a supercritical regime expressed in density-energy coordinates, while $\mathcal{S}_{\eta}$ is expressed in terms of $(\nu,\sigma)$-coordinates.

\subsection{Pressure and isobars}
Since we are studying pressure equilibria, pressure will naturally play a substantial role in our coming analysis, in particular, when expressed in terms of density-energy coordinates. Therefore, by some abuse of notation, we will redefine pressure in terms of these coordinates as:
\begin{equation*}
    p(\bm{\tau}) := p(\Phi^{-1}(\bm{\tau})).
\end{equation*}
We can use the regularity of the diffeomorphism $\Phi$ that we have demonstrated and the strict stability of admissible supercritical regimes to show that $p(\bm{\tau})$ has some properties that will be useful later. 
\begin{prop}[Pressure regularity]
    \label{prop:pressureregularity}
    For any admissible supercritical regime $\mathcal{S}_{\tau}$ the pressure satisfies $p \in C^1(\mathcal{S}_{\tau})$ and its gradient is nonvanishing:
    \begin{equation*}
        \Vert\nabla p(\bm{\tau})\rVert > 0, \qquad \forall \bm{\tau} \in \mathcal{S}_{\tau}.
    \end{equation*}
\end{prop}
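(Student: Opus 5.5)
Regularity follows from the chain rule. By the regularity condition in \autoref{dfn:admissiblesupercriticalregime}, $p = -e_\nu \in C^1(\mathcal{S}_\eta)$. By \autoref{prop:densityenergydiffeomorphism}, $\Phi^{-1} \in C^2(\mathcal{S}_\tau)$. Hence $p\circ\Phi^{-1} \in C^1(\mathcal{S}_\tau)$.

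For the nonvanishing gradient, the chain rule gives
\begin{equation*}
\nabla_{\bm{\tau}} p = (D\Phi^{-1})^{\top}\nabla_{\bm{\eta}} p = (D\Phi(\bm{\eta}))^{-\top}\nabla_{\bm{\eta}} p .
\end{equation*}
The matrix $D\Phi$ is invertible because $\Phi$ is a diffeomorphism. It therefore suffices to show that $\nabla_{\bm{\eta}} p = -(e_{\nu\nu}, e_{\nu\sigma})$ never vanishes on $\mathcal{S}_\eta$. This is immediate from strict stability, since $e_{\nu\nu} > 0$. So the $\nu$-component of $\nabla_{\bm{\eta}} p$ is strictly negative, and $\nabla_{\bm{\eta}} p \neq 0$ at every point.

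As a sanity check, I would also compute the gradient explicitly. Writing $\nu = 1/\rho$ and $e = \varepsilon/\rho$, one gets $\partial\nu/\partial\rho = -\nu^2$ and $\partial\nu/\partial\varepsilon = 0$. Differentiating $e(\nu,\sigma(\rho,\varepsilon)) = \varepsilon\nu$ gives
\begin{equation*}
\sigma_\varepsilon = \frac{\nu}{T}, \qquad \sigma_\rho = \frac{-\varepsilon\nu^2 + p\nu^2}{T}\cdot(-1)\ \text{(up to sign bookkeeping)} .
\end{equation*}
Consequently $p_\varepsilon = -e_{\nu\sigma}\,\nu/T$, and $p_\rho$ is a combination involving $e_{\nu\nu}\nu^2$. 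Both components vanishing simultaneously would force $e_{\nu\sigma} = 0$ together with $e_{\nu\nu} = 0$, which contradicts stability. This requires $T > 0$ from the monotonicity condition.

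The only real obstacle is making sure the invertibility of $D\Phi$ is actually available. I would take it from the proof of \autoref{prop:densityenergydiffeomorphism}, where
\begin{equation*}
\det D\Phi = -\nu^{-2}\cdot\frac{T}{\nu} \neq 0
\end{equation*}
by positivity of temperature. Otherwise the argument is routine.
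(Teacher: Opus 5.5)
Your proof is correct and follows the paper's argument: $p=-e_\nu\circ\Phi^{-1}$ is $C^1$ because $e_\nu\in C^1(\mathcal{S}_\eta)$ and $\Phi^{-1}\in C^2(\mathcal{S}_\tau)$, and the gradient is nonzero because strict stability gives $e_{\nu\nu}>0$, so $\nabla_{\bm{\eta}}p\neq\bm{0}$, while $D\Phi$ is nonsingular (the paper writes the chain rule as $\nabla_{\bm{\eta}}\tilde{p}=D\Phi^{T}\nabla_{\bm{\tau}}p$ instead of inverting $D\Phi$, which is equivalent). Your optional explicit check has a sign slip: differentiating $e=\varepsilon\nu$ at fixed $\varepsilon$ gives $\sigma_\rho=-(\varepsilon+p)\nu^2/T$. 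This does not affect the main argument, and your conclusions there are right: $p_\varepsilon=-e_{\nu\sigma}\nu/T$, and $p_\rho=e_{\nu\nu}\nu^2>0$ whenever $e_{\nu\sigma}=0$.
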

The proof can be found in \autoref{ssec:pressureregularity}. 

Since the pressure gradient $\nabla p(\bm{\tau})$ in density-energy coordinates never vanishes and the pressure gradient is a vector in $\mathbb{R}^2$, intuitively there is always a unique direction in $\mathcal{S}_{\tau}$ orthogonal to $\nabla p(\bm{\tau})$, along which the pressure stays constant at some value $P \in \mathbb{R}_+$. Since $p\in C^1(\mathcal{S}_{\tau})$, this direction should depend continuously on points in $\mathcal{S}_{\tau}$ and therefore trace out one or multiple continuous curves collectively denoted as $\Gamma_P$ in $\mathcal{S}_{\tau}$ along which the pressure stays at a value $P$. Such a set $\Gamma_P$ is referred to as an isobar and we define it as:
\begin{equation*}
    \Gamma_P := \{ \bm{\tau} \in \mathcal{S}_{\tau}\, :\, p(\bm{\tau}) = P\}.
\end{equation*}
With the following proposition, we make the previous intuition more rigorous.
\begin{prop}[Isobar curves]
    \label{prop:isobarcurves}
    For any admissible supercritical regime $\mathcal{S}_{\tau}$ and any point on a nonempty isobar $\Gamma_P$ on $\mathcal{S}_{\tau}$ denoted by $\bm{\tau}_0 \in \Gamma_P$ there exist open intervals $I,J,K \subset \mathbb{R}$ with $J \times K \subset \mathcal{S}_{\tau}$ and a regular $C^1$ diffeomorphism defined as:
    \begin{equation*}
        \varphi : I \rightarrow \Gamma_P \cap (J \times K),
    \end{equation*}
    with $\varphi(t_0) = \bm{\tau}_0$ for some $t_0 \in I$.
\end{prop}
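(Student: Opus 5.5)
This is a direct application of the implicit function theorem to $p - P$, using \autoref{prop:pressureregularity}. By that proposition, $p \in C^1(\mathcal{S}_\tau)$ and $\nabla p(\bm{\tau}_0) \neq \bm{0}$. So at least one partial derivative, $p_\rho(\bm{\tau}_0)$ or $p_\varepsilon(\bm{\tau}_0)$, is nonzero. I would split into these two cases. They are symmetric under exchanging the roles of $\rho$ and $\varepsilon$, so I treat the case $p_\varepsilon(\bm{\tau}_0) \neq 0$ and note that the other follows by relabeling.

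Write $\bm{\tau}_0 = (\rho_0,\varepsilon_0)$ and define $F(\rho,\varepsilon) := p(\rho,\varepsilon) - P$. Then $F \in C^1(\mathcal{S}_\tau)$, $F(\bm{\tau}_0)=0$ and $F_\varepsilon(\bm{\tau}_0)\neq 0$. Since $\mathcal{S}_\tau$ is open (it is the image of an open set under the diffeomorphism of \autoref{prop:densityenergydiffeomorphism}), I can choose an open rectangle $J_1\times K_1 \subset \mathcal{S}_\tau$ containing $\bm{\tau}_0$.

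By continuity of $F_\varepsilon$, I shrink the rectangle so that $F_\varepsilon$ has constant sign on it. The implicit function theorem then gives open intervals $J \ni \rho_0$ and $K \ni \varepsilon_0$, with $J\times K\subset J_1\times K_1$, and a $C^1$ function $g: J \to K$ with $g(\rho_0)=\varepsilon_0$. It also gives the characterization
\begin{equation*}
    \{(\rho,\varepsilon)\in J\times K : F(\rho,\varepsilon)=0\} = \{(\rho,g(\rho)) : \rho\in J\}.
\end{equation*}
To be careful here, I use the strict monotonicity of $\varepsilon\mapsto F(\rho,\varepsilon)$ on each vertical segment of the rectangle. This guarantees that the zero in $K$ is unique, so the graph exhausts $\Gamma_P\cap(J\times K)$ and not just a neighborhood of $\bm{\tau}_0$.

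Now set $I := J$, $t_0 := \rho_0$ and $\varphi(t) := (t, g(t))$. This map is $C^1$, and it is regular because $\varphi'(t) = (1, g'(t)) \neq \bm{0}$. It is a bijection onto $\Gamma_P\cap(J\times K)$, and its inverse is the restriction of the projection $(\rho,\varepsilon)\mapsto\rho$. That inverse is the restriction of a smooth map and hence continuous (and $C^1$ in the sense of submanifolds), so $\varphi$ is a $C^1$ diffeomorphism onto its image, which is a one-dimensional embedded $C^1$ submanifold. In the other case, $p_\rho(\bm{\tau}_0)\neq 0$, I instead parametrize by $\varepsilon$ with $\varphi(t) = (h(t),t)$.

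The main point requiring care is the equality of the isobar piece inside $J\times K$ with the graph, as opposed to mere local existence of the curve. I handle it by choosing the rectangle small enough that the relevant partial derivative has a fixed sign, so that uniqueness of the implicit solution holds on each full line segment. A secondary point is to interpret ``diffeomorphism onto $\Gamma_P \cap (J\times K)$'' via the projection inverse.
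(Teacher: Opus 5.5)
Your proposal is correct and follows essentially the same route as the paper. Both split into cases by which partial derivative of $p$ is nonzero (using the pressure-regularity proposition) and apply the implicit-function theorem to $F = p - P$. Both then take the graph parameterization $\varphi(\rho) = (\rho,\psi(\rho))$ with $I = J$ and use the projection onto $\rho$ as the $C^1$ inverse. Your sign-of-$F_\varepsilon$ and monotonicity argument is only a more explicit version of the uniqueness clause that the paper takes directly from the implicit-function theorem to get $\varphi(I) = \Gamma_P \cap (J \times K)$.
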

The proof is found in \autoref{ssec:isobarcurves} and is essentially a direct application of the implicit-function theorem.

Tangent lines to isobars will be a key tool in our analysis. These are one-dimensional affine subspaces in the direction of tangent vectors at some point on an isobar which we construct according to the following. Since \autoref{prop:isobarcurves} establishes the existence of local regular $C^1$ parameterizations of $\Gamma_P$, we know that there exist nonvanishing tangent vectors everywhere on $\Gamma_P$. In particular, we can find a tangent vector at $\bm{\tau} \in \Gamma_P$ by taking $\varphi'(t)$ for a regular local parameterization $\varphi$ satisfying $\varphi(t) =\bm{\tau}$. Consequently, we define the set of all tangent vectors at some point $\bm{\tau} \in \Gamma_P$, the so-called tangent space at the point $\bm{\tau}$ denoted as $T_{\bm{\tau}}\Gamma_P \subset \mathbb{R}^2$, as the span of the derivative of such a parameterization:
\begin{equation*}
    T_{\bm{\tau}}\Gamma_P := \operatorname{span}(\varphi'(t)),
\end{equation*}
where $t \in I$ for some open interval $I\subset \mathbb{R}$ and $\varphi(t) = \bm{\tau}$. This span is independent of the chosen regular local parameterization. As the span of some nonzero vector, the tangent space $T_{\bm{\tau}}\Gamma_P$ is just a one-dimensional vector subspace of $\mathbb{R}^2$. We can obtain a tangent line at a point $\bm{\tau} \in \Gamma_P$, denoted as $L_{\bm{\tau}}\Gamma_P \subset \mathbb{R}^2$, by constructing an affine space using the tangent space $T_{\bm{\tau}}\Gamma_P$. We define this as follows.
\begin{dfn}[Tangent lines]
    For any $P \in \mathbb{R}_+$ so that the isobar $\Gamma_P$ on some admissible supercritical regime $\mathcal{S}_{\tau}$ is nonempty, a \emph{tangent line} at some point $\bm{\tau}\in\Gamma_P$ is denoted as $L_{\bm{\tau}}\Gamma_P$ and defined as:
    \begin{equation*}
        L_{\bm{\tau}} \Gamma_P := \bm{\tau} + T_{\bm{\tau}}\Gamma_P.
    \end{equation*}
\end{dfn}
Here, a vector added to a set should be understood elementwise. Note that $L_{\bm{\tau}} \Gamma_P$ is no longer necessarily a vector subspace, since $0 \in L_{\bm{\tau}} \Gamma_P$ may not hold.

Indeed, according to our intuition the following result can now be shown rigorously.
\begin{lem}[Tangent-space orthogonality]
    \label{lem:tangentspaceorthogonality}
    For any $P \in \mathbb{R}_+$ so that the isobar $\Gamma_P$ on some admissible supercritical regime $\mathcal{S}_{\tau}$ is nonempty, the tangent space $T_{\bm{\tau}}\Gamma_P$ at a point $\bm{\tau} \in \Gamma_P$ satisfies:
    \begin{equation}
        T_{\bm{\tau}} \Gamma_P = \operatorname{span}\left(\begin{bmatrix}
            -p_{\varepsilon}(\bm{\tau}) \\[0.3em] p_{\rho}(\bm{\tau})
        \end{bmatrix} \right) = \ker\left(\left<\nabla p(\bm{\tau}),\cdot \right>\right),
        \label{eq:tangentspaceidentity}
    \end{equation}
    that is, tangent vectors of an isobar are orthogonal to the pressure gradient.
\end{lem}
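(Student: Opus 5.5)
We take a regular local parameterization $\varphi : I \rightarrow \Gamma_P \cap (J\times K)$ from \autoref{prop:isobarcurves} with $\varphi(t) = \bm{\tau}$. Since $\varphi$ maps into $\Gamma_P$, the composition $p\circ\varphi$ is identically equal to $P$ on $I$. By \autoref{prop:pressureregularity}, $p \in C^1(\mathcal{S}_{\tau})$, and $\varphi$ is $C^1$, so the chain rule applies. Differentiating gives
\begin{equation*}
    0 = \frac{d}{dt}\, p(\varphi(t)) = \left<\nabla p(\varphi(t)), \varphi'(t)\right>.
\end{equation*}
Hence $\varphi'(t) \in \ker(\left<\nabla p(\bm{\tau}),\cdot\right>)$, and therefore $T_{\bm{\tau}}\Gamma_P = \operatorname{span}(\varphi'(t)) \subset \ker(\left<\nabla p(\bm{\tau}),\cdot\right>)$.

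Next we compare dimensions. By \autoref{prop:pressureregularity}, $\nabla p(\bm{\tau}) \neq 0$, so the linear functional $\left<\nabla p(\bm{\tau}),\cdot\right>$ on $\mathbb{R}^2$ has rank one and its kernel is one-dimensional. Because $\varphi$ is regular, $\varphi'(t) \neq 0$, so $T_{\bm{\tau}}\Gamma_P$ is also one-dimensional. A one-dimensional subspace contained in another one-dimensional subspace must equal it, which yields the second equality in \eqref{eq:tangentspaceidentity}.

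For the first equality, note that the vector $(-p_\varepsilon(\bm{\tau}), p_\rho(\bm{\tau}))$ satisfies $\left<\nabla p(\bm{\tau}), (-p_\varepsilon, p_\rho)\right> = -p_\rho p_\varepsilon + p_\varepsilon p_\rho = 0$, so it lies in the kernel. Its norm equals $\lVert\nabla p(\bm{\tau})\rVert > 0$, so it is nonzero. A nonzero vector in a one-dimensional kernel spans that kernel, which gives the first equality.

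The only point needing care is that the tangent space is well defined independently of the chosen parameterization, which the paper asserts. The argument above in fact confirms this, since the kernel does not depend on $\varphi$. Beyond that, the proof is just the chain rule plus a dimension count, so I expect no real obstacle.
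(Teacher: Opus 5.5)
Your proof is correct, and it reaches the result by a somewhat different route. The paper works with the specific graph parameterization $\varphi(\rho) = (\rho,\psi(\rho))$ produced by the implicit-function theorem in the proof of \autoref{prop:isobarcurves}. It splits into the cases $p_\varepsilon(\bm{\tau}_0)\neq 0$ and $p_\rho(\bm{\tau}_0)\neq 0$, computes $\psi' = -p_\rho/p_\varepsilon$ by implicit differentiation, and rescales to obtain $(-p_\varepsilon,p_\rho)$. It then proves both inclusions between the span and the kernel by explicit linear algebra.

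You instead differentiate $p\circ\varphi\equiv P$ for an arbitrary regular $C^1$ parameterization and finish with a rank--nullity dimension count. This removes the case split. More importantly, it shows that $\operatorname{span}(\varphi'(t))$ equals the kernel for \emph{every} admissible $\varphi$. So your argument actually proves that $T_{\bm{\tau}}\Gamma_P$ is independent of the parameterization. The paper only asserts this, and tacitly relies on it, since its computation uses one particular $\varphi$.

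In exchange, the paper's route produces the explicit slope formula $\psi'(\rho) = -p_\rho/p_\varepsilon$ along the isobar. This is exactly the expression \eqref{eq:energyderivative} for $\varepsilon_\rho$ that is reused later in the comparison with Chan et al.
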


The proof is given in \autoref{ssec:tangentspaceorthogonality}.

\begin{rmk}
    The results above are essentially all captured in the so-called regular-level-set theorem \cite{tuintroduction}. However, this requires the notion of differentiable embedded submanifolds and somewhat cumbersome arguments to embed tangent spaces into the ambient space of a manifold to define tangent lines. Hence, we have chosen to derive the results in this section using only elementary calculus. 
\end{rmk}

\begin{rmk}
    An impression of the general setting of our analysis is given in \autoref{fig:overview}.
\end{rmk}

\begin{figure}
    \centering
    \includegraphics[width=0.6\linewidth]{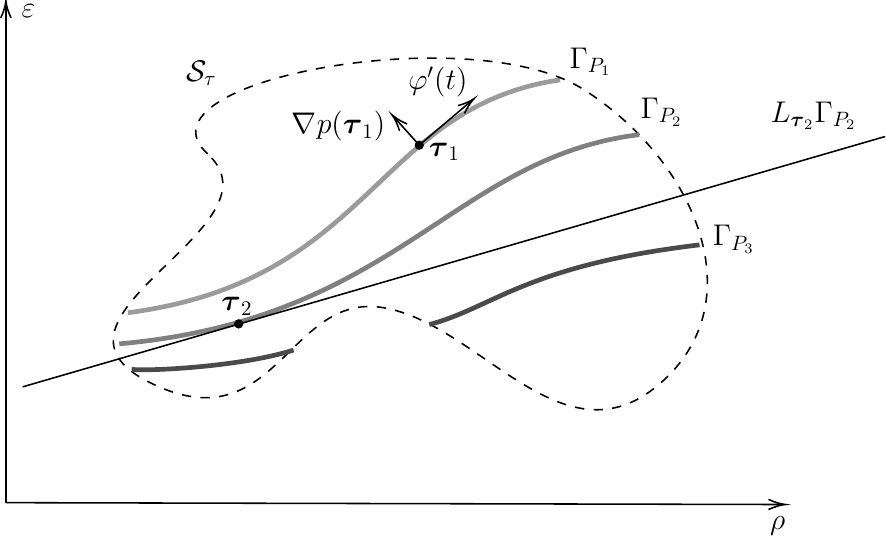}
    \caption{An impression of an admissible supercritical regime $\mathcal{S}_{\tau}$ in $(\rho,\varepsilon)$-space along with three isobars $\Gamma_{P_1}$, $\Gamma_{P_2}$ and $\Gamma_{P_3}$ indicated by different shades of gray. At some point $\bm{\tau}_1 \in \Gamma_{P_1}$, the parameterization derivative $\varphi'(t)$ and the pressure gradient $\nabla p(\bm{\tau}_1)$ are sketched and are orthogonal. At some point $\bm{\tau}_2 \in \Gamma_{P_2}$, the tangent line $L_{\bm{\tau}_2}\Gamma_{P_2}$ is sketched. The isobar $\Gamma_{P_3}$ consists of multiple connected components.}
    \label{fig:overview}
\end{figure}

\section{Pressure-equilibrium-preserving schemes}\label{sec:PEP}
\subsection{The Euler equations}
The dynamics of supercritical fluids can be modeled using the Euler equations. These are a set of partial differential equations (PDEs) in conservation-law form that conserve the following variables:
\begin{equation}
    \bm{U} := (\rho,\, m,\, E).
    \label{eq:conservativevariables}
\end{equation}
Here, $\rho$ is the mass density and is physically the same quantity as the mass density in $(\rho,\varepsilon)$-coordinates for thermodynamics, $m$ is the momentum density and $E$ is the total-energy density. The variables $\bm{U}$ are referred to as conservative variables. In one spatial dimension, the Euler equations are then stated as:
\begin{equation}
    \pupt{\bm{U}} + \pupx{\bm{f}(\bm{U})} = 0, \qquad \bm{f}(\bm{U}) = \begin{bmatrix}
        \rho v \\ \rho v^2 + p \\ v(E + p)
    \end{bmatrix},
    \label{eq:Euler}
\end{equation}
where $v$ is the fluid velocity and $p$ is the thermodynamic pressure as discussed in detail in the previous section. The velocity $v$ and pressure $p$ can be obtained from the conservative variables $\bm{U}$ using the following definitions. Velocity is simply the ratio of momentum to mass density, while the total-energy density is defined as the sum of the internal-energy density $\varepsilon$ and the kinetic-energy density $\tfrac12 \rho v^2$, thus:
\begin{equation}
    v(\bm{U}) = \frac{m}{\rho}, \qquad \varepsilon(\bm{U}) = E - \frac12 \frac{m^2}{\rho}.
    \label{eq:vpdefinition}
\end{equation}
The pressure can then be computed as $p(\rho,\varepsilon(\bm{U}))$.

\subsection{Supercritical conservative variables}
To make use of the previous results for pressure, we have to define the set of all $\bm{U}$ such that $\bm{\tau}(\bm{U}):=(\rho,\varepsilon(\bm{U})) \in \mathcal{S}_{\tau}$. Note that a point $\bm{\tau} = (\rho,\varepsilon) \in \mathcal{S}_{\tau}$ determines the thermodynamic part of a fluid state, but not its momentum or total energy, since these additionally depend on the fluid velocity. To represent this additional part, we augment $\bm{\tau}$ with an arbitrary velocity $v\in \mathbb{R}$. If we can now show that there exists a map $(\bm{\tau},v) \mapsto \bm{U}$ that is a diffeomorphism onto its image we recover all $\bm{U}$ such that $\bm{\tau}(\bm{U}) \in \mathcal{S}_{\tau}$. Moreover, if this is the case, the image is an open set. Specifically, we define the map as follows:
\begin{equation}
    \Theta : \mathcal{S}_{\tau} \times \mathbb{R} \rightarrow \mathbb{R}^3, \qquad (\bm{\tau}, v) \mapsto \begin{bmatrix}
        \rho \\ \rho v \\ \varepsilon + \frac12 \rho v^2
    \end{bmatrix}, \qquad \bm{\tau} = (\rho, \varepsilon).
    \label{eq:conservativediffeomorphism}
\end{equation}
We have $\Theta \in C^\infty(\mathcal{S}_{\tau}\times \mathbb{R})$. Now define the set of all conservative variables associated to an admissible supercritical regime as $\mathcal{S}_{U} := \Theta(\mathcal{S}_{\tau} \times \mathbb{R})$. It remains to show that this map has a smooth inverse.
\begin{prop}[Supercritical conservative variables]
    \label{prop:supercriticalconservativevariables}
    For any admissible supercritical regime $\mathcal{S}_{\tau}$, the map $\Theta : \mathcal{S}_{\tau} \times \mathbb{R} \rightarrow \mathcal{S}_U$ is a smooth diffeomorphism and the image $\mathcal{S}_U$ satisfies:
    \begin{equation*}
        \mathcal{S}_U = \{(\rho,m,E)\in \mathbb{R}^3\, :\, \rho > 0,\, \bm{\tau}(\rho,m,E) \in \mathcal{S}_{\tau} \}.
    \end{equation*}
\end{prop}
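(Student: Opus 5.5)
The plan is to write down an explicit inverse of $\Theta$ and check that it is smooth; the description of $\mathcal{S}_U$ then comes out of the same computation. Define
\begin{equation*}
    \Psi : \{(\rho,m,E)\in\mathbb{R}^3 : \rho > 0\} \rightarrow \mathbb{R}^3, \qquad (\rho,m,E) \mapsto \left(\rho,\; E - \frac12\frac{m^2}{\rho},\; \frac{m}{\rho}\right) = \bigl(\bm{\tau}(\bm{U}), v(\bm{U})\bigr),
\end{equation*}
using \eqref{eq:vpdefinition}. Every component is a rational function whose denominator is a power of $\rho$, so $\Psi \in C^\infty$ on the open half-space $\rho>0$.

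\textbf{Step 1: $\Psi$ is a left inverse of $\Theta$.} Every $\bm{\tau}\in\mathcal{S}_\tau$ has $\rho = 1/\nu > 0$, since $\mathcal{S}_\tau = \Phi(\mathcal{S}_\eta)$ with $\nu>0$ on $\mathcal{T}$. So $\Theta(\mathcal{S}_\tau\times\mathbb{R})$ lies in the domain of $\Psi$. Substituting directly gives
\begin{equation*}
    \Psi(\Theta(\bm{\tau},v)) = \left(\rho,\; \varepsilon + \tfrac12\rho v^2 - \tfrac12\rho v^2,\; v\right) = (\bm{\tau},v).
\end{equation*}
Hence $\Theta$ is injective.

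\textbf{Step 2: identify the image.} Let $\mathcal{A} := \{(\rho,m,E) : \rho>0,\ \bm{\tau}(\rho,m,E)\in\mathcal{S}_\tau\}$.
\begin{itemize}
    \item $\mathcal{S}_U \subset \mathcal{A}$: for $\bm{U} = \Theta(\bm{\tau},v)$ we have $\rho>0$, and Step 1 gives $\bm{\tau}(\bm{U}) = \bm{\tau} \in \mathcal{S}_\tau$.
    \item $\mathcal{A} \subset \mathcal{S}_U$: for $\bm{U}\in\mathcal{A}$, the point $\Psi(\bm{U}) = (\bm{\tau}(\bm{U}), m/\rho)$ lies in $\mathcal{S}_\tau\times\mathbb{R}$. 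A direct computation gives $\Theta(\Psi(\bm{U})) = \bigl(\rho,\ m,\ E - \tfrac12 m^2/\rho + \tfrac12 m^2/\rho\bigr) = \bm{U}$, so $\bm{U}\in\mathcal{S}_U$.
\end{itemize}
Therefore $\mathcal{S}_U = \mathcal{A}$, and $\Psi|_{\mathcal{S}_U}$ is a two-sided inverse of $\Theta$.

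\textbf{Step 3: openness and smoothness.} $\mathcal{S}_U$ should be open so that a smooth inverse makes sense on an open set. Here $\mathcal{S}_\tau = \Phi(\mathcal{S}_\eta)$ is open, because by \autoref{prop:densityenergydiffeomorphism} $\Phi$ is a diffeomorphism onto its image: by the inverse function theorem it is a local diffeomorphism, hence an open map, and $\mathcal{S}_\eta$ is open. The map $\bm{U}\mapsto\bm{\tau}(\bm{U})$ is continuous on $\{\rho>0\}$, so $\mathcal{A}$ is the preimage of an open set intersected with an open half-space, and is therefore open. On this open set $\Psi$ is $C^\infty$. Together with $\Theta\in C^\infty$, this shows $\Theta$ is a smooth diffeomorphism.

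The only step that needs care is the openness of $\mathcal{S}_\tau$, which comes from \autoref{prop:densityenergydiffeomorphism} via the open-mapping property. Everything else is explicit algebra.
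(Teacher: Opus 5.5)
Your proposal is correct and follows essentially the same route as the paper: exhibit the explicit inverse $\bm{U}\mapsto(\bm{\tau}(\bm{U}),v(\bm{U}))$, which is smooth on $\rho>0$, and prove the set equality by two inclusions using $\Theta\circ\Theta^{-1}=\mathrm{id}$ and $\Theta^{-1}\circ\Theta=\mathrm{id}$. Your additional argument that $\mathcal{S}_\tau$, and hence $\mathcal{S}_U$, is open, via the local-diffeomorphism property behind \autoref{prop:densityenergydiffeomorphism}, supplies a detail the paper leaves implicit.
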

The proof is given in \autoref{ssec:supercriticalconservativevariables}. Note that the variable transformation $\Theta^{-1}(\bm{U}) = (\bm{\tau}(\bm{U}),v(\bm{U}))$ conveniently splits the conservative variables into thermodynamic variables given by $\bm{\tau}$ and the kinematic variable $v$.

\subsection{Pressure-velocity equilibrium}
The numerical schemes we are interested in analyzing discretely preserve certain equilibrium solutions of the Euler equations \eqref{eq:Euler}. Specifically, they preserve pressure and velocity equilibria. What is meant by these equilibrium solutions is that under certain conditions on $\bm{U}(x,t)$, the spatial profiles of pressure $p(\bm{\tau}(\bm{U}(x,t)))$ and velocity $v(\bm{U}(x,t))$ remain unchanged in time, that is:
\begin{equation}
    \pupt{p(\bm{\tau}(\bm{U}))} = 0, \qquad \pupt{v(\bm{U})} = 0.
    \label{eq:pvdt}
\end{equation}
For the Euler equations this happens if the conservative variables $\bm{U}(x,t)$ are such that, at some time $t$, the spatial profiles of pressure and velocity satisfy $p(\bm{\tau}(\bm{U}(x,t))) = P$ and $v(\bm{U}(x,t)) = V$ for any constants $P \in \mathbb{R}_+$ and $V \in \mathbb{R}$. One way to see this is the following. For any $P \in \mathbb{R}_+$ and $V \in \mathbb{R}$ define the $(P,V)$-equilibrium set:
\begin{equation*}
    \mathcal{M}_{P,V} := \{\bm{U} \in \mathcal{S}_U\,:\, p(\bm{\tau}(\bm{U})) = P,\, v(\bm{U}) = V\}.
\end{equation*}
If $\bm{U} \in \mathcal{M}_{P,V}$ the Euler flux in \eqref{eq:Euler} reduces to:
\begin{equation*}
    \bm{f}(\bm{U}) = \begin{bmatrix}
        \rho V \\ \rho V^2 + P \\ V(E + P)
    \end{bmatrix}.
\end{equation*}
Assuming then that for all $x$ it holds that $\bm{U}(x,t) \in \mathcal{M}_{P,V}$ at some fixed time $t$, substitution into the Euler equations and using \eqref{eq:vpdefinition} so that $m = \rho V$ gives:
\begin{equation*}
    \pupt{\bm{U}} + V \pupx{\bm{U}} = 0.
\end{equation*}
The solution profiles, including the pressure and velocity, are therefore advected at velocity $V$. However, the pressure and velocity profiles are constant in space, so their values remain constant in time. Another way to show that pressure and velocity equilibria hold is to derive balance laws for pressure and velocity as done in \cite{bernadeskinetic, coppolapressure, channodal}. For these balance laws all spatial terms cancel when $\bm{U}(x,t) \in \mathcal{M}_{P,V}$ leaving \eqref{eq:pvdt}. The $(P,V)$-equilibrium set has an alternative more geometric representation.
\begin{prop}[Equilibrium set]
    \label{prop:equilibriumset}
    For some $P\in \mathbb{R}_+$ and $V \in \mathbb{R}$ so that the isobar $\Gamma_P$ on an admissible supercritical regime $\mathcal{S}_{\tau}$ is nonempty, the $(P,V)$-equilibrium set satisfies:
    \begin{equation*}
        \mathcal{M}_{P,V} = \{ \Theta(\bm{\tau},V)\, :\, \bm{\tau}\in \Gamma_P\}.
    \end{equation*}
\end{prop}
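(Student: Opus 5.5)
The plan is to prove the set equality by double inclusion, using the diffeomorphism $\Theta$ from \autoref{prop:supercriticalconservativevariables} and its inverse $\Theta^{-1}(\bm{U}) = (\bm{\tau}(\bm{U}), v(\bm{U}))$.

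For the inclusion ``$\subseteq$'', take $\bm{U} \in \mathcal{M}_{P,V}$. By definition $\bm{U} \in \mathcal{S}_U$, so \autoref{prop:supercriticalconservativevariables} gives a unique pair $(\bm{\tau}, v) \in \mathcal{S}_{\tau}\times\mathbb{R}$ with $\bm{U} = \Theta(\bm{\tau},v)$. Explicitly, $\bm{\tau} = \bm{\tau}(\bm{U}) = (\rho, E - \tfrac12 m^2/\rho)$ and $v = v(\bm{U}) = m/\rho$. The equilibrium conditions then say $v(\bm{U}) = V$ and $p(\bm{\tau}(\bm{U})) = P$. Hence $v = V$, and $\bm{\tau}\in\mathcal{S}_{\tau}$ with $p(\bm{\tau}) = P$, i.e.\ $\bm{\tau}\in\Gamma_P$. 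Therefore $\bm{U} = \Theta(\bm{\tau},V)$ with $\bm{\tau}\in\Gamma_P$.

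For the inclusion ``$\supseteq$'', take $\bm{\tau} = (\rho,\varepsilon)\in\Gamma_P\subset\mathcal{S}_{\tau}$ and set $\bm{U} := \Theta(\bm{\tau},V) = (\rho, \rho V, \varepsilon + \tfrac12\rho V^2)$. Then $\bm{U}\in\mathcal{S}_U$ by the definition of $\mathcal{S}_U$ as the image of $\Theta$. Next I compute directly using \eqref{eq:vpdefinition}. Since $\rho>0$ on $\mathcal{S}_{\tau}$, we get $v(\bm{U}) = \rho V/\rho = V$. Likewise $\varepsilon(\bm{U}) = \varepsilon + \tfrac12\rho V^2 - \tfrac12 (\rho V)^2/\rho = \varepsilon$, so $\bm{\tau}(\bm{U}) = \bm{\tau}$ and $p(\bm{\tau}(\bm{U})) = p(\bm{\tau}) = P$. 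Hence $\bm{U}\in\mathcal{M}_{P,V}$.

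There is no real obstacle here. The only point needing care is that $\rho>0$ on $\mathcal{S}_{\tau}$, which holds because $\rho = 1/\nu$ with $\nu>0$. This guarantees that $v(\bm{U})$ and $\varepsilon(\bm{U})$ are well defined, so that $\Theta^{-1}$ really is $(\bm{\tau}(\cdot),v(\cdot))$ as recorded after \autoref{prop:supercriticalconservativevariables}. The nonemptiness of $\Gamma_P$ is not used; it only ensures both sides are nonempty.
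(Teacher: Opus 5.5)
Your proof is correct and follows the paper's argument. It is a double inclusion: the forward direction reads off $\bm{\tau}(\bm{U})\in\Gamma_P$ and $v(\bm{U})=V$ from $\Theta^{-1}(\bm{U})=(\bm{\tau}(\bm{U}),v(\bm{U}))$, and the reverse direction checks that $\Theta(\bm{\tau},V)$ lies in $\mathcal{S}_U$ with the right pressure and velocity. The only cosmetic difference is that you verify $\bm{\tau}(\bm{U})=\bm{\tau}$ and $v(\bm{U})=V$ by explicit computation, whereas the paper simply invokes $\Theta^{-1}\circ\Theta=\mathrm{id}$.
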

The proof can be found in \autoref{ssec:equilibriumset}. 

\subsection{PEP schemes and algebraic PEP}\label{ssec:algebraicpep}
We are concerned with conservative numerical schemes to solve the
Euler equations \eqref{eq:Euler} on a grid with grid-cell size $h>0$.
In particular, we are interested in the spatial discretization of
\eqref{eq:Euler} and will not consider effects of time discretization. We consider such a spatial discretization to be a \emph{numerical scheme} if for any admissible initial grid state, the resulting system of ordinary differential equations (ODEs) admits a local classical solution\footnote{A continuously
differentiable solution satisfying the ODEs pointwise.}. We assume these schemes are constructed from 
two-point numerical-flux functions
$\bm{f}_h:\mathcal{S}_U\times\mathcal{S}_U\rightarrow\mathbb{R}^3$,
where $\bm{f}_h$ is consistent with the Euler flux in the following
sense.

\begin{dfn}[Consistency]
    \label{dfn:consistency}
    On an admissible supercritical regime $\mathcal{S}_U$, a numerical-flux 
    function
    $\bm{f}_h:\mathcal{S}_U\times\mathcal{S}_U\rightarrow\mathbb{R}^3$
    is \emph{consistent} with the Euler flux if:
    \begin{equation*}
        \bm{f}_h(\bm{U},\bm{U})=\bm{f}(\bm{U}),
        \qquad \forall \bm{U}\in\mathcal{S}_U.
    \end{equation*}
\end{dfn}

Note furthermore that we have assumed throughout that the spatial
domain is one-dimensional. We made
this choice because extensions to multiple dimensions are mostly trivial and only serve to introduce extra
notation.  Finally,
we take the simplest form of a conservative scheme, using a
difference of numerical-flux functions to discretize the flux
divergence in an interior grid cell:
\begin{equation}
    h \dudt{\bm{U}_C}
    +\bm{f}_h(\bm{U}_R,\bm{U}_C)
    -\bm{f}_h(\bm{U}_C,\bm{U}_L)=0,
    \label{eq:discretization}
\end{equation}
where $\bm{U}_C\in\mathcal{S}_U$ is the grid value in the cell, with
left and right neighboring grid values
$\bm{U}_L,\bm{U}_R\in\mathcal{S}_U$, respectively.
A scheme of the form \eqref{eq:discretization} is said to be
pressure-equilibrium-preserving (PEP) if pressure and velocity rates vanish in some cell when locally the grid values are in the same equilibrium set. 

\begin{dfn}[Pressure-equilibrium preservation]
    \label{dfn:pressureequilibriumpreservation}
    A scheme of the form \eqref{eq:discretization} with a consistent numerical flux $\bm{f}_h : \mathcal{S}_U \times \mathcal{S}_U \rightarrow \mathbb{R}^3$, where $\mathcal{S}_U$ is an admissible supercritical regime, is said to be pressure-equilibrium-preserving (PEP) if for any admissible initial conditions and time $t$ for which the scheme is defined and any $P \in \mathbb{R}_+$ and $V \in \mathbb{R}$ the following implication holds:
    \begin{equation*}
        \bm{U}_R(t),\bm{U}_C(t),\bm{U}_L(t) \in \mathcal{M}_{P,V} \qquad \implies \qquad \dudt{p(\bm{\tau}(\bm{U}_C))} = 0, \quad \dudt{v(\bm{U}_C)} = 0.
    \end{equation*}
\end{dfn}
Characterizing the existence of PEP schemes in this sense would require us to address both the conditions imposed by equilibrium preservation and the existence and behavior of solutions of the semi-discrete system. Our objective in this work, however, is to isolate the restrictions on the EOS imposed only by the equilibrium conditions themselves. We therefore first examine the necessary conditions that PEP imposes directly on the numerical flux. Assume that a PEP scheme exists and that, at some time $t$, $\bm{U}_R(t),\bm{U}_C(t),\bm{U}_L(t) \in \mathcal{M}_{P,V}$ and \eqref{eq:discretization} holds. The chain rule and \eqref{eq:discretization} then give:
\begin{align*}
    0=\dudt{v(\bm{U}_C)}
    &=\nabla v(\bm{U}_C)^T\dudt{\bm{U}_C}\\
    &=-\frac1h\nabla v(\bm{U}_C)^T
    \left[
        \bm{f}_h(\bm{U}_R,\bm{U}_C)
        -\bm{f}_h(\bm{U}_C,\bm{U}_L)
    \right],
\end{align*}
and:
\begin{align*}
    0=\dudt{p(\bm{\tau}(\bm{U}_C))}
    &=\nabla p(\bm{\tau}(\bm{U}_C))^T
      D\bm{\tau}(\bm{U}_C)\dudt{\bm{U}_C}\\
    &=-\frac1h\nabla p(\bm{\tau}(\bm{U}_C))^T
      D\bm{\tau}(\bm{U}_C)
    \left[
        \bm{f}_h(\bm{U}_R,\bm{U}_C)
        -\bm{f}_h(\bm{U}_C,\bm{U}_L)
    \right].
\end{align*}
Since the initial equilibrium configuration can be chosen
arbitrarily, the resulting flux identities must hold for every
triple $\bm{U}_R,\bm{U}_C,\bm{U}_L \in \mathcal{M}_{P,V}$. Note that the final identities involve only
values of the numerical flux, and can therefore be formulated
without assuming either its regularity or the existence of a
time-dependent solution and a numerical scheme. For some two-point function $\bm{a} : \mathcal{S}_U \times \mathcal{S}_U \rightarrow \mathbb{R}^k$ with $k \in \mathbb{N}$, denote $\Delta \bm{a}(\bm{U}_R,\bm{U}_C, \bm{U}_L) := \bm{a}(\bm{U}_R,\bm{U}_C)- \bm{a}(\bm{U}_C,\bm{U}_L)$. This then motivates the following definition.

\begin{dfn}[Algebraic pressure-equilibrium preservation]
    \label{dfn:algebraicpressureequilibriumpreservation}
    A consistent numerical flux $\bm{f}_h : \mathcal{S}_U \times \mathcal{S}_U 
    \rightarrow \mathbb{R}^3$, where $\mathcal{S}_U$ is an admissible supercritical regime, is said to be \emph{algebraically pressure-equilibrium-preserving} if, for every $P\in\mathbb{R}_+$ and $V\in\mathbb{R}$:
    \begin{equation*}
        \bm{U}_R,\bm{U}_C,\bm{U}_L\in\mathcal{M}_{P,V},
    \end{equation*}
    implies:
    \begin{equation}
        \nabla v(\bm{U}_C)^T \Delta \bm{f}_h(\bm{U}_R,\bm{U}_C,\bm{U}_L)=0, \qquad \nabla p(\bm{\tau}(\bm{U}_C))^T D\bm{\tau}(\bm{U}_C) \Delta \bm{f}_h(\bm{U}_R,\bm{U}_C,\bm{U}_L)=0.
        \label{eq:apepconditions}
    \end{equation}
\end{dfn}
The numerical flux of every PEP scheme therefore satisfies
the algebraic PEP conditions. Conversely, if a consistent,
algebraically PEP flux defines a spatial discretization admitting
local classical solutions for arbitrary admissible initial grid
states, the resulting scheme is PEP by the chain rule.
Thus, passing from algebraic flux existence to scheme existence
requires an additional solution-existence argument.
Sufficient regularity assumptions on the flux could be used
to establish such existence, but would introduce constraints
on its behavior both on pairs of points in the same $(P,V)$-equilibrium set and on pairs in different equilibrium sets.
This could potentially impose additional restrictions beyond algebraic
PEP compatibility. To focus exclusively on the algebraic
conditions, we make no regularity assumptions on $\bm f_h$.
Our analysis will characterize the existence of consistent,
algebraically PEP fluxes through geometric properties of the EOS.
An obstruction to algebraic existence also rules out PEP schemes
of the considered form, whereas algebraic existence alone does
not establish the existence of a numerical scheme.
\section{Existence characterization of algebraic PEP fluxes}\label{sec:characterization}
\subsection{Necessary conditions}
We can now begin our main analysis. We will first derive necessary geometric conditions on the EOS for the existence of a consistent algebraic PEP flux. That is, we will derive certain properties related to the geometry of a general EOS that must hold if a numerical flux is algebraically PEP given that EOS. This will be carried out in two main steps. First, we will find a relation between the mass flux $f_h^\rho$ and momentum flux $f_h^m$ that must hold between states in the same $(P,V)$-equilibrium set $\mathcal{M}_{P,V}$ if a numerical flux satisfies the algebraic velocity condition in \eqref{eq:apepconditions}. Second, using this flux relation, consistency and the fact that the pressure condition in \eqref{eq:apepconditions} is also satisfied, we will formulate the final necessary condition. In our analysis, we will make use of the fact that the algebraic PEP property must hold for \emph{any} triple $\bm{U}_R, \bm{U}_C, \bm{U}_L \in \mathcal{M}_{P,V}$ in some $(P,V)$-equilibrium set. Using this fact and consistency, we can simplify the three-point algebraic PEP conditions based on the three-point spatial discretization \eqref{eq:discretization} to conditions including only a single arbitrary pair of points in $\mathcal{M}_{P,V}$ by making useful choices of triples. This technique has recently also been used to characterize generalized entropy-conservation conditions for conservative and nonconservative systems \cite{artianoaffordable} and to characterize the PEP conditions in \cite{channodal} in a similar manner to here.

First, we start with analyzing the velocity-equilibrium condition. We will prove the following result.
\begin{prop}[Mass-momentum-flux consistency]
    \label{prop:massmomentumfluxconsistency}
    Let $\bm{f}_h : \mathcal{S}_U \times \mathcal{S}_U \rightarrow \mathbb{R}^3$ be a consistent, algebraically PEP numerical flux, where $\mathcal{S}_U$ is an admissible supercritical regime. Then for any $P \in \mathbb{R}_+$ and $V \in \mathbb{R}$ the following consistency relation holds:
    \begin{equation}
        f_h^m(\bm{U}_R,\bm{U}_L) = V f_h^{\rho}(\bm{U}_R,\bm{U}_L) + P, \qquad \forall \bm{U}_R,\bm{U}_L \in \mathcal{M}_{P,V}.
        \label{eq:massmomentumfluxrelation}
    \end{equation}
\end{prop}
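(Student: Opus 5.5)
Compute $\nabla v(\bm U) = (-m/\rho^2,\,1/\rho,\,0)^T = \frac{1}{\rho}(-v,\,1,\,0)^T$. At $\bm U_C\in\mathcal{M}_{P,V}$ we have $v(\bm U_C)=V$ and $\rho_C>0$, so the velocity condition in \eqref{eq:apepconditions} becomes
\begin{equation*}
    g(\bm{U}_R,\bm{U}_C) = g(\bm{U}_C,\bm{U}_L), \qquad g(\bm{A},\bm{B}) := f_h^m(\bm{A},\bm{B}) - V f_h^\rho(\bm{A},\bm{B}),
\end{equation*}
for all $\bm U_R,\bm U_C,\bm U_L\in\mathcal{M}_{P,V}$. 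The goal is to show $g\equiv P$ on $\mathcal{M}_{P,V}\times\mathcal{M}_{P,V}$.

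First, consistency (\autoref{dfn:consistency}) on the diagonal gives, for $\bm U\in\mathcal{M}_{P,V}$, $g(\bm U,\bm U)= \rho V^2+P - V\rho V = P$. For arbitrary $\bm U_R,\bm U_L\in\mathcal{M}_{P,V}$, I will use two special choices of triples. Taking the triple $(\bm U_R,\bm U_L,\bm U_L)$, i.e.\ centre $\bm U_C=\bm U_L$, gives $g(\bm U_R,\bm U_L)=g(\bm U_L,\bm U_L)=P$.

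That already yields \eqref{eq:massmomentumfluxrelation}. For symmetry one can also check $g(\bm U_L,\bm U_R)=P$ with the triple $(\bm U_R,\bm U_R,\bm U_L)$, but this is just the same statement with the arguments relabeled. The only point needing care is that $\mathcal{M}_{P,V}$ is nonempty and that the triple lies entirely in one equilibrium set. This is automatic because both states are taken from $\mathcal{M}_{P,V}$. Also, the prefactor $1/\rho_C$ must be nonzero, which holds since $\rho>0$ on $\mathcal{S}_U$ by \autoref{prop:supercriticalconservativevariables}. I expect no real obstacle: the argument is purely algebraic and uses neither the pressure condition nor the geometry of $\Gamma_P$.
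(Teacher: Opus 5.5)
Your proof is correct and is essentially the paper's argument: you reduce the velocity condition to $g(\bm U_R,\bm U_C)=g(\bm U_C,\bm U_L)$ using $\nabla v(\bm U_C)=\rho_C^{-1}(-V,1,0)^T$, then choose $\bm U_C=\bm U_L$ and use consistency to get $g(\bm U_L,\bm U_L)=P$. One small slip in your aside, which is harmless because your main step already covers every ordered pair: the triple with $\bm U_C=\bm U_R$ gives $g(\bm U_R,\bm U_R)=g(\bm U_R,\bm U_L)$, which is the same ordered identity again, not $g(\bm U_L,\bm U_R)=P$.
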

This result is a direct consequence of the velocity equilibrium.
\begin{proof}
    Assume the numerical flux $\bm{f}_h$ is algebraically PEP and consistent and $\mathcal{S}_U$ is some admissible supercritical regime. Let $P\in\mathbb{R}_+$ and $V \in \mathbb{R}$ be arbitrary. If $P,V$ are such that $\mathcal{M}_{P,V}$ is empty \eqref{eq:massmomentumfluxrelation} is vacuous and holds automatically. Thus we can assume that $P,V$ are such that $\mathcal{M}_{P,V}$ is nonempty. Note that $v$ is differentiable on $\mathcal{S}_U$, its gradient satisfies:
    \begin{equation*}
        \nabla v(\bm{U}) = \begin{bmatrix}
            -\frac{V}{\rho} \\ \frac{1}{\rho} \\ 0
        \end{bmatrix}, \qquad \forall \bm{U} \in \mathcal{M}_{P,V},
    \end{equation*}
    using \eqref{eq:vpdefinition} and that $v(\bm{U}) = V$ for $\bm{U}\in\mathcal{M}_{P,V}$. By \autoref{dfn:algebraicpressureequilibriumpreservation} of the algebraic PEP property, we have for any arbitrary $\bm{U}_R, \bm{U}_C, \bm{U}_L\in\mathcal{M}_{P,V}$ that:
    \begin{align*}
        0&=\nabla v(\bm{U}_C)^T\Delta \bm{f}_h(\bm{U}_R,\bm{U}_C,\bm{U}_L)  \\
        &=-\frac{V}{\rho_C}\left[f_h^\rho(\bm{U}_R, \bm{U}_C) - f_h^\rho(\bm{U}_C, \bm{U}_L) \right] + \frac{1}{\rho_C}\left[f_h^m(\bm{U}_R, \bm{U}_C) - f_h^m(\bm{U}_C, \bm{U}_L) \right] .
    \end{align*}
    Choosing $\bm{U}_C = \bm{U}_L$, it holds that:
    \begin{equation*}
        0 = -\frac{V}{\rho_L}\left[f_h^\rho(\bm{U}_R,\bm{U}_L) - \rho_L V \right] + \frac{1}{\rho_L}\left[f_h^m(\bm{U}_R,\bm{U}_L) - (\rho_L V^2 +P) \right].
    \end{equation*}
    Simplifying gives:
    \begin{equation*}
        f_h^m(\bm{U}_R,\bm{U}_L) = Vf_h^\rho(\bm{U}_R,\bm{U}_L) + P.
    \end{equation*}
    The result then follows since $\bm{U}_R,\bm{U}_L \in \mathcal{M}_{P,V}$ are arbitrary.
\end{proof}

Before moving on to the full necessary conditions by analyzing the pressure condition in \eqref{eq:apepconditions} we will prove an intermediate result. We will show that an algebraically PEP numerical flux and the gradient of the internal-energy density $\varepsilon$ satisfy a certain relation on the $(P,V)$-equilibrium set. Since pressure is a function of $\bm{\tau} = (\rho,\varepsilon)$ this will simplify the analysis of the pressure equilibrium as we can then work with two variables given by the $(\rho,\varepsilon)$-coordinates instead of conservative variables $\bm{U}$ with three components.

\begin{prop}[Internal-energy condition]
    \label{prop:energyevolution}
    Let $\bm{f}_h : \mathcal{S}_U \times \mathcal{S}_U \rightarrow \mathbb{R}^3$ be a consistent, algebraically PEP numerical flux, where $\mathcal{S}_U$ is an admissible supercritical regime. Then for any $P \in \mathbb{R}_+$ and $V \in \mathbb{R}$ the following is satisfied:
    \begin{equation*}
        \nabla \varepsilon(\bm{U}_C)^T\Delta \bm{f}_h(\bm{U}_R,\bm{U}_C,\bm{U}_L) = f_h^\varepsilon(\bm{U}_R, \bm{U}_C; V) - f_h^\varepsilon(\bm{U}_C, \bm{U}_L;V), \qquad \forall \bm{U}_R, \bm{U}_C, \bm{U}_L \in \mathcal{M}_{P,V},
    \end{equation*}
    where:
    \begin{equation*}
        f_h^\varepsilon(\bm{U}_R, \bm{U}_C; V) := f_h^E(\bm{U}_R, \bm{U}_C) - \frac12 V^2 f_h^\rho(\bm{U}_R, \bm{U}_C).
    \end{equation*}
\end{prop}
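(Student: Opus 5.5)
Since the left-hand side is linear in $\Delta \bm{f}_h$, the plan is to compute $\nabla \varepsilon$ explicitly on $\mathcal{M}_{P,V}$ and then use Proposition~\ref{prop:massmomentumfluxconsistency} to eliminate the momentum-flux contribution. From \eqref{eq:vpdefinition}, $\varepsilon(\bm{U}) = E - \tfrac12 m^2/\rho$, so
\begin{equation*}
    \nabla \varepsilon(\bm{U}) = \begin{bmatrix} \tfrac12 v(\bm{U})^2 \\ -v(\bm{U}) \\ 1 \end{bmatrix},
\end{equation*}
which holds on all of $\mathcal{S}_U$ since $\rho>0$ there by Proposition~\ref{prop:supercriticalconservativevariables}. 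For $\bm{U}_C \in \mathcal{M}_{P,V}$ we have $v(\bm{U}_C)=V$, so the gradient becomes the constant vector $(\tfrac12 V^2, -V, 1)^T$. If $\mathcal{M}_{P,V}$ is empty, the claim is vacuous, exactly as in the proof of Proposition~\ref{prop:massmomentumfluxconsistency}.

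Fix arbitrary $\bm{U}_R,\bm{U}_C,\bm{U}_L\in\mathcal{M}_{P,V}$ and expand:
\begin{equation*}
    \nabla \varepsilon(\bm{U}_C)^T \Delta \bm{f}_h = \tfrac12 V^2 \Delta f_h^\rho - V \Delta f_h^m + \Delta f_h^E .
\end{equation*}
Both pairs $(\bm{U}_R,\bm{U}_C)$ and $(\bm{U}_C,\bm{U}_L)$ lie in $\mathcal{M}_{P,V}$, so Proposition~\ref{prop:massmomentumfluxconsistency} gives $f_h^m = V f_h^\rho + P$ on each pair. Taking the difference, the constant $P$ cancels and $\Delta f_h^m = V \Delta f_h^\rho$. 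Substituting this yields
\begin{equation*}
    \nabla \varepsilon(\bm{U}_C)^T \Delta \bm{f}_h = \Delta f_h^E - \tfrac12 V^2 \Delta f_h^\rho ,
\end{equation*}
which is precisely $f_h^\varepsilon(\bm{U}_R,\bm{U}_C;V) - f_h^\varepsilon(\bm{U}_C,\bm{U}_L;V)$ by the definition of $f_h^\varepsilon$.

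There is no real obstacle here. The only points needing care are that the gradient of $\varepsilon$ is evaluated at $\bm{U}_C$, so that $v=V$ can be substituted, and that Proposition~\ref{prop:massmomentumfluxconsistency} is applied to two different pairs, so its two uses must be subtracted for $P$ to drop out. Note that the pressure condition in \eqref{eq:apepconditions} is not used; only the velocity condition enters, through Proposition~\ref{prop:massmomentumfluxconsistency}.
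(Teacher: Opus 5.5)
Your proposal is correct and follows the paper's route: evaluate $\nabla\varepsilon(\bm{U}_C)=(\tfrac12 V^2,-V,1)^T$ on $\mathcal{M}_{P,V}$, expand, and apply Proposition~\ref{prop:massmomentumfluxconsistency} to both pairs so that $P$ cancels and $\Delta f_h^m = V\Delta f_h^\rho$. Your algebra checks out, and so does your remark that only the velocity condition enters.
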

\begin{proof}
    Assume the numerical flux $\bm{f}_h$ is algebraically PEP and consistent and $\mathcal{S}_U$ is some admissible supercritical regime. Let $P\in \mathbb{R}_+$ and $V \in \mathbb{R}$ be arbitrary such that $\mathcal{M}_{P,V}$ is nonempty. Note that $\varepsilon$ is differentiable on $\mathcal{S}_U$, its gradient satisfies:
    \begin{equation*}
        \nabla \varepsilon(\bm{U}) = \begin{bmatrix}
            \frac12 V^2 \\ -V \\ 1
        \end{bmatrix}, \qquad \forall \bm{U} \in \mathcal{M}_{P,V}.
    \end{equation*}
    Let $\bm{U}_R, \bm{U}_C, \bm{U}_L \in \mathcal{M}_{P,V}$ also be arbitrary. Using the gradient on $\mathcal{M}_{P,V}$, it follows by definition:
    \begin{align*}
        \nabla \varepsilon(\bm{U}_C)^T\Delta \bm{f}_h(\bm{U}_R,\bm{U}_C,\bm{U}_L) &= \left[f_h^E(\bm{U}_R, \bm{U}_C) - f_h^E(\bm{U}_C, \bm{U}_L) \right] -V\left[f_h^m(\bm{U}_R, \bm{U}_C) - f_h^m(\bm{U}_C, \bm{U}_L) \right]  \\
        &\qquad \qquad  + \frac12 V^2\left[f_h^\rho(\bm{U}_R, \bm{U}_C) - f_h^\rho(\bm{U}_C, \bm{U}_L) \right].
    \end{align*}
    Since $\bm{U}_R, \bm{U}_C, \bm{U}_L \in \mathcal{M}_{P,V}$ and the numerical flux is algebraically PEP and consistent, \autoref{prop:massmomentumfluxconsistency} applies. We can therefore simplify:
    \begin{align*}
        \nabla \varepsilon(\bm{U}_C)^T\Delta \bm{f}_h(\bm{U}_R,\bm{U}_C,\bm{U}_L) &= \left[f_h^E(\bm{U}_R, \bm{U}_C) - f_h^E(\bm{U}_C, \bm{U}_L) \right] - \frac12 V^2\left[f_h^\rho(\bm{U}_R, \bm{U}_C) - f_h^\rho(\bm{U}_C, \bm{U}_L) \right]  \\
        &= \left[f_h^E(\bm{U}_R, \bm{U}_C) - \frac12 V^2 f_h^\rho(\bm{U}_R, \bm{U}_C)\right] - \left[f_h^E(\bm{U}_C, \bm{U}_L)  - \frac12 V^2f_h^\rho(\bm{U}_C, \bm{U}_L) \right]  \\
        &= f_h^\varepsilon(\bm{U}_R, \bm{U}_C; V) - f_h^\varepsilon(\bm{U}_C, \bm{U}_L;V).
    \end{align*}
    The result follows since $\bm{U}_R, \bm{U}_C, \bm{U}_L \in \mathcal{M}_{P,V}$ are arbitrary.
\end{proof}

With this intermediate result established we can provide the final necessity lemma of this section. This lemma shows that a consistent numerical flux can only be algebraically PEP on all triples of $\mathcal{S}_U \times \mathcal{S}_U\times \mathcal{S}_U$ if the EOS satisfies certain strict geometric conditions. Specifically, any isobar $\Gamma_P$ should have tangent lines $L_{\bm{\tau}}\Gamma_P$ that intersect any other tangent lines of $\Gamma_P$ in at least one point. This rules out EOS with isobars in $(\rho,\varepsilon)$-coordinates as sketched in \autoref{fig:paraiso}. For many supercritical fluids this geometric condition is generally not satisfied and therefore the lemma shows that the singularities found for example in the recent PEP flux of \cite{coppolapressure} are unavoidable. 

\begin{lem}[Necessary isobar geometry]
    \label{lem:necessaryisobargeometry}
    Let $\bm{f}_h : \mathcal{S}_U \times \mathcal{S}_U \rightarrow \mathbb{R}^3$ be a consistent, algebraically PEP numerical flux, where $\mathcal{S}_U$ is an admissible supercritical regime. Then for any $P \in \mathbb{R}_+$ so that the isobar $\Gamma_P$ is nonempty, it holds that:
    \begin{equation*}
        L_{\bm{\tau}_1}\Gamma_P \cap L_{\bm{\tau}_2}\Gamma_P \neq \varnothing, \qquad \forall \bm{\tau}_1,\bm{\tau}_2 \in \Gamma_P.
    \end{equation*}
\end{lem}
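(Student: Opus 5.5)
The plan is to turn the pressure condition in \eqref{eq:apepconditions} into a condition on the pair $(f_h^\rho, f_h^\varepsilon)$ evaluated at two arbitrary points of $\mathcal{M}_{P,V}$. That condition will say the two-dimensional vector $(f_h^\rho,f_h^\varepsilon)$ lies on both tangent lines, so they must intersect.

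\textbf{Step 1: the pressure condition in $(\rho,\varepsilon)$-coordinates.} Fix $P$ with $\Gamma_P$ nonempty; take $V=0$, or keep $V$ arbitrary. Since $\bm{\tau}(\bm{U})=(\rho,\varepsilon(\bm{U}))$, the rows of $D\bm{\tau}(\bm{U}_C)$ are $\nabla\rho=(1,0,0)$ and $\nabla\varepsilon(\bm{U}_C)$. Applying \autoref{prop:energyevolution}, the pressure condition in \eqref{eq:apepconditions} becomes
\begin{equation*}
  p_\rho(\bm{\tau}_C)\,\Delta f_h^\rho + p_\varepsilon(\bm{\tau}_C)\,\Delta f_h^\varepsilon(\cdot;V) = 0 .
\end{equation*}
This holds for all triples in $\mathcal{M}_{P,V}$.

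\textbf{Step 2: reduce to pairs.} As in the proof of \autoref{prop:massmomentumfluxconsistency}, choose $\bm{U}_C=\bm{U}_L$. By consistency, $f_h^\rho(\bm{U}_L,\bm{U}_L)=\rho_LV$ and $f_h^\varepsilon(\bm{U}_L,\bm{U}_L;V)=E_L+PV-\tfrac12V^2\rho_LV=V(\varepsilon_L+P)$. Writing $\bm{g}(\bm{U}_R,\bm{U}_L):=(f_h^\rho,f_h^\varepsilon)(\bm{U}_R,\bm{U}_L)$, this gives
\begin{equation*}
  \langle\nabla p(\bm{\tau}_L),\ \bm{g}(\bm{U}_R,\bm{U}_L)-V(\bm{\tau}_L+(0,P))\rangle=0 .
\end{equation*}
Similarly, choose $\bm{U}_C=\bm{U}_R$, so that the first flux becomes consistent. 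This gives the same orthogonality relation at $\bm{\tau}_R$.

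\textbf{Step 3: read off the tangent lines.} To avoid the $V$-shift, take $V=0$, which is allowed because $\Theta(\bm{\tau},0)\in\mathcal{M}_{P,0}$ for every $\bm{\tau}\in\Gamma_P$ by \autoref{prop:equilibriumset}. Then the relations say $\langle\nabla p(\bm{\tau}_L),\bm{g}\rangle=0$ and $\langle\nabla p(\bm{\tau}_R),\bm{g}\rangle=0$, which are not yet affine statements. I expect this to be the main obstacle: one must check that the right affine combination appears, namely $\bm{g}-\bm{\tau}_L\in T_{\bm{\tau}_L}\Gamma_P$.

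I would redo the reduction with general $V$ and compare it with the $V=0$ case. Since $f_h$ may depend arbitrarily on $V$ through $\bm{U}$, I would instead use a single $V\neq0$ and the vector $\bm{w}:=\bm{g}/V-(0,P)$. The relations then read $\langle\nabla p(\bm{\tau}_L),\bm{w}-\bm{\tau}_L\rangle=0$ and $\langle\nabla p(\bm{\tau}_R),\bm{w}-\bm{\tau}_R\rangle=0$. By \autoref{lem:tangentspaceorthogonality}, this means $\bm{w}\in L_{\bm{\tau}_L}\Gamma_P\cap L_{\bm{\tau}_R}\Gamma_P$. Setting $\bm{U}_{L,R}=\Theta(\bm{\tau}_{1,2},V)$ with, say, $V=1$ then shows the intersection is nonempty for arbitrary $\bm{\tau}_1,\bm{\tau}_2\in\Gamma_P$. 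The remaining work is careful bookkeeping of the consistency terms, so that the shift is exactly $V\bm{\tau}$ plus a constant and divides out cleanly.
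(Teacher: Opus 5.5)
Your proposal is correct and, once the $V=0$ detour is dropped (it only gives $\bm{g}\perp\nabla p(\bm{\tau}_L),\nabla p(\bm{\tau}_R)$ and carries no affine information), it is essentially the paper's argument. You fix $V\neq0$, reduce to pairs via $\bm{U}_C=\bm{U}_L$ and $\bm{U}_C=\bm{U}_R$ using consistency, and conclude that $\bm{w}=(f_h^\rho/V,\ f_h^\varepsilon/V-P)$ lies on both tangent lines by \autoref{lem:tangentspaceorthogonality}, with arbitrary $\bm{\tau}_1,\bm{\tau}_2\in\Gamma_P$ realized through \autoref{prop:equilibriumset}. One small slip: consistency gives $f_h^E(\bm{U}_L,\bm{U}_L)=V(E_L+P)$, not $E_L+PV$, although your final value $f_h^\varepsilon(\bm{U}_L,\bm{U}_L;V)=V(\varepsilon_L+P)$ is correct.
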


\begin{proof}
    Assume the numerical flux $\bm{f}_h$ is algebraically PEP and consistent and $\mathcal{S}_U$ is some admissible supercritical regime. Let $P\in \mathbb{R}_+$ be arbitrary so that $\Gamma_P$ is nonempty. Also, choose any $V \in \mathbb{R} \setminus \{0\}$. By \autoref{prop:pressureregularity}, pressure is $C^1$ on the admissible supercritical regime. Note that:
    \begin{equation*}
        D\bm{\tau}(\bm{U}) = \begin{bmatrix}
            \nabla \rho(\bm{U})^T \\ \nabla \varepsilon(\bm{U})^T
        \end{bmatrix}, \qquad \nabla \rho(\bm{U}) = \begin{bmatrix}
            1 \\ 0 \\ 0
        \end{bmatrix}.
    \end{equation*}
    By \autoref{dfn:algebraicpressureequilibriumpreservation} of the algebraic PEP property, it holds for all $\bm{U}_R, \bm{U}_C, \bm{U}_L\in \mathcal{M}_{P,V}$ that, denoting $\bm{\tau}_C := \bm{\tau}(\bm{U}_C)$:
    \begin{align*}
        0 &= \nabla {p(\bm{\tau}_C)}^TD\bm{\tau}(\bm{U}_C) \Delta \bm{f}_h(\bm{U}_R,\bm{U}_C,\bm{U}_L) \\
        &= p_{\rho}(\bm{\tau}_C)\left[f_h^\rho(\bm{U}_R, \bm{U}_C) - f_h^\rho(\bm{U}_C, \bm{U}_L) \right] + p_{\varepsilon}(\bm{\tau}_C)\left[f_h^\varepsilon(\bm{U}_R, \bm{U}_C;V) - f_h^\varepsilon(\bm{U}_C, \bm{U}_L;V) \right],
    \end{align*}
    where we used \autoref{prop:energyevolution} for the internal-energy term. Setting $\bm{U}_C = \bm{U}_L$ and using consistency gives:
    \begin{align*}
        0 &= p_{\rho}(\bm{\tau}_L)\left[f_h^\rho(\bm{U}_R,\bm{U}_L) - \rho_L V \right] + p_{\varepsilon}(\bm{\tau}_L)\left[f_h^\varepsilon(\bm{U}_R,\bm{U}_L;V) - \left(V(E_L+P) - \frac12 \rho_L V^3 \right) \right]  \nonumber \\
        &= p_{\rho}(\bm{\tau}_L)\left[f_h^\rho(\bm{U}_R,\bm{U}_L) - \rho_L V \right] + p_{\varepsilon}(\bm{\tau}_L)\left[f_h^\varepsilon(\bm{U}_R,\bm{U}_L;V) - \left(V(\varepsilon_L + \frac12 \rho_L V^2 +P) - \frac12 \rho_L V^3 \right) \right] \nonumber \\[0.5em]
        &= p_{\rho}(\bm{\tau}_L)\left[f_h^\rho(\bm{U}_R,\bm{U}_L) - \rho_L V \right] + p_{\varepsilon}(\bm{\tau}_L)\left[f_h^\varepsilon(\bm{U}_R,\bm{U}_L;V) - PV -\varepsilon_LV \right]. 
    \end{align*}
    In a similar manner, it follows from setting $\bm{U}_C = \bm{U}_R$ that:
    \begin{equation*}
        p_{\rho}(\bm{\tau}_R)\left[f_h^\rho(\bm{U}_R,\bm{U}_L) - \rho_R V \right] + p_{\varepsilon}(\bm{\tau}_R)\left[f_h^\varepsilon(\bm{U}_R,\bm{U}_L;V) - PV -\varepsilon_RV \right] = 0.
    \end{equation*}
    Since $V \neq 0$, we can define:
    \begin{equation*}
        \bm{\tau}_{LR} := \begin{bmatrix}
            \displaystyle \frac{f_h^\rho(\bm{U}_R,\bm{U}_L)}{V} \\[0.8em] \displaystyle\frac{f_h^\varepsilon(\bm{U}_R,\bm{U}_L;V)}{V} - P
        \end{bmatrix},
    \end{equation*}
    and the previous two equations imply $\bm{\tau}_{LR} - \bm{\tau}_L \in \ker(\left<\nabla p(\bm{\tau}_L),\cdot \right>)$ and $\bm{\tau}_{LR} - \bm{\tau}_R \in \ker(\left<\nabla p(\bm{\tau}_R),\cdot \right>)$. But then \autoref{lem:tangentspaceorthogonality} implies:
    \begin{equation*}
        \bm{\tau}_{LR} - \bm{\tau}_L \in T_{\bm{\tau}_L} \Gamma_P, \qquad \bm{\tau}_{LR} - \bm{\tau}_R \in T_{\bm{\tau}_R} \Gamma_P.
    \end{equation*}
    Adding $\bm{\tau}_L$ to $\bm{\tau}_{LR} - \bm{\tau}_L$ and $\bm{\tau}_R$ to $\bm{\tau}_{LR} - \bm{\tau}_R$ and using the definition of tangent lines then gives:
    \begin{equation*}
        \bm{\tau}_{LR}  \in L_{\bm{\tau}_L} \Gamma_P \cap L_{\bm{\tau}_R}\Gamma_P.
    \end{equation*}
    Thus $L_{\bm{\tau}_L} \Gamma_P \cap L_{\bm{\tau}_R}\Gamma_P \neq \varnothing$. Since $\bm{U}_L,\bm{U}_R \in \mathcal{M}_{P,V}$ were arbitrary this holds for any $\bm{\tau}_L,\bm{\tau}_R \in \Gamma_P$ by \autoref{prop:equilibriumset}, completing the proof.
\end{proof}

\begin{figure}
    \centering
    \includegraphics[width=0.6\linewidth]{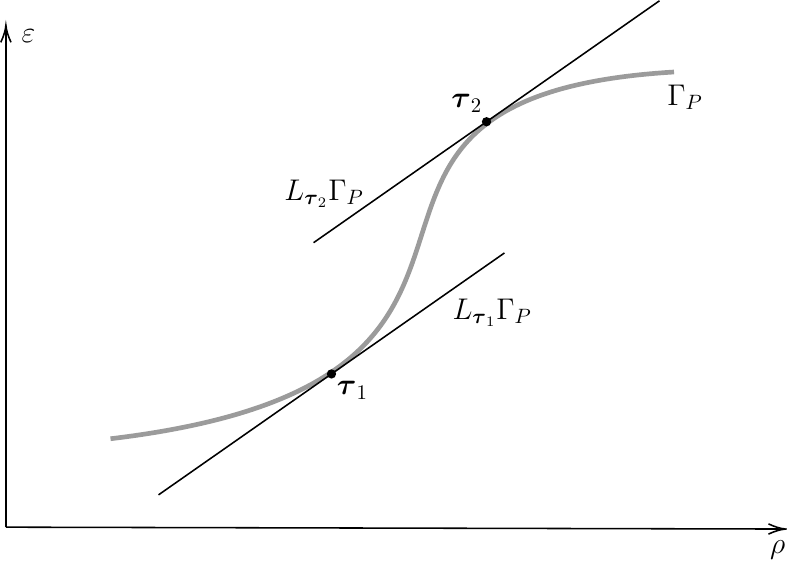}
    \caption{An impression of an isobar $\Gamma_P$ with points $\bm{\tau}_1,\bm{\tau}_2 \in \Gamma_P$ that have parallel but distinct tangent lines $L_{\bm{\tau}_1}\Gamma_P$ and $L_{\bm{\tau}_2}\Gamma_P$, respectively.}
    \label{fig:paraiso}
\end{figure}

\subsection{Sufficient conditions}
To finish our algebraic characterization we must show the converse of the implication in \autoref{lem:necessaryisobargeometry} also holds. That is, we must prove that if for any isobar $\Gamma_P$ the intersection between any two tangent lines is nonempty, then an algebraically PEP and consistent numerical-flux function exists. Thus, throughout this section we make the following assumption.
\begin{ass}[Isobar tangent-line intersection]
    \label{ass:tangentlineintersection}
    Let $\mathcal{S}_{\tau}$ be an admissible supercritical regime.
    For every $P \in \mathbb{R}_+$ such that $\Gamma_P$ is nonempty it holds that:
    \begin{equation}
        L_{\bm{\tau}_1}\Gamma_P \cap L_{\bm{\tau}_2}\Gamma_P \neq \varnothing, \qquad \forall \bm{\tau}_1,\bm{\tau}_2 \in \Gamma_P.
    \end{equation}
\end{ass}
We will prove the existence of an algebraically PEP and consistent numerical-flux function based on \autoref{ass:tangentlineintersection} constructively. The particularly important design choice will be the means for mass and internal-energy density, however the numerical flux also requires some additional means. Thus, for any real-valued function $a$ defined on $\mathcal{S}_U$ or $\mathcal{S}_{\tau}$, let $\overline{a}$ denote the arithmetic two-point mean i.e.\ $\overline{a}(\bm{U}_R,\bm{U}_L) = \tfrac12 (a(\bm{U}_L) + a(\bm{U}_R))$. The main property of the arithmetic mean\footnote{Which is not exclusive to the arithmetic mean.} that we will make use of is that $a(\bm{U}_L) = a(\bm{U}_R)= A$ implies $\overline{a}(\bm{U}_R,\bm{U}_L) = A$, we will refer to this property as \emph{value consistency}. Having defined these means, the numerical flux $\bm{f}_h : \mathcal{S}_U \times \mathcal{S}_U \rightarrow \mathbb{R}^3$ that we propose to demonstrate sufficiency is given as:
\begin{align}
    f_h^\rho(\bm{U}_R,\bm{U}_L) &:= \tilde{\rho}\, \overline{v}, \nonumber \\[0.3em] 
    f_h^m(\bm{U}_R,\bm{U}_L) &:= f_h^\rho\, \overline{v} + \overline{p}, \label{eq:flux} \\[0.3em] 
    f_h^E(\bm{U}_R,\bm{U}_L) &:= \overline{v}(\tilde{\varepsilon} + \tfrac12 f_h^\rho\, \overline{v} + \overline{p}), \nonumber
\end{align}
where $\tilde{\rho},\,\tilde{\varepsilon} : \mathcal{S}_{U}\times \mathcal{S}_{U} \rightarrow \mathbb{R}$ are the special mass-density and internal-energy-density means, respectively, based on \autoref{ass:tangentlineintersection}. 

In particular, denoting $\bm{\tau}_R := \bm{\tau}(\bm{U}_R)$ and $\bm{\tau}_L := \bm{\tau}(\bm{U}_L)$, the choice of the mean $(\tilde{\rho},\,\tilde{\varepsilon})$ depends on whether $\bm{U}_R,\bm{U}_L \in \mathcal{M}_{P,V}$ holds or not for some $P \in \mathbb{R}_+$ and $V \in \mathbb{R}$. In case it does, the numerical flux must have the algebraic PEP property. Moreover, the thermodynamic states are on some shared isobar $\Gamma_P$. In case it does not, the flux is not constrained and needs only to be consistent. To satisfy the algebraic PEP property we will show that it is sufficient to pick the mean $(\tilde{\rho},\tilde{\varepsilon})$ between two points $\bm{\tau}_R,\bm{\tau}_L \in \Gamma_P$ in the intersection $L_{\bm{\tau}_L}\Gamma_P \cap L_{\bm{\tau}_R} \Gamma_P$ of the isobar tangent lines at these two points when $p(\bm{\tau}_R) = p(\bm{\tau}_L)$. For convenience, we will use this mean even if $v(\bm{U}_R) \neq v(\bm{U}_L)$. This intersection is never empty under \autoref{ass:tangentlineintersection} and, since tangent lines are affine subspaces of $\mathbb{R}^2$, consists of either a unique point or the entire tangent line. To deal with the possibly nonunique intersection we define the mean as follows:
\begin{equation}
    (\tilde{\rho},\,\tilde{\varepsilon}) := \tilde{\bm{\tau}}(\bm{U}_R,\bm{U}_L)=\begin{cases}
        \displaystyle\argmin_{\tilde{\bm{\tau}} \in \mathcal{I}} \Vert\tilde{\bm{\tau}} - \bm{\tau}^*\rVert, & p(\bm{\tau}_R) = p(\bm{\tau}_L)=P, \\[0.5em]
        \bm{\tau}^*, & p(\bm{\tau}_R) \neq p(\bm{\tau}_L),
    \end{cases} \qquad \mathcal{I} := L_{\bm{\tau}_L}\Gamma_P \cap L_{\bm{\tau}_R} \Gamma_P,
    \label{eq:massenergymean}
\end{equation}
where $\bm{\tau}^*$ denotes any consistent two-point mean without singularities and $\lVert\cdot\rVert$ is the Euclidean norm. If the tangent-line intersection is a single point, this point will be recovered from the minimization problem, otherwise the unique closest point to $\bm{\tau}^*$ will be selected. We note that a natural choice for the mean in case of $P_R := p(\bm{\tau}_R) \neq p(\bm{\tau}_L) =: P_L$ would have been to also pick a point in the intersection $(\tilde{\rho},\tilde{\varepsilon}) \in L_{\bm{\tau}_L}\Gamma_{P_L} \cap L_{\bm{\tau}_R} \Gamma_{P_R}$ which we can show corresponds to the choice of \cite{coppolapressure}. However, this would hinder our characterization as this requires this intersection to be nonempty, which is stronger than what we can prove must hold necessarily if a consistent, algebraically PEP numerical flux exists. Furthermore, since we only care about an existence characterization for algebraically PEP numerical-flux functions and not a fully well-defined numerical scheme, it is no problem to us that \eqref{eq:massenergymean} can be discontinuous. As already mentioned in \autoref{ssec:algebraicpep}, extra regularity requirements on the flux would complicate the analysis and likely require more conditions on the EOS. 

Before addressing the algebraic PEP property we must address whether the flux 
\eqref{eq:flux} is consistent. Under \autoref{ass:tangentlineintersection}, this is indeed the case. 
\begin{prop}[Flux consistency]
    \label{prop:fluxconsistency}
    Under \autoref{ass:tangentlineintersection}, the flux $\bm{f}_h$ as in \eqref{eq:flux} with mass-density and internal-energy-density means $(\tilde{\rho},\tilde{\varepsilon})$ as in \eqref{eq:massenergymean} is well-defined and consistent in the sense of \autoref{dfn:consistency}.
\end{prop}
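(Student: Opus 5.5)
We split the claim into well-definedness of the mean \eqref{eq:massenergymean} and consistency of the flux \eqref{eq:flux}. Everything except $\tilde{\bm{\tau}}$ in \eqref{eq:flux} is built from $v$, $p$ and arithmetic means. By \autoref{prop:supercriticalconservativevariables} and \autoref{prop:pressureregularity}, $v$ and $p$ are defined everywhere on $\mathcal{S}_U$, so these parts need no argument.

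\textbf{Well-definedness.} The case $p(\bm{\tau}_R)\neq p(\bm{\tau}_L)$ is immediate, since $\bm{\tau}^*$ is assumed to be a singularity-free two-point mean. In the case $p(\bm{\tau}_R)=p(\bm{\tau}_L)=P$, both $\bm{\tau}_L,\bm{\tau}_R\in\Gamma_P$, so $\Gamma_P$ is nonempty. By \autoref{lem:tangentspaceorthogonality} the tangent spaces are one-dimensional, so $L_{\bm{\tau}_L}\Gamma_P$ and $L_{\bm{\tau}_R}\Gamma_P$ are affine lines in $\mathbb{R}^2$. \autoref{ass:tangentlineintersection} gives $\mathcal{I}\neq\varnothing$. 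Two affine lines in the plane that intersect meet either in exactly one point or coincide, so $\mathcal{I}$ is a nonempty closed convex set: a point or a line. The Euclidean projection of $\bm{\tau}^*$ onto a nonempty closed convex subset of $\mathbb{R}^2$ exists and is unique. For a line it is the orthogonal projection $\bm{\tau}_L + \frac{\langle \bm{\tau}^*-\bm{\tau}_L,\bm{t}\rangle}{\lVert\bm{t}\rVert^2}\bm{t}$, with $\bm{t}=(-p_\varepsilon(\bm{\tau}_L),p_\rho(\bm{\tau}_L))\neq 0$ by \autoref{prop:pressureregularity}. Hence the $\argmin$ is a single, finite point.

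\textbf{Consistency.} Take $\bm{U}_R=\bm{U}_L=\bm{U}$. Then $p(\bm{\tau}_R)=p(\bm{\tau}_L)=P$, and $\mathcal{I}=L_{\bm{\tau}}\Gamma_P$ is a full line containing $\bm{\tau}$. Here lies the main obstacle: the $\argmin$ returns the projection of $\bm{\tau}^*(\bm{U},\bm{U})$ onto this line, and we need it to equal $\bm{\tau}$. Consistency of $\bm{\tau}^*$ gives $\bm{\tau}^*(\bm{U},\bm{U})=\bm{\tau}$, and $\bm{\tau}\in\mathcal{I}$ makes the minimum distance zero, so $\tilde{\bm{\tau}}=\bm{\tau}$, i.e.\ $(\tilde\rho,\tilde\varepsilon)=(\rho,\varepsilon)$. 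This is the one place where consistency of the auxiliary mean $\bm{\tau}^*$ is essential, so we read ``consistent two-point mean'' as $\bm{\tau}^*(\bm{U},\bm{U})=\bm{\tau}(\bm{U})$.

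With $\tilde{\bm{\tau}}=\bm{\tau}$, value consistency of the arithmetic mean gives $\overline{v}=v$ and $\overline{p}=p$. Substituting into \eqref{eq:flux}:
\begin{equation*}
f_h^\rho=\rho v,\qquad f_h^m=\rho v^2+p,\qquad f_h^E=v\left(\varepsilon+\tfrac12\rho v^2+p\right)=v(E+p),
\end{equation*}
where the last step uses \eqref{eq:vpdefinition}. This equals $\bm{f}(\bm{U})$ in \eqref{eq:Euler}, as required by \autoref{dfn:consistency}.
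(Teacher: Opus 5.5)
Your proposal is correct and follows essentially the same route as the paper's proof. Well-definedness comes from the fact that two intersecting lines in the plane meet in a single point or coincide, so the $\argmin$ is unique; consistency comes from $\bm{\tau}^*(\bm{U},\bm{U})=\bm{\tau}(\bm{U})\in L_{\bm{\tau}}\Gamma_P$ forcing $\tilde{\bm{\tau}}=\bm{\tau}$, together with value consistency of the arithmetic means, and your explicit projection formula and final substitution just spell out what the paper states in one line.
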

The proof is given in \autoref{ssec:fluxconsistency}.

We can now continue by proving that under \autoref{ass:tangentlineintersection}, a numerical flux \eqref{eq:flux} satisfies the velocity condition in \eqref{eq:apepconditions}. This can be done simply by using the velocity definition \eqref{eq:vpdefinition}. 
\begin{prop}[Velocity condition]
    \label{prop:velocityequilibrium}
    For any $P \in \mathbb{R}_+$ and $V \in \mathbb{R}$ the numerical flux $\bm{f}_h$ as in \eqref{eq:flux} satisfies:
    \begin{equation*}
        \nabla{v(\bm{U}_C)}^T\Delta \bm{f}_h(\bm{U}_R, \bm{U}_C, \bm{U}_L) = 0, \qquad \forall \bm{U}_R, \bm{U}_C, \bm{U}_L \in \mathcal{M}_{P,V}.
    \end{equation*}
\end{prop}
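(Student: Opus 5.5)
The proof will be a direct computation using the explicit gradient of $v$ on $\mathcal{M}_{P,V}$ from the proof of \autoref{prop:massmomentumfluxconsistency}, namely $\nabla v(\bm{U}_C) = \rho_C^{-1}(-V,1,0)^T$. With it,
\begin{equation*}
    \nabla v(\bm{U}_C)^T\Delta \bm{f}_h(\bm{U}_R,\bm{U}_C,\bm{U}_L) = \frac{1}{\rho_C}\Big( \big[f_h^m - V f_h^\rho\big](\bm{U}_R,\bm{U}_C) - \big[f_h^m - V f_h^\rho\big](\bm{U}_C,\bm{U}_L)\Big).
\end{equation*}
So it suffices to show the pairwise identity $f_h^m(\bm{U}_1,\bm{U}_2) - V f_h^\rho(\bm{U}_1,\bm{U}_2) = P$ for every pair $\bm{U}_1,\bm{U}_2 \in \mathcal{M}_{P,V}$. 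This is exactly the relation \eqref{eq:massmomentumfluxrelation}, which the flux must satisfy necessarily. The two bracketed terms then both equal $P$ and cancel.

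To verify the pairwise identity, I will use value consistency of the arithmetic mean. Since $v(\bm{U}_1) = v(\bm{U}_2) = V$ and $p(\bm{\tau}(\bm{U}_1)) = p(\bm{\tau}(\bm{U}_2)) = P$, we get $\overline{v} = V$ and $\overline{p} = P$. Substituting into \eqref{eq:flux} gives $f_h^m = f_h^\rho \overline{v} + \overline{p} = V f_h^\rho + P$, which is the required identity.

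The computation never uses the specific form of $\tilde{\rho}$. The only requirement is that $f_h^\rho$ is a well-defined real number, so that the algebra is legitimate. Well-definedness of the mean \eqref{eq:massenergymean} is guaranteed by \autoref{prop:fluxconsistency} under \autoref{ass:tangentlineintersection}, and I will cite it for this. That is the one point needing care; the rest is routine. The case $\mathcal{M}_{P,V} = \varnothing$ is vacuous.
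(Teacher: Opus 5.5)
Your proposal is correct and follows the paper's own proof. Both use $\nabla v(\bm{U}_C) = \rho_C^{-1}(-V,1,0)^T$ and value consistency of $\overline{v}$ and $\overline{p}$ to obtain $f_h^m = V f_h^\rho + P$ on each interface, so the two interface contributions cancel. Your added point about well-definedness of $\tilde{\rho}$, via \autoref{prop:fluxconsistency} under \autoref{ass:tangentlineintersection}, is a reasonable bit of care that the paper leaves implicit.
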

\begin{proof}
    Let $P \in \mathbb{R}_+$, $V \in \mathbb{R}$, $\bm{U}_R, \bm{U}_C, \bm{U}_L \in \mathcal{M}_{P,V}$ be arbitrary. Substituting the numerical-flux definitions \eqref{eq:flux} and using the value-consistency property of the arithmetic velocity and pressure means:
    \begin{align*}
        &\nabla{v(\bm{U}_C)}^T\Delta \bm{f}_h(\bm{U}_R, \bm{U}_C, \bm{U}_L) \\
        &\qquad \qquad = -\frac{V}{\rho_C} \left(f_h^\rho(\bm{U}_R, \bm{U}_C) - f_h^\rho(\bm{U}_C, \bm{U}_L)\right) + \frac{1}{\rho_C} \left((f_h^\rho(\bm{U}_R, \bm{U}_C) V + P) -  (f_h^\rho(\bm{U}_C, \bm{U}_L) V + P)\right) \\
        &\qquad \qquad = 0.
    \end{align*}
\end{proof}

Finally, we can turn our attention to the pressure condition in \eqref{eq:apepconditions}. We will first analyze the $(\rho,\varepsilon)$-coordinates as an intermediate result. This will show that the products of the gradient of the $(\rho,\varepsilon)$-coordinates in a central cell and the difference of numerical fluxes on the cell's interfaces equal $V$ times the difference of the mass-density and internal-energy-density means on either of its interfaces. With our choice of $(\rho,\varepsilon)$-means this simple structure will lead to the algebraic PEP property as we will clarify in the proof of the pressure condition in \eqref{eq:apepconditions}.

\begin{prop}[Density and internal-energy conditions]
    \label{prop:densityenergyevolution}
    Under \autoref{ass:tangentlineintersection} and for any $P \in \mathbb{R}_+$, $V \in \mathbb{R}$ and $\bm{U}_R, \bm{U}_C, \bm{U}_L \in \mathcal{M}_{P,V}$, the numerical flux $\bm{f}_h$ as in \eqref{eq:flux} satisfies:
    \begin{equation}
        D\bm{\tau}(\bm{U}_C)\Delta \bm{f}_h(\bm{U}_R,\bm{U}_C,\bm{U}_L) = V(\tilde{\bm{\tau}}(\bm{U}_R, \bm{U}_C) - \tilde{\bm{\tau}}(\bm{U}_C, \bm{U}_L)).
        \label{eq:Tcevolution}
    \end{equation}
\end{prop}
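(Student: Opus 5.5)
The proof is a direct computation: split $D\bm{\tau}(\bm{U}_C)$ into its two rows, compute each component of the left-hand side of \eqref{eq:Tcevolution} from the flux definitions \eqref{eq:flux}, and use value consistency of the arithmetic means on $\mathcal{M}_{P,V}$. Fix $P$, $V$ and $\bm{U}_R,\bm{U}_C,\bm{U}_L\in\mathcal{M}_{P,V}$. Every pair $(\bm{U}_R,\bm{U}_C)$ and $(\bm{U}_C,\bm{U}_L)$ then has equal velocities $V$ and equal pressures $P$. Value consistency gives $\overline{v}=V$ and $\overline{p}=P$ on both interfaces. The flux \eqref{eq:flux} therefore reduces to
\begin{equation*}
f_h^\rho=\tilde{\rho}V,\qquad f_h^m=\tilde{\rho}V^2+P,\qquad f_h^E=V(\tilde{\varepsilon}+\tfrac12\tilde{\rho}V^2+P),
\end{equation*}
where $\tilde{\rho},\tilde{\varepsilon}$ are evaluated at the respective pair. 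By \autoref{prop:fluxconsistency} these means are well-defined under \autoref{ass:tangentlineintersection}, which is the only place that assumption enters.

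For the first row, $\nabla\rho=(1,0,0)^T$, so the first component of the left-hand side is $\Delta f_h^\rho = V(\tilde{\rho}(\bm{U}_R,\bm{U}_C)-\tilde{\rho}(\bm{U}_C,\bm{U}_L))$. For the second row, I would use the gradient $\nabla\varepsilon(\bm{U}_C)=(\tfrac12V^2,-V,1)^T$ computed in the proof of \autoref{prop:energyevolution}. I would not invoke that proposition itself, since it assumes the flux is already algebraically PEP; instead I repeat its short computation directly. This gives
\begin{equation*}
\nabla\varepsilon(\bm{U}_C)^T\Delta\bm{f}_h=\Delta f_h^E - V\,\Delta f_h^m+\tfrac12V^2\,\Delta f_h^\rho .
\end{equation*}
Substituting the reduced fluxes, the constant terms $VP$ in $f_h^E$ and $P$ in $f_h^m$ cancel in the interface differences. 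The remaining terms are $V\Delta\tilde{\varepsilon}+\tfrac12V^3\Delta\tilde{\rho}-V^3\Delta\tilde{\rho}+\tfrac12V^3\Delta\tilde{\rho}=V\Delta\tilde{\varepsilon}$. This is the second component of \eqref{eq:Tcevolution}.

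Stacking the two rows yields \eqref{eq:Tcevolution}. There is no real obstacle: the only care needed is to check that value consistency of $\overline{v}$ and $\overline{p}$ applies on both interfaces, which holds because all three states share $P$ and $V$, and to make sure the $V^3\Delta\tilde{\rho}$ terms cancel exactly.
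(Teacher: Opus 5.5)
Your proposal is correct and matches the paper's proof essentially step for step. The density row follows from $\nabla\rho=(1,0,0)^T$ and $\overline{v}=V$. The internal-energy row follows from expanding $\Delta f_h^E - V\Delta f_h^m + \tfrac12 V^2\Delta f_h^\rho$ with $\overline{v}=V$ and $\overline{p}=P$, so that the $P$ terms and the $V^3\Delta\tilde{\rho}$ terms cancel. Like the paper, you repeat this computation directly rather than invoking \autoref{prop:energyevolution}, which presupposes algebraic PEP.
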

\begin{proof}
    Let $P \in \mathbb{R}_+$, $V \in \mathbb{R}$, $\bm{U}_R, \bm{U}_C, \bm{U}_L \in \mathcal{M}_{P,V}$ be arbitrary. Denote $(\tilde{\rho}_{R,C}, \tilde{\varepsilon}_{R,C}) := \tilde{\bm{\tau}}(\bm{U}_R, \bm{U}_C)$ and similarly $(\tilde{\rho}_{C,L}, \tilde{\varepsilon}_{C,L}) := \tilde{\bm{\tau}}(\bm{U}_C, \bm{U}_L)$. Using the numerical flux \eqref{eq:flux} the density satisfies:
    \begin{align*}
        \nabla \rho(\bm{U}_C)^T \Delta \bm{f}_h(\bm{U}_R,\bm{U}_C,\bm{U}_L)&= f_h^\rho(\bm{U}_R, \bm{U}_C) - f_h^\rho(\bm{U}_C, \bm{U}_L)  = \tilde{\rho}_{R,C}V - \tilde{\rho}_{C,L}V = V\left(\tilde{\rho}_{R,C} - \tilde{\rho}_{C,L}\right),
    \end{align*}
    by value consistency of the velocity mean. Using the definition of the internal-energy density in terms of conservative variables and using the scheme \eqref{eq:discretization} with flux \eqref{eq:flux} gives:
    \begin{align*}
        &\nabla \varepsilon(\bm{U}_C)^T\Delta \bm{f}_h(\bm{U}_R,\bm{U}_C,\bm{U}_L)  \\
        &\qquad \qquad = \left(f_h^E(\bm{U}_R, \bm{U}_C) - f_h^E(\bm{U}_C, \bm{U}_L)  \right) + \frac12 V^2\left(f_h^\rho(\bm{U}_R, \bm{U}_C) - f_h^\rho(\bm{U}_C, \bm{U}_L)\right) \\
        &\qquad \qquad \qquad \qquad  - V\left(f_h^m(\bm{U}_R, \bm{U}_C) - f_h^m(\bm{U}_C, \bm{U}_L)\right) \\
        &\qquad \qquad = (V(\tilde{\varepsilon}_{R,C}+ \frac12 f_h^\rho(\bm{U}_R, \bm{U}_C) V + P)-V(\tilde{\varepsilon}_{C,L}+ \frac12f_h^\rho(\bm{U}_C, \bm{U}_L) V + P))  \\
        & \qquad \qquad \qquad \qquad + \frac12 V^2\left(f_h^\rho(\bm{U}_R, \bm{U}_C) - f_h^\rho(\bm{U}_C, \bm{U}_L)\right) - V((f_h^\rho(\bm{U}_R, \bm{U}_C) V + P) - (f_h^\rho(\bm{U}_C, \bm{U}_L)V + P) ) \\
        &\qquad \qquad = V\left(\tilde{\varepsilon}_{R,C} - \tilde{\varepsilon}_{C,L} \right).
    \end{align*}
    Gathering $\nabla \varepsilon(\bm{U}_C)^T\Delta \bm{f}_h(\bm{U}_R,\bm{U}_C,\bm{U}_L)$ and $\nabla \rho(\bm{U}_C)^T\Delta \bm{f}_h(\bm{U}_R,\bm{U}_C,\bm{U}_L)$ and using the definition of $\tilde{\bm{\tau}}_{R,C}$ and $\tilde{\bm{\tau}}_{C,L}$ gives the desired result.
\end{proof}

The reason that \eqref{eq:Tcevolution} with the mean \eqref{eq:massenergymean} leads to algebraic pressure equilibrium can be understood geometrically and is shown in \autoref{fig:dtaudt}. Suppose a classical solution of the semi-discrete equations \eqref{eq:discretization} exists, and consider an instant at which four neighboring conservative states $\bm{U}_1, \dots , \bm{U}_4 \in \mathcal{S}_U$ belong to the same equilibrium set $\mathcal{M}_{P,V}$. Their thermodynamic coordinates lie on $\Gamma_P$. Combining the semi-discrete equations with the preceding algebraic identity gives, for $j = 2, 3$:
\begin{equation*}
    \dudt{\bm{\tau}_j} = -\frac1h D\bm{\tau}_j \Delta \bm{f}_h(\bm{U}_{j+1},\bm{U}_j,\bm{U}_{j-1}) = -\frac{V}{h}(\tilde{\bm{\tau}}_{j+1,j} - \tilde{\bm{\tau}}_{j,j-1}).
\end{equation*}
Semi-discretely, the pressure remains constant in cells $2$ and $3$ if $\ddtu{\bm{\tau}_2} \in T_{\bm{\tau}_2}\Gamma_P$ and $\ddtu{\bm{\tau}_3} \in T_{\bm{\tau}_3}\Gamma_P$ as then $\bm{\tau}_2$ and $\bm{\tau}_3$ are moving along an isobar. Since $\ddtu{\bm{\tau}_2}$ and $\ddtu{\bm{\tau}_3}$ have the above form, this can be arranged if the means $\tilde{\bm{\tau}}_{3,2}, \tilde{\bm{\tau}}_{2,1}$ lie on the tangent line $L_{\bm{\tau}_2}\Gamma_P$ and $\tilde{\bm{\tau}}_{4,3}, \tilde{\bm{\tau}}_{3,2}$ lie on the tangent line $L_{\bm{\tau}_3}\Gamma_P$. This is the case as the differences between these means are then tangent vectors to the isobar at $\bm{\tau}_2$ and $\bm{\tau}_3$, respectively. Now it becomes clear why the means $\tilde{\bm{\tau}}$ are chosen as points in the intersection of tangent lines. Namely, $\tilde{\bm{\tau}}_{3,2}$ then needs to lie on two tangent lines simultaneously, forcing it to lie in the intersection $L_{\bm{\tau}_2}\Gamma_P \cap L_{\bm{\tau}_3}\Gamma_P$, which is precisely how we constructed it. The following lemma uses exactly this reasoning. This interpretation is conditional on solution existence, however the following proof establishes the algebraic pressure identities directly.

\begin{figure}
    \centering
    \includegraphics[width=0.6\linewidth]{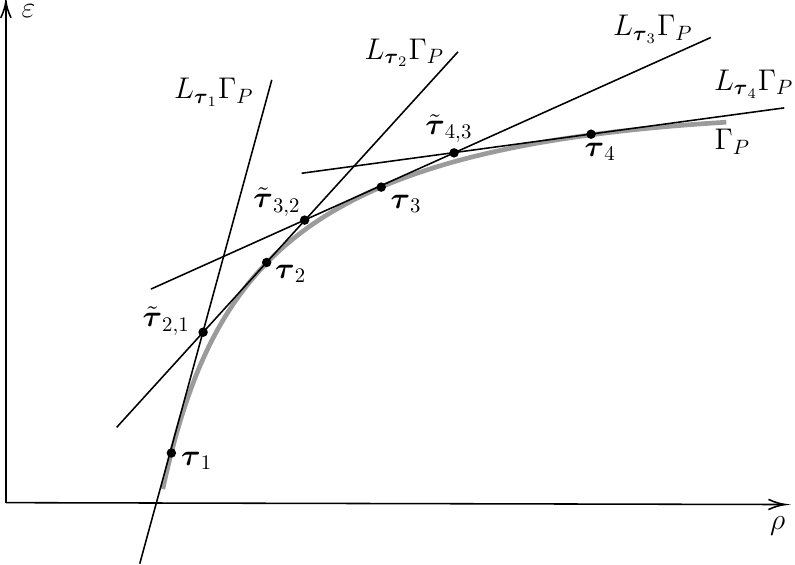}
    \caption{An impression of an isobar $\Gamma_P$ in $(\rho,\varepsilon)$-coordinates with four grid values on it and the choice of mass-density and internal-energy-density means between them.}
    \label{fig:dtaudt}
\end{figure}

\begin{lem}[Isobar tangent-line sufficiency]
    \label{lem:isobartangentlinesufficiency}
    Under \autoref{ass:tangentlineintersection}, the numerical flux $\bm{f}_h$ as in \eqref{eq:flux} is well-defined, consistent and algebraically PEP. 
\end{lem}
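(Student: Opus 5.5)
The proof combines the results already established and then checks the pressure condition of \autoref{dfn:algebraicpressureequilibriumpreservation} directly. Well-definedness and consistency are exactly \autoref{prop:fluxconsistency}. The velocity condition in \eqref{eq:apepconditions} is \autoref{prop:velocityequilibrium}. So only the pressure identity
\begin{equation*}
    \nabla p(\bm{\tau}(\bm{U}_C))^T D\bm{\tau}(\bm{U}_C)\Delta \bm{f}_h(\bm{U}_R,\bm{U}_C,\bm{U}_L) = 0, \qquad \forall \bm{U}_R,\bm{U}_C,\bm{U}_L\in\mathcal{M}_{P,V},
\end{equation*}
remains to be shown for arbitrary $P\in\mathbb{R}_+$ and $V\in\mathbb{R}$ with $\mathcal{M}_{P,V}$ nonempty.

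Fix such $P,V$ and a triple in $\mathcal{M}_{P,V}$. By \autoref{prop:densityenergyevolution} the left-hand side equals
\begin{equation*}
    V\left<\nabla p(\bm{\tau}_C),\, \tilde{\bm{\tau}}(\bm{U}_R,\bm{U}_C) - \tilde{\bm{\tau}}(\bm{U}_C,\bm{U}_L)\right>.
\end{equation*}
By \autoref{prop:equilibriumset}, all three thermodynamic states lie on $\Gamma_P$, so $p(\bm{\tau}_R)=p(\bm{\tau}_C)=p(\bm{\tau}_L)=P$. Both means are therefore given by the first branch of \eqref{eq:massenergymean}. This yields
\begin{equation*}
    \tilde{\bm{\tau}}(\bm{U}_R,\bm{U}_C)\in L_{\bm{\tau}_C}\Gamma_P\cap L_{\bm{\tau}_R}\Gamma_P, \qquad \tilde{\bm{\tau}}(\bm{U}_C,\bm{U}_L)\in L_{\bm{\tau}_L}\Gamma_P\cap L_{\bm{\tau}_C}\Gamma_P.
\end{equation*}
These intersections are nonempty by \autoref{ass:tangentlineintersection}. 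The argmin exists and is unique because each intersection is either a point or a line, and the Euclidean projection onto a line is unique. In particular, both means lie on the common tangent line $L_{\bm{\tau}_C}\Gamma_P$.

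We then write each mean as $\bm{\tau}_C + \bm{t}$ with $\bm{t}\in T_{\bm{\tau}_C}\Gamma_P$. Their difference therefore lies in the vector space $T_{\bm{\tau}_C}\Gamma_P$. By \autoref{lem:tangentspaceorthogonality} this space equals $\ker(\left<\nabla p(\bm{\tau}_C),\cdot\right>)$, so the inner product above vanishes and the pressure condition holds. Since the triple and $(P,V)$ were arbitrary, $\bm{f}_h$ is algebraically PEP.

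The only delicate point is making sure that the same tangent line $L_{\bm{\tau}_C}\Gamma_P$ appears in both intersections. This holds because the mean is built from the isobar through both arguments, and that isobar is $\Gamma_P$ in both cases. Degenerate cases need no separate treatment. If some states coincide, the intersection is the whole tangent line and the argmin picks a point on it, which still lies on $L_{\bm{\tau}_C}\Gamma_P$. If $V=0$, both sides vanish trivially.
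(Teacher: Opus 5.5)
Your proposal is correct and follows essentially the same route as the paper's proof. Consistency, well-definedness and the velocity condition come from the earlier propositions. \autoref{prop:densityenergyevolution} then reduces the pressure condition to $V\bigl(\tilde{\bm{\tau}}(\bm{U}_R,\bm{U}_C)-\tilde{\bm{\tau}}(\bm{U}_C,\bm{U}_L)\bigr)$. Both of these means lie on $L_{\bm{\tau}_C}\Gamma_P$, so their difference lies in $T_{\bm{\tau}_C}\Gamma_P=\ker(\left<\nabla p(\bm{\tau}_C),\cdot\right>)$ by \autoref{lem:tangentspaceorthogonality}; your explicit check via \autoref{prop:equilibriumset} that the first branch of \eqref{eq:massenergymean} applies just spells out the paper's ``by construction''.
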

\begin{proof}
    \autoref{prop:fluxconsistency} shows the numerical flux $\bm{f}_h$ is consistent, while \autoref{prop:velocityequilibrium} shows the numerical flux satisfies the velocity condition in \eqref{eq:apepconditions}. It remains to show that the pressure condition in \eqref{eq:apepconditions} holds. Let $P \in \mathbb{R}_+$, $V \in \mathbb{R}$ and $\bm{U}_R,\bm{U}_C,\bm{U}_L\in\mathcal{M}_{P,V}$ be arbitrary. Denote $\bm{\tau}_R := \bm{\tau}(\bm{U}_R)$, $\bm{\tau}_C := \bm{\tau}(\bm{U}_C)$ and $\bm{\tau}_L := \bm{\tau}(\bm{U}_L)$. \autoref{prop:densityenergyevolution} shows that $\bm{\tau}$ and the numerical flux satisfy:
    \begin{equation*}
        D\bm{\tau}(\bm{U}_C)\Delta \bm{f}_h(\bm{U}_R,\bm{U}_C,\bm{U}_L) = V(\tilde{\bm{\tau}}(\bm{U}_R, \bm{U}_C) - \tilde{\bm{\tau}}(\bm{U}_C, \bm{U}_L)).
    \end{equation*}
    By construction it holds that:
    \begin{equation*}
        \tilde{\bm{\tau}}(\bm{U}_R, \bm{U}_C) \in L_{\bm{\tau}_R}\Gamma_P \cap L_{\bm{\tau}_C}\Gamma_P \subset L_{\bm{\tau}_C}\Gamma_P, \qquad \tilde{\bm{\tau}}(\bm{U}_C, \bm{U}_L) \in L_{\bm{\tau}_C}\Gamma_P \cap L_{\bm{\tau}_L}\Gamma_P \subset L_{\bm{\tau}_C}\Gamma_P.
    \end{equation*}
    Thus, by definition of tangent lines, $\tilde{\bm{\tau}}(\bm{U}_R, \bm{U}_C) = \bm{\tau}_C + \bm{v}_R$ for some $\bm{v}_R \in T_{\bm{\tau}_C}\Gamma_P$ and similarly $\tilde{\bm{\tau}}(\bm{U}_C, \bm{U}_L) = \bm{\tau}_C + \bm{v}_L$ for some $\bm{v}_L \in T_{\bm{\tau}_C}\Gamma_P$. Subtracting the means then gives:
    \begin{equation*}
        D\bm{\tau}(\bm{U}_C)\Delta \bm{f}_h(\bm{U}_R,\bm{U}_C,\bm{U}_L) = V(\tilde{\bm{\tau}}(\bm{U}_R, \bm{U}_C) - \tilde{\bm{\tau}}(\bm{U}_C, \bm{U}_L)) = V(\bm{v}_R - \bm{v}_L) \in T_{\bm{\tau}_C}\Gamma_P.
    \end{equation*}
    However, by \autoref{lem:tangentspaceorthogonality} we have $T_{\bm{\tau}_C}\Gamma_P = \ker(\left<\nabla p(\bm{\tau}_C), \cdot \right>)$. Applying the chain rule then gives:
    \begin{align*}
        \nabla p(\bm{\tau}_C)^TD\bm{\tau}(\bm{U}_C)\Delta \bm{f}_h(\bm{U}_R,\bm{U}_C,\bm{U}_L) &= \left< \nabla p(\bm{\tau}_C), D\bm{\tau}(\bm{U}_C)\Delta \bm{f}_h(\bm{U}_R,\bm{U}_C,\bm{U}_L)\right> = 0,
    \end{align*}
    thus the numerical flux satisfies both identities in \autoref{dfn:algebraicpressureequilibriumpreservation}.
\end{proof}

\subsection{An existence characterization}
We are now ready to state our main result: a full characterization of the existence of consistent and algebraically PEP numerical-flux functions given any EOS possessing an admissible supercritical regime as defined in \autoref{dfn:admissiblesupercriticalregime}.

\begin{thm}[Algebraic-PEP-flux existence]\label{thm:pepexistence}
    Fix an admissible supercritical regime $\mathcal S_\tau$ and set $\mathcal S_U=\Theta(\mathcal S_\tau\times\mathbb R)$. A numerical flux $\bm{f}_h : \mathcal{S}_U \times \mathcal{S}_U \rightarrow \mathbb{R}^3$ that is \emph{consistent} in the sense of \autoref{dfn:consistency} and is \emph{algebraically pressure-equilibrium-preserving} in the sense of \autoref{dfn:algebraicpressureequilibriumpreservation} exists if and only if for any $P \in \mathbb{R}_+$ so that the isobar $\Gamma_P$ is nonempty any two tangent lines have a nonempty intersection, that is:
    \begin{equation*}
        L_{\bm{\tau}_1}\Gamma_P \cap L_{\bm{\tau}_2}\Gamma_P \neq \varnothing, \qquad \forall \bm{\tau}_1,\bm{\tau}_2 \in \Gamma_P.
    \end{equation*}
\end{thm}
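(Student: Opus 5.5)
The theorem is a biconditional, and both directions have essentially been established in the preceding section. The plan is therefore to assemble them, taking care that the hypotheses match exactly: the same fixed regime $\mathcal{S}_\tau$, the same $\mathcal{S}_U=\Theta(\mathcal{S}_\tau\times\mathbb{R})$, and the same notion of isobar $\Gamma_P$ inside $\mathcal{S}_\tau$.

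For the ``only if'' direction, suppose a consistent, algebraically PEP flux $\bm{f}_h$ on $\mathcal{S}_U\times\mathcal{S}_U$ exists. Then \autoref{lem:necessaryisobargeometry} applies verbatim and gives $L_{\bm{\tau}_1}\Gamma_P\cap L_{\bm{\tau}_2}\Gamma_P\neq\varnothing$ for every $P$ with $\Gamma_P$ nonempty and every $\bm{\tau}_1,\bm{\tau}_2\in\Gamma_P$.

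There is one point to check here. That lemma picks some $V\neq 0$ and argues on $\mathcal{M}_{P,V}$. It then uses \autoref{prop:equilibriumset} to see that every pair $\bm{\tau}_1,\bm{\tau}_2\in\Gamma_P$ is realized as $\bm{\tau}(\bm{U}_L),\bm{\tau}(\bm{U}_R)$ with $\bm{U}_L,\bm{U}_R\in\mathcal{M}_{P,V}$, namely by taking $\bm{U}=\Theta(\bm{\tau},V)$. This works because the velocity component of $\mathcal{S}_\tau\times\mathbb{R}$ is unrestricted, so $\Theta(\bm{\tau},V)\in\mathcal{S}_U$ for any $V$.

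For the ``if'' direction, assume the tangent-line condition holds. This is precisely \autoref{ass:tangentlineintersection}. \autoref{lem:isobartangentlinesufficiency} then shows that the flux \eqref{eq:flux}, with means \eqref{eq:massenergymean}, is well-defined, consistent and algebraically PEP, so a flux with the required properties exists.

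The only delicate step is well-definedness of the mean \eqref{eq:massenergymean}, which is deferred to \autoref{prop:fluxconsistency}. I would double-check three things there.

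\begin{enumerate}
\item When $p(\bm{\tau}_R)=p(\bm{\tau}_L)=P$, the set $\mathcal{I}$ is a nonempty closed affine set (a point or a line). Nonemptiness is exactly the assumption, and minimizing the Euclidean distance to $\bm{\tau}^*$ over a nonempty closed convex set has a unique solution.
\item On the diagonal, $\mathcal{I}=L_{\bm{\tau}}\Gamma_P$ contains $\bm{\tau}$. Since $\bm{\tau}^*(\bm{\tau},\bm{\tau})=\bm{\tau}$ by consistency of $\bm{\tau}^*$, the minimizer is $\bm{\tau}$ itself.
\item With value consistency of $\overline{v}$ and $\overline{p}$, the diagonal value of \eqref{eq:flux} is $\bm{f}(\bm{U})$.
\end{enumerate}

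Combining the two directions proves the equivalence. I expect no real obstacle beyond confirming that nothing in either lemma silently uses a stronger hypothesis, such as nonemptiness of $\mathcal{M}_{P,V}$ for $V\neq0$ or regularity of $\bm{f}_h$. Neither is needed, since the flux is only required to be a function and $\mathcal{M}_{P,V}\neq\varnothing$ whenever $\Gamma_P\neq\varnothing$.
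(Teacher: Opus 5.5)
Your proposal is correct and follows the paper's proof exactly: necessity is \autoref{lem:necessaryisobargeometry}, and sufficiency is \autoref{lem:isobartangentlinesufficiency}, with well-definedness and consistency of the constructed flux coming from \autoref{prop:fluxconsistency}. Your extra checks are sound and are precisely what those supporting results rely on: that $\mathcal{M}_{P,V}\neq\varnothing$ for every $V$ whenever $\Gamma_P\neq\varnothing$ (via $\Theta(\bm{\tau},V)$), and that the projection onto the nonempty affine set $\mathcal{I}$ is unique and returns $\bm{\tau}$ on the diagonal.
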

\begin{proof}
    Necessity is shown by \autoref{lem:necessaryisobargeometry}, while sufficiency is shown by \autoref{lem:isobartangentlinesufficiency}.
\end{proof}

Here, $\Theta$ is the diffeomorphism in \eqref{eq:conservativediffeomorphism}. This characterization shows that the existence of an algebraically PEP flux is a geometric property of $(\rho,\varepsilon)$-isobars of the EOS on an admissible supercritical regime. The existence of such a flux requires that the tangent lines at any two points on an isobar have nonempty intersections. Note that this should even hold for tangent lines at points in different connected components of the isobar as in \autoref{fig:overview}. Our novel characterization approaches pressure-equilibrium preservation from a purely algebraic perspective, it does not characterize the existence of a scheme with the PEP property as in \autoref{dfn:pressureequilibriumpreservation}. Existence of a scheme could be shown by requiring some regularity of the numerical flux. Since regularity requirements involve arbitrary pairs of points, not just pairs of points on an isobar, an existence characterization of a numerical scheme with the PEP property would therefore likely require consideration of all isobars simultaneously. The behavior of the scheme between points on different equilibrium sets is of no concern to whether the associated numerical flux can facilitate the PEP property and hence we have not considered regularity here. However, since the numerical flux of a PEP scheme is algebraically PEP the necessity of the condition in \autoref{thm:pepexistence} also holds for the existence of PEP schemes, proving the following.
\begin{cor}[PEP-scheme criterion]\label{cor:pepschemeexistence}
    Fix an admissible supercritical regime $\mathcal S_\tau$ and set $\mathcal S_U=\Theta(\mathcal S_\tau\times\mathbb R)$. If a conservative scheme of the form \eqref{eq:discretization} with a consistent flux and the PEP property as in \autoref{dfn:pressureequilibriumpreservation} exists then any pair of points on a nonempty isobar has intersecting tangent lines.
\end{cor}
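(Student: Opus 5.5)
The plan is to reduce Corollary~\ref{cor:pepschemeexistence} to the necessity half of Theorem~\ref{thm:pepexistence}, that is, to Lemma~\ref{lem:necessaryisobargeometry}. Assume a conservative scheme of the form \eqref{eq:discretization} exists, with a consistent flux $\bm{f}_h : \mathcal{S}_U\times\mathcal{S}_U\rightarrow\mathbb{R}^3$ and the PEP property of Definition~\ref{dfn:pressureequilibriumpreservation}. It then suffices to show that $\bm{f}_h$ is algebraically PEP in the sense of Definition~\ref{dfn:algebraicpressureequilibriumpreservation}. Lemma~\ref{lem:necessaryisobargeometry} then gives $L_{\bm{\tau}_1}\Gamma_P\cap L_{\bm{\tau}_2}\Gamma_P\neq\varnothing$ for all $\bm{\tau}_1,\bm{\tau}_2\in\Gamma_P$ on every nonempty isobar, which is exactly the claim.

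To show the algebraic PEP identities, fix $P\in\mathbb{R}_+$, $V\in\mathbb{R}$ and an arbitrary triple $\bm{U}_R,\bm{U}_C,\bm{U}_L\in\mathcal{M}_{P,V}$.

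\begin{itemize}
\item \textbf{Choice of initial data.} Use a grid state that places $\bm{U}_L,\bm{U}_C,\bm{U}_R$ in three consecutive cells, with all other cells filled by arbitrary states of $\mathcal{S}_U$. This is an admissible initial grid state.
\item \textbf{Existence of a classical solution.} Since the spatial discretization is a numerical scheme, it admits a local classical solution from this data. The PEP hypothesis then applies at the initial time $t_0$.
\item \textbf{Chain rule.} Because $v$ and $p\circ\bm{\tau}$ are $C^1$ on $\mathcal{S}_U$ (Proposition~\ref{prop:pressureregularity}, Proposition~\ref{prop:supercriticalconservativevariables}), we can differentiate along the solution. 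Combining the chain rule with \eqref{eq:discretization} at $t_0$ gives
\begin{equation*}
0=\dudt{v(\bm{U}_C)}=-\tfrac1h\nabla v(\bm{U}_C)^T\Delta\bm{f}_h(\bm{U}_R,\bm{U}_C,\bm{U}_L),
\end{equation*}
and analogously $0=-\tfrac1h\nabla p(\bm{\tau}(\bm{U}_C))^T D\bm{\tau}(\bm{U}_C)\Delta\bm{f}_h(\bm{U}_R,\bm{U}_C,\bm{U}_L)$. This is the computation displayed before Definition~\ref{dfn:algebraicpressureequilibriumpreservation}.
\item \textbf{Conclusion of this step.} Multiplying by $-h\neq 0$ yields \eqref{eq:apepconditions} for this triple. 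Since the triple was arbitrary, $\bm{f}_h$ is algebraically PEP.
\end{itemize}

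The only delicate point is the quantifier in Definition~\ref{dfn:pressureequilibriumpreservation}: the implication must be usable at the initial instant for arbitrary initial data. I would make this explicit by noting that the solution is $C^1$, so $\bm{U}_C$ is differentiable at $t_0$, and that the scheme's ODE holds pointwise there. Everything else is a direct invocation of Lemma~\ref{lem:necessaryisobargeometry} together with Proposition~\ref{prop:equilibriumset}. The latter guarantees that every pair $\bm{\tau}_1,\bm{\tau}_2\in\Gamma_P$ is realized as $\bm{\tau}(\bm{U}_L),\bm{\tau}(\bm{U}_R)$ for some $\bm{U}_L,\bm{U}_R\in\mathcal{M}_{P,V}$.
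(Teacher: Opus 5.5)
Your proposal is correct and follows essentially the same route as the paper. The paper argues, in the discussion preceding Definition~\ref{dfn:algebraicpressureequilibriumpreservation}, that the flux of any PEP scheme is algebraically PEP: it takes arbitrary equilibrium initial data and applies the chain rule to \eqref{eq:discretization}. It then obtains the corollary directly from the necessity half of Theorem~\ref{thm:pepexistence}, i.e.\ Lemma~\ref{lem:necessaryisobargeometry}. Your explicit remarks on the time $t_0$ and the $C^1$ regularity of $v$ and $p\circ\bm{\tau}$ simply make that argument more careful.
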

In other words, no PEP schemes of the considered form can exist on a domain if it contains two states on the same isobar whose tangent lines do not intersect.

\autoref{thm:pepexistence} states that consistent and algebraically PEP numerical-flux functions exist if and only if any two tangent lines of any nonempty isobar have a nonempty intersection. Since tangent lines are one-dimensional affine subspaces of $\mathbb{R}^2$ there are three cases for the intersection of any two tangent lines to the same isobar. We list these possibilities and their implication for the existence of algebraic PEP fluxes in \autoref{tab:tangentlineintersections}. 

\begin{table}[ht]
    \centering
    \begin{tabular}{p{0.18\textwidth} p{0.20\textwidth} p{0.25\textwidth} p{0.22\textwidth}}
        \hline
        \textbf{Case} & \textbf{Tangent-line geometry} & \textbf{Intersection} & \textbf{Implication for PEP existence} \\
        \hline
        Unique intersection
        &
        The tangent lines are not parallel,
        \[
            L_{\bm{\tau}_1}\Gamma_P \nparallel L_{\bm{\tau}_2}\Gamma_P .
        \]
        &
        \[
            L_{\bm{\tau}_1}\Gamma_P
            \cap
            L_{\bm{\tau}_2}\Gamma_P
            =
            \{\widetilde{\bm{\tau}}\}.
        \]
        &
        The condition of \autoref{thm:pepexistence} is satisfied for this pair. The compatible intersection point is unique.
        \\[1ex]
        \hline

        Coincident tangent lines
        &
        The two tangent lines coincide,
        \[
            L_{\bm{\tau}_1}\Gamma_P
            =
            L_{\bm{\tau}_2}\Gamma_P .
        \]
        &
        \[
        \begin{aligned}
            L_{\bm{\tau}_1}\Gamma_P \cap L_{\bm{\tau}_2}\Gamma_P &= L_{\bm{\tau}_1}\Gamma_P \\
            &= L_{\bm{\tau}_2}\Gamma_P.
        \end{aligned}
        \]
        &
        The condition of \autoref{thm:pepexistence} is satisfied for this pair. The compatible intersection point is nonunique.
        \\[1ex]
        \hline

        Parallel distinct tangent lines
        &
        The tangent lines are parallel but do not coincide,
        \[
        \begin{gathered}
            L_{\bm{\tau}_1}\Gamma_P
            \parallel
            L_{\bm{\tau}_2}\Gamma_P \, \text{ and} \\
            L_{\bm{\tau}_1}\Gamma_P
            \neq
            L_{\bm{\tau}_2}\Gamma_P .
        \end{gathered}
        \]
        &
        \[
            L_{\bm{\tau}_1}\Gamma_P
            \cap
            L_{\bm{\tau}_2}\Gamma_P
            =
            \varnothing .
        \]
        &
        The condition of \autoref{thm:pepexistence} is violated. Hence, no consistent and algebraically PEP flux exists on an admissible supercritical regime containing this pair and no PEP schemes exist according to \autoref{cor:pepschemeexistence}.
        \\
        \hline
    \end{tabular}
    \caption{Possible intersections of two tangent lines to the same isobar.}
    \label{tab:tangentlineintersections}
\end{table}

\begin{table}[ht]
    \centering
    \begin{tabular}{p{0.15\textwidth} p{0.24\textwidth} p{0.27\textwidth} p{0.22\textwidth}}
        \hline
        \textbf{Case}
        &
        \textbf{Geometric condition}
        &
        \textbf{Algebraic condition}
        &
        \textbf{Consequence}
        \\
        \hline

        Unique intersection
        &
        The tangent lines are nonparallel:
        \[
            L_{\bm{\tau}_1}\Gamma_P \nparallel L_{\bm{\tau}_2}\Gamma_P .
        \]
        &
        Different tangent-line slopes:\par
        \centering
        \(\displaystyle
            \Delta\varepsilon_{\rho}\neq 0.
        \)
        \par
        \raggedright
        Equation \eqref{eq:chan1} uniquely determines:\par
        \centering
        \(\displaystyle
            \tilde{\rho}
            =
            \frac{
                \Delta(\rho\varepsilon_{\rho})
                -
                \Delta\varepsilon
            }{
                \Delta\varepsilon_{\rho}
            }.
        \)
        \par
        \raggedright
        Then \eqref{eq:chan2} uniquely determines
        $\tilde{\varepsilon}$.
        &
        There exists a unique compatible pair:
        \[
            (\tilde{\rho},\tilde{\varepsilon})
            \in
            L_{\bm{\tau}_1}\Gamma_P
            \cap
            L_{\bm{\tau}_2}\Gamma_P .
        \]
        \\[1.5ex]
        \hline

        Coincident tangent lines
        &
        The tangent lines have equal slope and coincide:
        \[
            L_{\bm{\tau}_1}\Gamma_P
            =
            L_{\bm{\tau}_2}\Gamma_P .
        \]
        &
        Same tangent-line slope and same $(\rho=0)$-intercept:\par
        \centering
        \(\displaystyle
        \begin{gathered}
            \Delta\varepsilon_{\rho}=0,
            \\
            \Delta(\rho\varepsilon_{\rho})
            -
            \Delta\varepsilon
            =0,
        \end{gathered}
        \)
        \par
        \raggedright
        so \eqref{eq:chan1} reduces to:\par
        \centering
        \(\displaystyle
            0=0.
        \)
        \par
        \raggedright
        Equation \eqref{eq:chan2} becomes:\par
        \centering
        \(\displaystyle
            \tilde{\varepsilon}
            =
            \overline{\varepsilon}
            +
            \varepsilon_{\rho}\left(
                \tilde{\rho}-\overline{\rho}
            \right).
        \)
        \par
        \raggedright
        &
        There are infinitely many compatible pairs. They are precisely the points on the common tangent line:
        \[
            (\tilde{\rho},\tilde{\varepsilon})
            \in
            L_{\bm{\tau}_1}\Gamma_P
            =
            L_{\bm{\tau}_2}\Gamma_P .
        \]
        \\[1.5ex]
        \hline

        Parallel distinct tangent lines
        &
        The tangent lines are parallel but do not coincide:
        \[
            \begin{gathered}
            L_{\bm{\tau}_1}\Gamma_P
            \parallel
            L_{\bm{\tau}_2}\Gamma_P \, \text{ and} \\
            L_{\bm{\tau}_1}\Gamma_P
            \neq
            L_{\bm{\tau}_2}\Gamma_P .
        \end{gathered}
        \]
        &
        Same tangent-line slope, but different intercept at $\rho=0$, thus:\par
        \centering
        \(\displaystyle
            \Delta\varepsilon_{\rho}=0,
        \)
        \par
        \raggedright
        while:\par
        \centering
        \(\displaystyle
            \Delta(\rho\varepsilon_{\rho})
            -
            \Delta\varepsilon
            \neq 0.
        \)
        \par
        \raggedright
        Hence \eqref{eq:chan1} cannot be satisfied.
        &
        No compatible pair
        $(\tilde{\rho},\tilde{\varepsilon})$
        exists. Equivalently:
        \[
            L_{\bm{\tau}_1}\Gamma_P
            \cap
            L_{\bm{\tau}_2}\Gamma_P
            =
            \varnothing .
        \]        
        \\
        \hline
    \end{tabular}
    \caption{Geometric and algebraic characterization of the possible intersections of tangent lines at two distinct states $\bm{\tau}_1,\bm{\tau}_2 \in \Gamma_P$, assuming that $\varepsilon_{\rho}$ is defined at both states.}
    \label{tab:chanintersectioncases}
\end{table}

\subsubsection{Comparison with Chan et al. \cite{channodal}}\label{ssec:comparisonchan}
Recently Chan et al. \cite{channodal} derived an algebraic compatibility condition for the density and internal-energy-density means $(\tilde{\rho},\tilde{\varepsilon})$ of the numerical flux of a formally PEP numerical scheme for arbitrary EOS. This condition was obtained by algebraically characterizing the equivalent three-point PEP compatibility condition in \cite{terashimaapproximately} using a technique similar to that used to characterize the entropy-conservation condition in \cite{artianoaffordable}. It is formulated in terms of the derivative $\varepsilon_{\rho}(\rho) = \left.(\partial \varepsilon/\partial \rho)\right|_p$ of the internal-energy density $\varepsilon$ with respect to density $\rho$ at some constant pressure $P \in \mathbb{R}_+$. We now show that, whenever this derivative exists, their compatibility condition has a direct geometric interpretation. Namely, a mean $(\tilde{\rho},\tilde{\varepsilon})$ satisfying their compatibility condition is precisely a point belonging to the intersection of the tangent lines of the corresponding isobar. Consequently, the compatibility condition of \cite{channodal} is a coordinate representation of the geometric condition appearing in \autoref{thm:pepexistence}. 

The derivative $\varepsilon_{\rho}(\rho)$ can be defined in terms of pressure derivatives as follows:
\begin{equation}
    \varepsilon_{\rho}(\rho) = -\frac{p_{\rho}(\rho,\varepsilon(\rho))}{p_{\varepsilon}(\rho,\varepsilon(\rho))}, \qquad P = p(\rho, \varepsilon(\rho)),
    \label{eq:energyderivative}
\end{equation}
where $\varepsilon(\rho)$ is a graph of the isobar $\Gamma_P$ in $(\rho,\varepsilon)$-space. The expression follows from taking the derivative with respect to $\rho$ of the right equation and solving for $\varepsilon_{\rho}(\rho)$. Now, for any function $a$ on $\mathcal{S}_{\tau}$, denote $\Delta a(\bm{\tau}_1, \bm{\tau}_2) := a(\bm{\tau}_2) - a(\bm{\tau}_1)$ for any $\bm{\tau}_1,\bm{\tau}_2 \in \mathcal{S}_{\tau}$ and similarly for functions on $\mathcal{S}_U$. The PEP compatibility condition of \cite{channodal} is then given as:
\begin{align}
    \tilde{\rho} \Delta \varepsilon_{\rho}&= \Delta (\rho \varepsilon_{\rho}) - \Delta \varepsilon, \label{eq:chan1}\\
    \tilde{\varepsilon}&= \overline{\varepsilon} + \tilde{\rho}\,\overline{\varepsilon_{\rho}} - \overline{\rho \varepsilon_{\rho}}, \label{eq:chan2}
\end{align}
for all $\bm{\tau}_1,\bm{\tau}_2 \in \Gamma_P$, where $\overline{a}$ is the arithmetic mean. 

Assume that $\rho$ is indeed a valid coordinate to parameterize an isobar and thus that $\varepsilon(\rho)$ is the graph of an isobar. Then $\varepsilon_{\rho}(\rho)$ is the slope of the tangent line at $(\rho,\varepsilon(\rho))$ for this isobar. Moreover, if $\bm{\tau}_1,\bm{\tau}_2 \in \Gamma_P$, tangent lines at these points can be defined as:
\begin{align*}
    L_{\bm{\tau}_1}\Gamma_P &= \{ (\rho, \varepsilon) \in \mathbb{R}^2 \, :\, \varepsilon = \varepsilon_{\rho}(\rho_1)(\rho - \rho_1) + \varepsilon_1  \}, \\
    L_{\bm{\tau}_2}\Gamma_P &= \{ (\rho, \varepsilon) \in \mathbb{R}^2 \, :\, \varepsilon = \varepsilon_{\rho}(\rho_2)(\rho - \rho_2) + \varepsilon_2  \},
\end{align*}
where $(\rho_1,\varepsilon_1) := \bm{\tau}_1$ and $(\rho_2,\varepsilon_2) := \bm{\tau}_2$. A point $(\rho_{\mathrm{int}},\varepsilon_{\mathrm{int}}) \in \mathbb{R}^2$ is in the intersection of the tangent lines if and only if:
\begin{equation*}
    \varepsilon_{\mathrm{int}} = \varepsilon_{\rho}(\rho_1)(\rho_{\mathrm{int}} - \rho_1) + \varepsilon_1 = \varepsilon_{\rho}(\rho_2)(\rho_{\mathrm{int}} - \rho_2) + \varepsilon_2.
\end{equation*}
The second equation can be rearranged to obtain \eqref{eq:chan1} evaluated between $\bm{\tau}_1$ and $\bm{\tau}_2$:
\begin{equation*}
    \rho_{\mathrm{int}}\Delta \varepsilon_{\rho}(\bm{\tau}_2,\bm{\tau}_1) = \Delta(\rho \varepsilon_{\rho})(\bm{\tau}_2,\bm{\tau}_1) - \Delta \varepsilon(\bm{\tau}_2,\bm{\tau}_1).
\end{equation*}
Furthermore, we can compute the corresponding $\varepsilon$-coordinate at this intersection by evaluating either the parameterization of $L_{\bm{\tau}_1}\Gamma_P$ or $L_{\bm{\tau}_2}\Gamma_P$:
\begin{align*}
    \varepsilon_{\mathrm{int}} &= \varepsilon_{\rho}(\rho_1)(\rho_{\mathrm{int}} - \rho_1) + \varepsilon_1, \\
    \varepsilon_{\mathrm{int}} &= \varepsilon_{\rho}(\rho_2)(\rho_{\mathrm{int}} - \rho_2) + \varepsilon_2,
\end{align*}
and \eqref{eq:chan2} evaluated between $\bm{\tau}_1$ and $\bm{\tau}_2$ can then be obtained by averaging:
\begin{align*}
    \varepsilon_{\mathrm{int}} &= \frac{\varepsilon_{\mathrm{int}} + \varepsilon_{\mathrm{int}}}{2} \\
    &= \tfrac12 (\varepsilon_{\rho}(\rho_1)(\rho_{\mathrm{int}} - \rho_1) + \varepsilon_1) + \tfrac12 (\varepsilon_{\rho}(\rho_2)(\rho_{\mathrm{int}} - \rho_2) + \varepsilon_2) \\[0.7em]
    &= \overline{\varepsilon}(\bm{\tau}_2,\bm{\tau}_1) + \rho_{\mathrm{int}}\overline{\varepsilon_{\rho}}(\bm{\tau}_2,\bm{\tau}_1) - \overline{\rho \varepsilon_{\rho}}(\bm{\tau}_2,\bm{\tau}_1).
\end{align*}
Thus provided $\varepsilon_{\rho}$ is defined at $\bm{\tau}_1,\bm{\tau}_2 \in \Gamma_P$, a pair $(\tilde{\rho},\tilde{\varepsilon})$ satisfies the compatibility condition of Chan et al. \cite{channodal} if and only if it belongs to $L_{\bm{\tau}_1}\Gamma_P \cap L_{\bm{\tau}_2}\Gamma_P$. In turn, if $\varepsilon_{\rho}$ is defined, this implies that the solvability of the compatibility conditions by Chan et al. \cite{channodal} for all pairs of points on an isobar $\Gamma_P$ is equivalent to the nonemptiness of tangent-line intersections $L_{\bm{\tau}_1}\Gamma_P \cap L_{\bm{\tau}_2}\Gamma_P$ for all these pairs i.e.\ the geometric condition of \autoref{thm:pepexistence}. The geometric cases are listed in \autoref{tab:tangentlineintersections}. When $\varepsilon_{\rho}$ exists, the equivalence above gives the corresponding algebraic conditions in \autoref{tab:chanintersectioncases}.

These equivalences rely on the existence of $\varepsilon_{\rho}(\rho)$. However, from basic thermodynamic principles alone this does not follow necessarily. Namely, for the term $p_{\varepsilon}$ in \eqref{eq:energyderivative} we can derive:
\begin{equation*}
    p_{\varepsilon}(\Phi(\bm{\eta})) = - \frac{e_{\nu\sigma}(\bm{\eta}) \nu}{T(\bm{\eta})}, 
\end{equation*}
for $\bm{\eta} := (\nu,\sigma) \in \mathcal{S}_{\eta}$ where $\Phi$ is defined in \eqref{eq:Phi}. While for all $\bm{\eta}\in\mathcal{S}_{\eta}$ it holds that $\nu > 0$ by \eqref{eq:thermodynamicstatespace} and $T(\bm{\eta}) > 0$ by the monotonicity condition in \autoref{dfn:admissiblesupercriticalregime}, the sign of $e_{\nu\sigma}$ is not constrained on an admissible supercritical regime $\mathcal{S}_{\eta}$. Hence, $p_{\varepsilon}$ can potentially vanish so that the expression $\varepsilon_{\rho}(\rho)$ is not defined. Nonetheless, for a lot of fluids of engineering interest this is not the case \cite{mausbachcomparative}. Counterexamples do exist, one such fluid is supercooled water. Our geometric approach does not require $\rho$ to be a valid isobar coordinate, which is why we use it throughout the analysis.

\subsubsection{Accuracy issues of PEP schemes}\label{ssec:accuracyissues}
Assuming $\varepsilon_{\rho}$ is defined and $V \neq 0$, \autoref{tab:chanintersectioncases} shows that on a $(P,V)$-equilibrium any consistent and algebraically PEP numerical flux evaluated between points with nonparallel isobar tangent lines forces a unique pair of mass-density and internal-energy-density means. Indeed, it is shown in \cite{channodal} that, in this case, their numerical flux reduces to that of \cite{coppolapressure} and the single-component version of \cite{degrendeleconstruction}. Moreover, at nonzero common velocity, the uniqueness of these means for consistent and algebraically PEP fluxes allows us to make some general comments on the treatment of singularities and its effect on the accuracy of these means. One possible treatment of singularities proposed in \cite{channodal} based on suggestions in \cite{coppolapressure} is to switch to a nonsingular numerical flux that need not satisfy algebraic PEP when:
\begin{equation}
    |\varepsilon_{\rho}(\rho_R) - \varepsilon_{\rho}(\rho_L)| \leq C \epsilon_{\mathrm{mach}},
    \label{eq:absoluteswitch}
\end{equation}
where $\bm{\tau}_R = (\rho_R,\varepsilon_R)$ and $\bm{\tau}_L = (\rho_L,\varepsilon_L)$ are the thermodynamic states corresponding to the right and left arguments of a numerical flux, $\epsilon_{\mathrm{mach}} \in \mathbb{R}_+$ is the machine precision and $C \in \mathbb{R}_+$ is a user-specified constant, for example $C = 100$ in \cite{channodal}. This switch can be slightly problematic because it depends on the choice of units. Thus, expressing $\varepsilon_{\rho}$ in $\mathrm{kJ}\, \mathrm{kg}^{-1}$ has different switching behavior compared to using $\mathrm{MJ}\, \mathrm{kg}^{-1}$. A reasonable alternative is a switch based on the relative slope difference:
\begin{equation}
    |\varepsilon_{\rho}(\rho_R) - \varepsilon_{\rho}(\rho_L)| \leq C \epsilon_{\mathrm{mach}}\max(|\varepsilon_{\rho}(\rho_R) |,| \varepsilon_{\rho}(\rho_L)|).
    \label{eq:singularityswitch}
\end{equation} 
Since $\varepsilon_{\rho}$, when it is defined, is the slope of the tangent line, both conditions \eqref{eq:singularityswitch} and \eqref{eq:absoluteswitch} prevent evaluating the PEP numerical flux between points $\bm{\tau}_L, \bm{\tau}_R$ at which the tangent lines are parallel. Hence, the conditions do indeed prevent singular pairs from being evaluated as for these pairs \autoref{tab:chanintersectioncases} shows that their tangent lines are parallel and noncoincident. However, since usually $C \epsilon_{\mathrm{mach}} \ll 1$, states with nearly parallel tangent lines are still allowed by either switch. In this case, their intersection $\tilde{\bm{\tau}}$ could lie far from both states $\bm{\tau}_L,\bm{\tau}_R$. At nonzero common velocity, the unique tangent-line intersection determines the normalized mass-density and internal-energy density means of every consistent, algebraically PEP flux. Consequently, these means can lie far outside the range of the input states or even be thermodynamically inadmissible, for example when $\tilde{\rho}<0$. We will demonstrate this poor accuracy in more detail in the next section.

\begin{rmk}
    Both switches \eqref{eq:singularityswitch} and \eqref{eq:absoluteswitch} also activate when tangent lines are parallel and coincident, e.g. for the ideal-gas law $p = (\gamma-1)\varepsilon$. However, in these situations \autoref{tab:chanintersectioncases} shows that infinitely many compatible means exist. Whether switching preserves algebraic PEP then depends on the replacement flux.
\end{rmk}

\section{Experiments}\label{sec:experiments}
We will now put our existence characterization into practice. Although we have analyzed the algebraic PEP property of \autoref{dfn:algebraicpressureequilibriumpreservation}, we can use \autoref{cor:pepschemeexistence} to make statements about the existence of PEP schemes satisfying \autoref{dfn:pressureequilibriumpreservation}. Our experiments will show that for both practical and more realistic real-gas EOS for supercritical fluids consistent, conservative and PEP schemes cannot be expected to exist without singularities for reasonable definitions of the supercritical regime. Additionally, PEP schemes for supercritical fluids are usually claimed to be beneficial especially across supercritical gas-like/liquid-like material interfaces where they prevent pressure oscillations. We will show that, in fact, current conservative PEP schemes using switches of the form \eqref{eq:singularityswitch} can also cause quite severe stability issues across these interfaces regardless of whether these interfaces are resolved or under-resolved. Thus enforcing the PEP property can come at the price of accuracy and stability. Our novel geometric perspective will make the mechanism behind these issues clear.

In particular, we will carry out three experiments for two molecules using different EOS that are of practical relevance. Specifically, we will investigate supercritical carbon dioxide ($\mathrm{CO}_2$) and supercritical nitrogen ($\mathrm{N}_2$). For carbon dioxide we use the high-accuracy Span--Wagner reference EOS \cite{SpanWagner1996}, while for nitrogen we use the more practical Peng--Robinson EOS \cite{PengRobinson1976,BellJager2016}. In our experiments all thermodynamic states, for both Span--Wagner using the HEOS implementation and Peng--Robinson, are evaluated using CoolProp 8.0.0 \cite{Bell2014CoolProp}. Hence, we refer to \cite{Bell2014CoolProp} for more information on the specific data used in our experiments. The code to reproduce all experiments is made available at \cite{reprorepo}. In our first experiment, we will give a broad impression of the existence of PEP schemes for the stated EOS-molecule pairs. In our second experiment we will do a more detailed numerical search for singular pairs along a supercritical isobar passing close to the critical point where thermodynamic behavior is usually most extreme. In our third experiment we will demonstrate how the geometry of our chosen real-gas EOS together with the PEP property can force the use of poor thermodynamic means resulting in quite severe numerical stability issues for both resolved and under-resolved flows.

\begin{figure}[!p]
    \centering

    \includegraphics[width=0.8\linewidth]
        {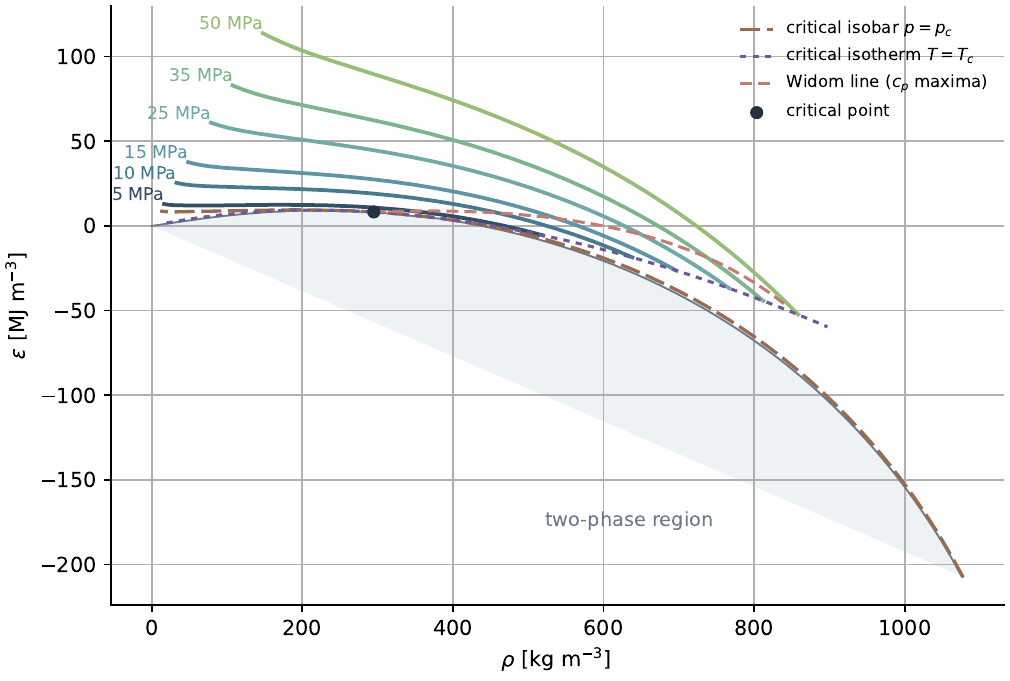}
    \caption{Supercritical nitrogen isobars in $(\rho,\varepsilon)$-coordinates
    computed using the Peng--Robinson EOS. The critical isobar and isotherm,
    critical point, two-phase region, and Widom line are shown for reference.}
    \label{fig:isobarsn2}

    \vspace{1em}

    \includegraphics[width=0.8\linewidth]
        {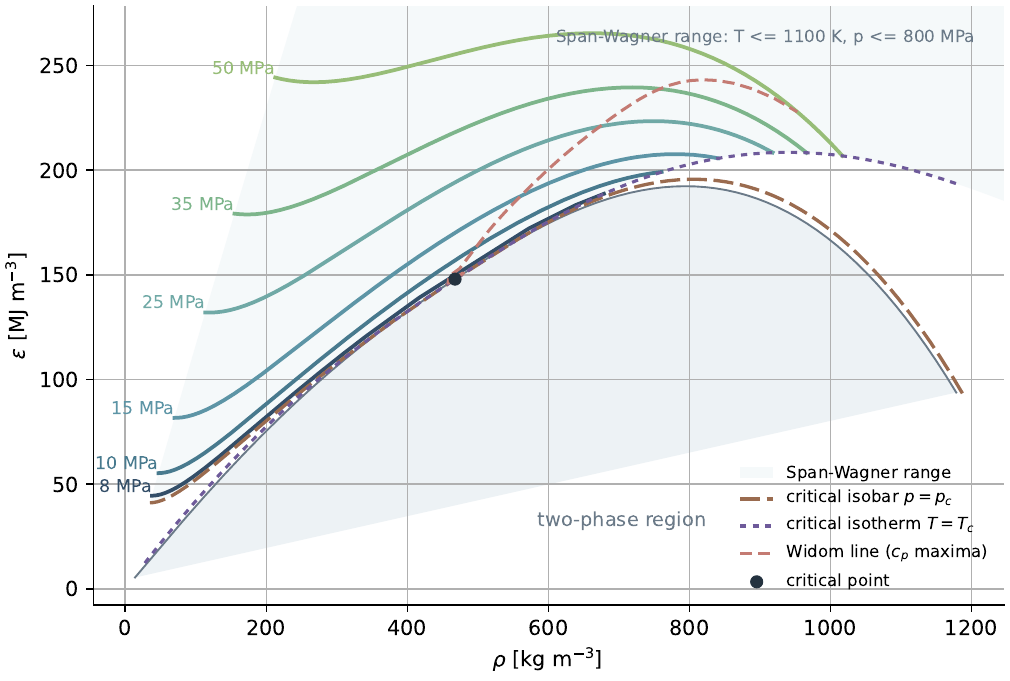}
    \caption{Supercritical carbon-dioxide isobars in $(\rho,\varepsilon)$-coordinates
    computed using the Span--Wagner EOS. The critical isobar and isotherm,
    critical point, two-phase region, Widom line, and admissible Span--Wagner
    range are shown for reference.}
    \label{fig:isobarsco2}

\end{figure}

\subsection{General isobar geometry}
In this experiment we provide an initial broad impression on the existence of PEP schemes for realistic EOS. As \autoref{cor:pepschemeexistence} asserts that no PEP scheme can exist on some domain if there exist pairs of points on an isobar with parallel but noncoincident tangent lines on that domain, we can check the existence visually by plotting a number of isobars in $(\rho,\varepsilon)$-coordinates. Nonexistence can then be verified by inspection if such a pair of points is present on an isobar e.g.\ before and after an inflection point of the isobar.

The results of this experiment are shown in \autoref{fig:isobarsn2} and \autoref{fig:isobarsco2}. Here, we visualize representative supercritical isobars in density-energy coordinates for nitrogen and carbon dioxide. For nitrogen, isobars are evaluated at \(P=5,10,15,25,35,\) and \(50\) MPa and plotted up to \(T=1000\) K; for carbon dioxide, we use \(P=8,10,15,25,35,\) and \(50\) MPa and \(T\leq1100\) K. Each isobar is sampled in temperature and mapped to density-energy coordinates. In addition to the supercritical isobars, the figures contain the critical point, the complete critical isobar \(p=p_{\mathrm{crit}}\), the complete critical isotherm \(T=T_{\mathrm{crit}}\), and the two-phase region obtained from the saturated-liquid and saturated-vapor lines. In the figures, the supercritical regime can be understood as the intersection of the regions above the critical isobar and critical isotherm. The separation between supercritical liquid-like and gas-like behavior is referred to as the Widom line \cite{guardonenonideal}. The Widom line is defined as the line of local maxima of the isobaric heat capacity \(c_p(\bm{\tau})\) connected to the critical point and is included to indicate the transition between liquid-like and gas-like supercritical states. For the carbon-dioxide reference EOS, the plotted EOS domain is additionally restricted to the published Span--Wagner admissible range \(T\leq1100\) K and \(p\leq800\) MPa, with states beyond the melting line excluded.

Visual inspection of \autoref{fig:isobarsn2} and \autoref{fig:isobarsco2} shows that supercritical real-gas isobars in density-energy coordinates are curved and can have inflections and thus that there is potential for parallel noncoincident isobar tangent lines. Especially for supercritical isobars with pressures close to the critical pressure $p_{\mathrm{crit}}$ there seem to be gas-like states with low mass density $\rho$ and liquid-like states beyond the Widom line with high $\rho$ that have parallel tangent lines. According to \autoref{tab:chanintersectioncases} for these pairs \autoref{cor:pepschemeexistence} asserts no PEP schemes exist and without the switch \eqref{eq:singularityswitch} the PEP schemes of \cite{channodal, coppolapressure, degrendeleconstruction} would be singular. Whereas for nitrogen in \autoref{fig:isobarsn2} the curvature and thus these singular pairs are more subtle, for carbon dioxide these pairs quite clearly exist.

\subsection{Numerical isobar search}
The previous experiment showed that for isobars with pressures close to the critical pressure $p_{\mathrm{crit}}$ the nonexistence of PEP schemes at gas-like/liquid-like interfaces is especially likely, hence we will inspect these isobars more closely. More specifically, we will perform a numerical search for pairs of points on these isobars with parallel and noncoincident tangent lines. This will give more conclusive numerical evidence for the nonexistence of PEP schemes for the EOS-molecule pairs that we consider. In particular, we are interested in transcritical pairs for which the two states lie on opposite sides of the Widom line, so that one state is liquid-like and the other gas-like because PEP schemes are specifically designed to treat material interfaces between such states.

\begin{figure}
    \centering
    \includegraphics[width=\linewidth]{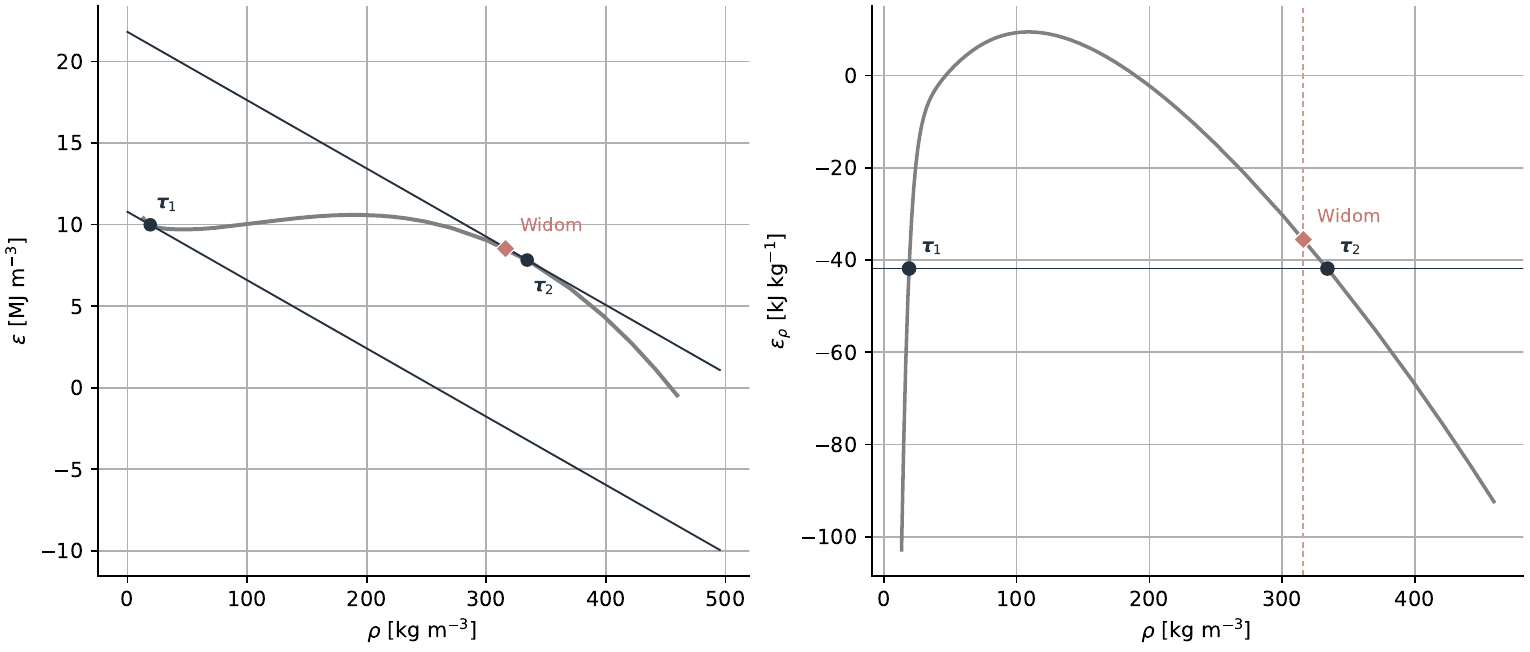}
    \caption{Parallel noncoincident tangent lines on the $P=4$ MPa nitrogen isobar computed using the Peng--Robinson EOS. Left: the isobar and tangent lines at the gas-like state $\bm{\tau}_1$ and liquid-like state $\bm{\tau}_2$. Right: the tangent slope $\varepsilon_\rho$ along the isobar, showing that the two states have equal tangent slope. The Widom-line intersection is indicated in both panels.}
    \label{fig:parallelpairsn2}
\end{figure}

\begin{figure}
    \centering
    \includegraphics[width=\linewidth]{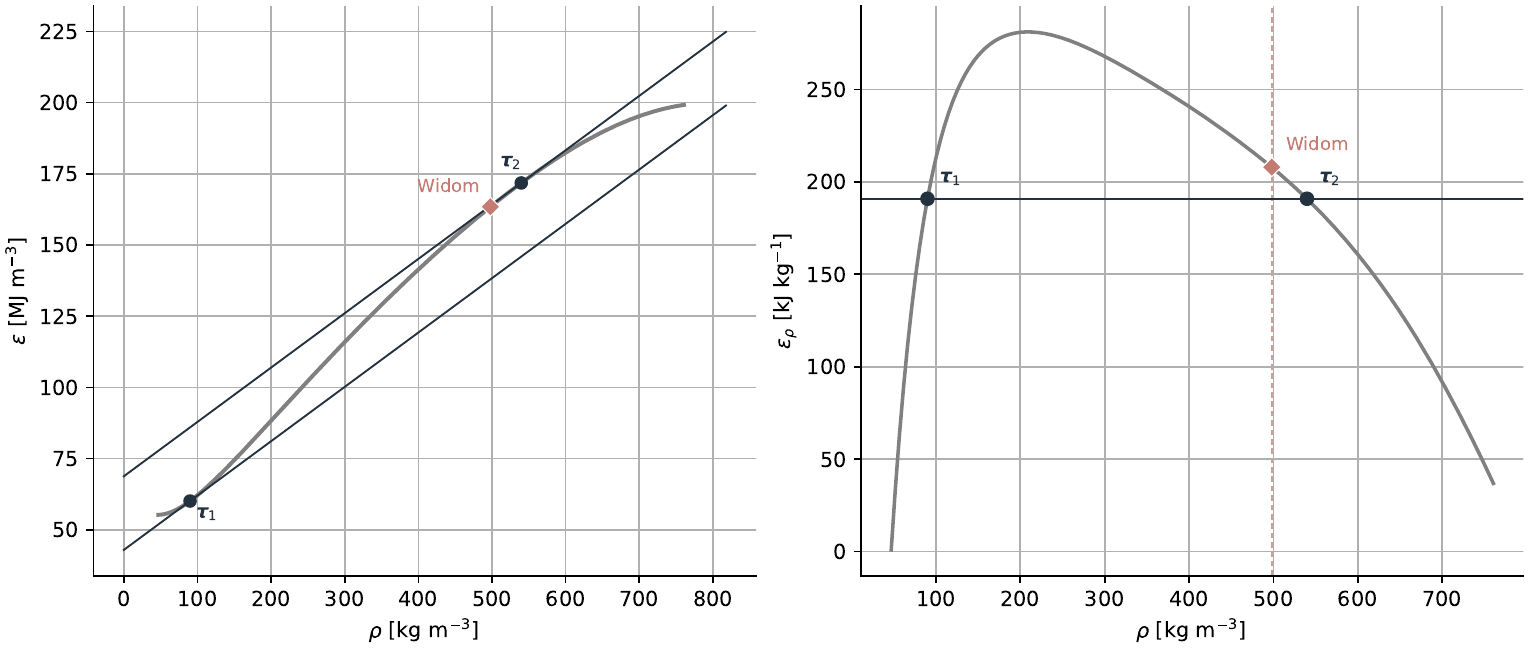}
    \caption{Parallel noncoincident tangent lines on the $P=10$ MPa carbon-dioxide isobar computed using the Span--Wagner EOS. Left: the isobar and tangent lines at the gas-like state $\bm{\tau}_1$ and liquid-like state $\bm{\tau}_2$. Right: the tangent slope $\varepsilon_\rho$ along the isobar, showing that the two states have equal tangent slope. The Widom-line intersection is indicated in both panels.}
    \label{fig:parallelpairsco2}
\end{figure}

The results of this experiment are shown in \autoref{fig:parallelpairsn2} and \autoref{fig:parallelpairsco2}. The nitrogen experiment uses the Peng--Robinson EOS at \(P=4\) MPa, with the first state prescribed by \(\rho_1=19\ {\rm kg\,m^{-3}}\). Solving \(\rho(T)-\rho_1=0\) along the isobar gives \(T_1=699.7376836\) K. The carbon-dioxide experiment uses the HEOS Span--Wagner EOS at \(P=10\) MPa and prescribes \(\rho_1=90\ {\rm kg\,m^{-3}}\), giving \(T_1=599.6671554\) K. For each case, the isobar is first sampled at \(1400\) temperatures and the tangent slope \(\varepsilon_\rho=(\partial\varepsilon/\partial\rho)_p\) is evaluated from thermodynamic derivatives supplied by CoolProp. The sampled function \(\varepsilon_\rho(T)-\varepsilon_\rho(T_1)\) is then searched for sign changes away from the selected state, and each resulting bracket is refined using Brent's bracketed root-finding algorithm as implemented by scipy.optimize.brentq. This procedure gives an equal-slope companion point for nitrogen at \(\rho_2=334.1374674\ {\rm kg\,m^{-3}}\), \(T_2=129.3833791\) K, with \(\varepsilon_\rho=-41.8476990479\ {\rm kJ\,kg^{-1}}\) and a tangent-line-intercept difference of \(11.02730218\ {\rm MJ\,m^{-3}}\). For carbon dioxide the corresponding companion point is located at \(\rho_2=539.7113714\ {\rm kg\,m^{-3}}\), \(T_2=316.7150036\) K, with common tangent slope \(\varepsilon_\rho=190.857739525\ {\rm kJ\,kg^{-1}}\) and an intercept difference of \(25.85913376\ {\rm MJ\,m^{-3}}\). The resulting figures, \autoref{fig:parallelpairsn2} and \autoref{fig:parallelpairsco2}, show the two tangent lines in \((\rho,\varepsilon)\)-coordinates together with the tangent-slope function \(\varepsilon_\rho(\rho)\). The intersection of the selected isobar with the Widom line is included to indicate that the two selected states belong to opposite sides of the transcritical pseudo-phase transition.

This practically shows that \emph{no consistent, conservative and PEP schemes of the form \eqref{eq:discretization} exist} for nitrogen using the Peng--Robinson EOS and carbon dioxide using the Span--Wagner EOS if the respective supercritical isobars are included in the domain of interest of the scheme. 

\subsection{Supercritical material interfaces}
We have demonstrated that consistent, conservative PEP schemes do not always exist for real-gas applications. However, these schemes can still be `fixed' when used together with switches \eqref{eq:singularityswitch} to remove singularities. We refer to such a scheme as a conditional PEP scheme\footnote{Switches may cause loss of regularity of the spatial discretization \eqref{eq:discretization} and do not resolve regularity issues other than flux singularities, hence the semi-discretization \eqref{eq:discretization} may not always admit solutions. Thus we use the word `scheme' loosely throughout this section.}. In this case the resulting scheme can still suffer from stability issues. This happens even when the switches are not active and is caused by the tangent-line-intersection mechanism explained in \autoref{ssec:accuracyissues}. 

In this experiment we will analyze this mechanism numerically using two fully discrete cases based on supercritical material interfaces. The first case mimics an under-resolved material interface so that at the given grid resolution the physically smooth interface behaves essentially as a contact discontinuity. We will show that even if the PEP-mean construction using tangent-line intersections is quite far from singular, the construction can still lead to poor or even thermodynamically inadmissible means. Using these means can in turn cause strong spurious oscillations in mass density under practical $\mathrm{CFL}$ numbers and can even lead to negative mass densities in conditions far from a vacuum resulting in failure of the scheme. Importantly, this behavior is not confined to relatively small neighborhoods of an exact singularity, but can persist over extended portions of an isobar. The second case is a resolved supercritical material interface. This experiment will show that, remarkably, even when supercritical interfaces are resolved and smooth, conditional PEP schemes can encounter numerical problems due to the same tangent-line-intersection mechanism. We will compare with other nondissipative stable schemes for real gases. We will focus on a single fully discrete time step for both cases as the numerical instabilities occur immediately. Throughout, we will denote the updated state using a superscript $^+$ and the initial state without a superscript e.g.\ $\rho_j^+$ is the updated mass density at some grid cell $j$ while $\rho_j$ is the corresponding initial value.

In general, the experiments in this section show that it is not enough to merely switch to a different scheme when the PEP means become exactly singular. Near-singular configurations can already produce severely inaccurate or thermodynamically inadmissible PEP means, even when the material interface is smooth and well resolved.

\subsubsection{Under-resolved material interfaces}
This experiment tests the PEP-mean construction for under-resolved material interfaces under $(P,V)$-equilibrium in a fully discrete setting under a moderate $\mathrm{CFL}$ number. Some nonsingular under-resolved material interfaces will be constructed by perturbing an interface for which PEP means are singular. To deal with possible singularities we will use the switch proposed in \eqref{eq:singularityswitch}. The performance of the conditional PEP scheme is compared against a reference scheme for the nondissipative simulation of supercritical fluids.

For this experiment we consider carbon dioxide described by the Span--Wagner EOS \cite{SpanWagner1996} through the CoolProp HEOS implementation \cite{Bell2014CoolProp} on an isobar with \(P=10\) MPa. The results of the experiment are shown in \autoref{fig:underresolvedinterfaceco2}, \autoref{fig:tangentintersectionco2} and \autoref{tab:pverrorsco2}. This experiment is performed on a small grid of eight equally spaced nodes at \(x_j/h=-3.5 + j \) with $j = 0,1,\dots,7$. The initial condition of this experiment is a mass-density discontinuity at $x = 0$ representing an under-resolved supercritical material interface:
\begin{equation*}
    \rho_j = \begin{cases}
        \rho_L & x_j < 0 \\
        \rho_R & x_j \geq 0 
    \end{cases}, \qquad p_j = P, \qquad v_j = V,
\end{equation*}
where \(P=10\) MPa and \(V=100\ {\rm m\,s^{-1}}\). The left state is fixed at the gas-like state from the singular pair identified previously, \(\rho_L=90\ {\rm kg\,m^{-3}}\), while the dense liquid-like reference state is \(\rho_0=539.7113714\ {\rm kg\,m^{-3}}\). Eight perturbed right states are generated according to:
\begin{equation*}
    \rho_R=\rho_0+\delta\rho_0,\qquad \delta\rho_0=0.03 \mu\,\rho_0,\qquad \mu\in\{-5,-3,-2,-1,1,2,3,5\},
\end{equation*}
which gives approximately \(\delta\rho_0\approx\{-81,-49,-32, -16,16,32,49,81\}\ {\rm kg\,m^{-3}}\). Define the relative slope difference as:
\begin{equation*}
    \eta(\rho_1,\rho_2) := \frac{|\varepsilon_{\rho}(\rho_2) - \varepsilon_{\rho}(\rho_1)|}{\max(|\varepsilon_{\rho}(\rho_2)|,|\varepsilon_{\rho}(\rho_1)|)}.
\end{equation*}
Across the material interface, $\eta(\rho_L,\rho_R)$ ranges from $0.0351103$ to $0.221497$, so the switch \eqref{eq:singularityswitch} is not activated and the PEP flux is used. For each prescribed density, the temperature $T$ and internal-energy density $\varepsilon$ are obtained on the \(10\) MPa isobar using the same density-to-temperature inversion as in the preceding isobar experiment. The conservative variables are initialized as \(\bm{U}=(\rho,\rho V,\varepsilon+\tfrac12\rho V^2)\). The PEP flux uses the unique tangent-line intersection \((\tilde\rho,\tilde\varepsilon)\) of the two interface states, so that on the $(P,V)$-equilibrium set the numerical flux is \(f_\rho=\tilde\rho\, V\), \(f_m=Vf_\rho+P\), and \(f_E=V(\tilde\varepsilon+\tfrac12Vf_\rho+P)\). As a nondissipative reference scheme for general EOS we use the entropy-conserving and kinetic-energy-consistent KEEP-DG flux with the symmetrized Itoh--Abe discrete gradient as proposed in \cite{kleingeneralized}. Both schemes are advanced by one forward-Euler stage with a single common time step for all eight cases, prescribed by:
\begin{equation*}
\mathrm{CFL}_{\max} =\frac{\Delta t}{h}\max_{\delta \rho_0, j}(|V|+c_{j}^{(\delta \rho_0)})=0.5,
\end{equation*}
where $c_{j}^{(\delta \rho_0)}$ is the speed of sound at $x_j$ for the $\rho_R$ computed with perturbation $\delta \rho_0$. This gives \(\Delta t/h\approx 1.1\times10^{-3}\ {\rm s\,m^{-1}}\) and \(V\Delta t/h\approx0.11\). The reported pressure- and velocity-equilibrium errors are the relative quantities:
\begin{equation}
    \mathcal{E}_{p,\mathrm{rel}} :=\frac{\max_j|p_j^{+}-P|}{P}, \qquad \mathcal{E}_{v,\mathrm{rel}} :=\frac{\max_j|v_j^{+}-V|}{|V|}, 
    \label{eq:equilibriumerrors}
\end{equation}
with pressure $p_j^{+}$ reconstructed from the updated conservative state $\bm{U}_j^{+}$ using the CoolProp \((\rho,e)\) flash and velocity $v_j^{+}$ evaluated directly as \(m_j^{+}/\rho_j^{+}\). The relative errors are tabulated in \autoref{tab:pverrorsco2}. If pressure cannot be evaluated at some node due to inadmissible $(\rho,e)$-values, that configuration is reported as thermodynamically inadmissible in \autoref{tab:pverrorsco2}. The initial conditions and forward-Euler updates of the mass-density profile on the grid are plotted in \autoref{fig:underresolvedinterfaceco2}. The tangent-line construction of the mass-density and internal-energy-density means at the discontinuity is shown in \autoref{fig:tangentintersectionco2}. For visualization purposes the left panel in \autoref{fig:tangentintersectionco2} is detrended by the secant line $\ell_{\mathrm{sec}}$ through $\bm{\tau}_{\mathrm{fix}}$ and $\bm{\tau}_{0}$ and the right panel is detrended by the tangent line $\ell_{\mathrm{fix}}$ at $\bm{\tau}_{\mathrm{fix}}$.

The intersections in \autoref{fig:tangentintersectionco2} yield negative density means for $\delta\rho_0<0$ and means exceeding both input densities for $\delta\rho_0>0$. Their effect is apparent from the updates of the two cells immediately adjacent to the discontinuity. The mass flux across the material interface is $\tilde{\rho}_{R,L}\,V$, where $\tilde{\rho}_{R,L}$ denotes the tangent-line-intersection mean across the interface, while consistency gives $\rho_L\,V$ and $\rho_R\,V$ at their other interfaces. Writing $\rho_{\mathrm{left}}^{+}$ and $\rho_{\mathrm{right}}^{+}$ for the updated densities in the cells immediately to the left and right of the interface, respectively, we obtain:
\begin{equation*}
\rho_{\mathrm{left}}^{+}
=\rho_L-\lambda(\tilde{\rho}_{R,L}-\rho_L),
\qquad
\rho_{\mathrm{right}}^{+}
=\rho_R+\lambda(\tilde{\rho}_{R,L}-\rho_R),
\qquad
\lambda:=\frac{V\Delta t}{h}.
\end{equation*}
For $\delta\rho_0<0$, the negative mean gives a negative interface mass flux despite the positive physical velocity $V > 0$. Mass is consequently transferred towards the gas-like region on the left and its density increases, while the density in the neighboring liquid-like cell decreases. This explains the distorted profiles in \autoref{fig:underresolvedinterfaceco2}.

\begin{figure}
    \centering
    \includegraphics[width=\linewidth]{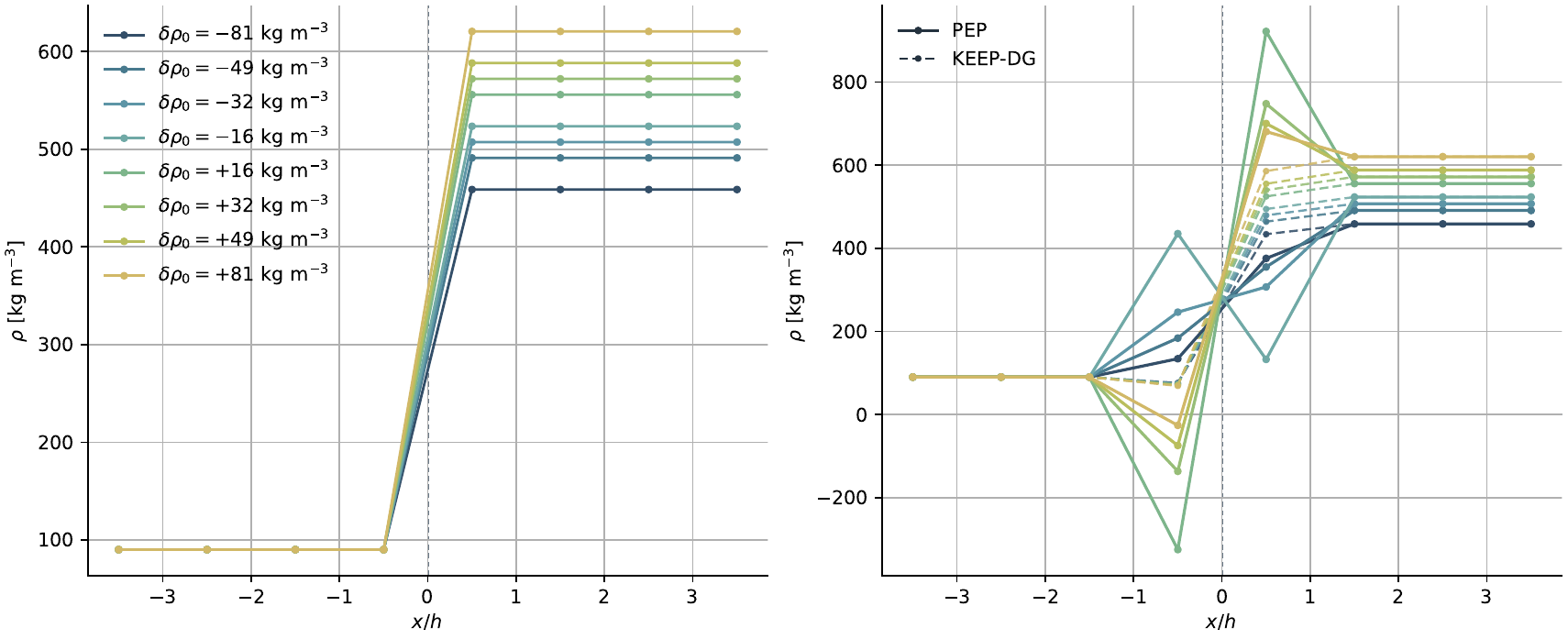}
    \caption{Left: the initial condition of the under-resolved supercritical carbon-dioxide material interfaces at \(P=10\) MPa. Right: one-step evolution using forward Euler and the conditional PEP and reference KEEP-DG schemes.}
    \label{fig:underresolvedinterfaceco2}
\end{figure}

\begin{figure}
    \centering
    \includegraphics[width=\linewidth]{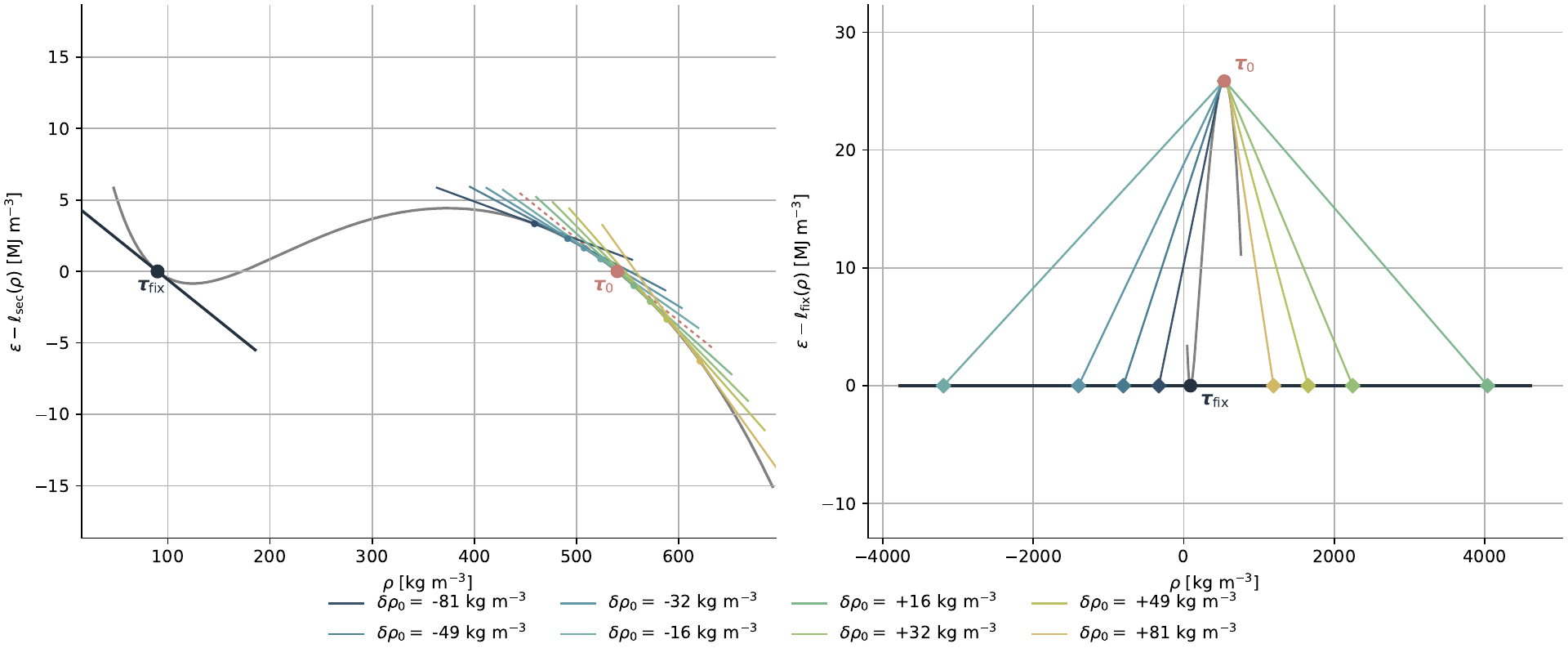}
    \caption{Left: isobar tangent lines of the perturbed states in the under-resolved supercritical carbon-dioxide material interfaces. Right: the intersection points of the tangent lines in the left panel.}
    \label{fig:tangentintersectionco2}
\end{figure}

For $\delta\rho_0>0$, the large positive mean instead produces excessive mass transfer to the right. The left cell loses mass because its outgoing flux $\tilde{\rho}_{R,L}\, V$ exceeds its incoming flux $\rho_L\,V$, while the right cell gains mass. In all $\delta \rho_0 > 0$ cases, the net loss over the chosen time step exceeds the mass initially present in the left cell, giving $\rho_{\mathrm{left}}^{+}<0$, as seen in \autoref{fig:underresolvedinterfaceco2}. This occurs even though the physical advective CFL number satisfies $\lambda\approx0.11<1$. In comparison, the KEEP-DG scheme \cite{kleingeneralized} is less oscillatory and produces no negative densities in these tests.

Note that the conditional PEP scheme is only PEP semi-discretely. Comparing the fully discrete relative $(P,V)$-equilibrium-preservation errors in \autoref{tab:pverrorsco2} shows that KEEP-DG tends to give better fully discrete pressure-equilibrium-preservation behavior for this case than the conditional PEP scheme. Due to the negative densities $\rho_j^{+}$ for $\delta \rho_0 > 0$ the pressure $p_j^{+}$ cannot even be computed for these cases. Nonetheless, both schemes preserve discrete velocity equilibrium.

\begin{table}
\centering
\begin{tabular}{ccccc}
\hline
$\delta\rho_0$ [kg m$^{-3}$] & $\mathcal{E}_{p,\mathrm{rel}}^{\mathrm{PEP}}$ & $\mathcal{E}_{p,\mathrm{rel}}^{\mathrm{KEEP}}$ & $\mathcal{E}_{v,\mathrm{rel}}^{\mathrm{PEP}}$ & $\mathcal{E}_{v,\mathrm{rel}}^{\mathrm{KEEP}}$ \\
\hline
-80.96 & $5.014\times 10^{-2}$ & $8.230\times 10^{-2}$ & $1.421\times 10^{-16}$ & $2.842\times 10^{-16}$ \\
-48.57 & $1.852\times 10^{-1}$ & $9.285\times 10^{-2}$ & $1.421\times 10^{-16}$ & $2.842\times 10^{-16}$ \\
-32.38 & $3.582\times 10^{-1}$ & $9.831\times 10^{-2}$ & $1.421\times 10^{-16}$ & $2.842\times 10^{-16}$ \\
-16.19 & $5.655\times 10^{-1}$ & $1.039\times 10^{-1}$ & $0$ & $2.842\times 10^{-16}$ \\
16.19 & $\mathrm{inadmissible}$ & $1.155\times 10^{-1}$ & $1.421\times 10^{-16}$ & $1.421\times 10^{-16}$ \\
32.38 & $\mathrm{inadmissible}$ & $1.215\times 10^{-1}$ & $2.842\times 10^{-16}$ & $1.421\times 10^{-16}$ \\
48.57 & $\mathrm{inadmissible}$ & $1.277\times 10^{-1}$ & $2.842\times 10^{-16}$ & $0$ \\
80.96 & $\mathrm{inadmissible}$ & $1.404\times 10^{-1}$ & $7.105\times 10^{-16}$ & $1.421\times 10^{-16}$ \\
\hline
\end{tabular}
\caption{Relative pressure-equilibrium and velocity-equilibrium errors of the conditional PEP and reference KEEP-DG schemes for the different perturbed states. The phrase `inadmissible' indicates that the $(\rho,e)$ flash problem to compute the pressure in a grid cell did not have a solution.}
\label{tab:pverrorsco2}
\end{table}

\begin{figure}
    \centering
    \includegraphics[width=\linewidth]{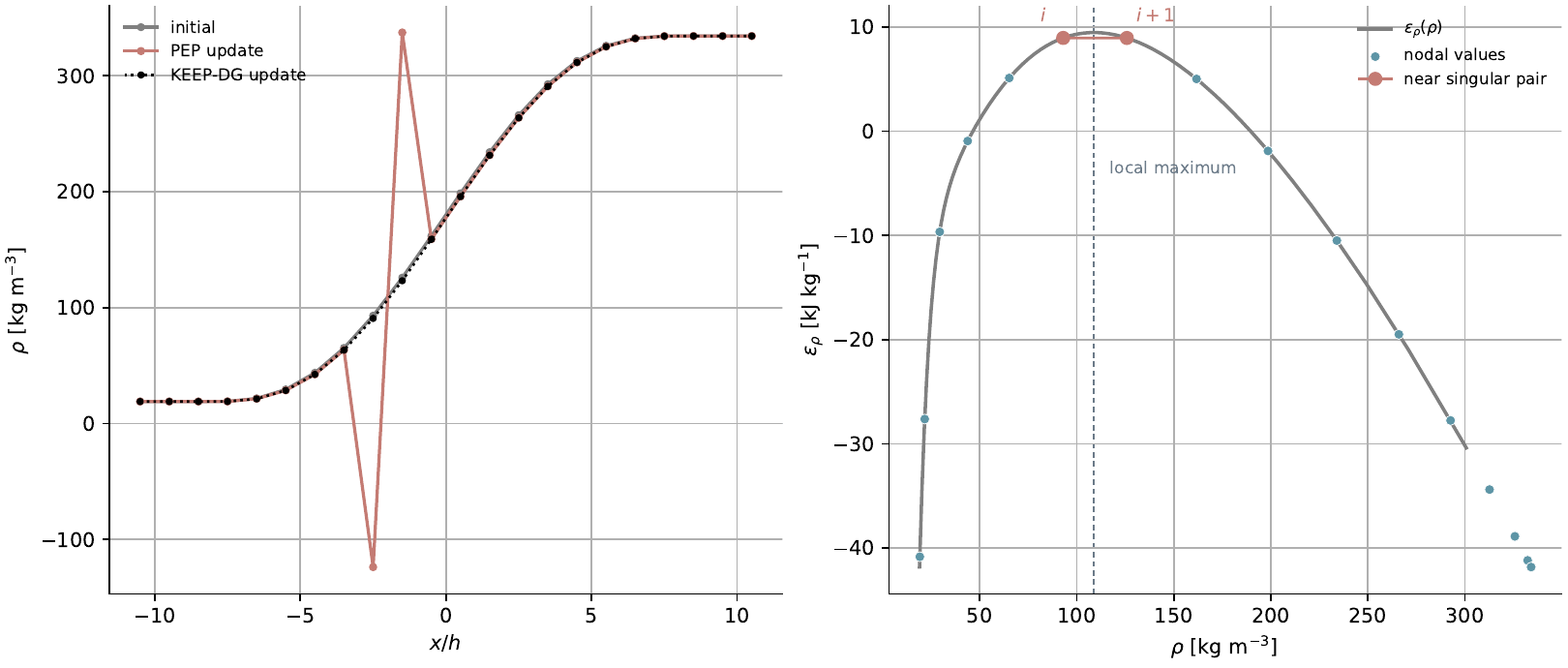}
    \caption{Left: one-step evolution of a resolved supercritical nitrogen material interface at \(P=4\) MPa using forward Euler and the conditional PEP and reference KEEP-DG schemes. Right: the nodal values of the slope $\varepsilon_{\rho}$ around the resolved material interface.}
    \label{fig:resolvedinterfacen2}
\end{figure}

\begin{figure}
    \centering
    \includegraphics[width=\linewidth]{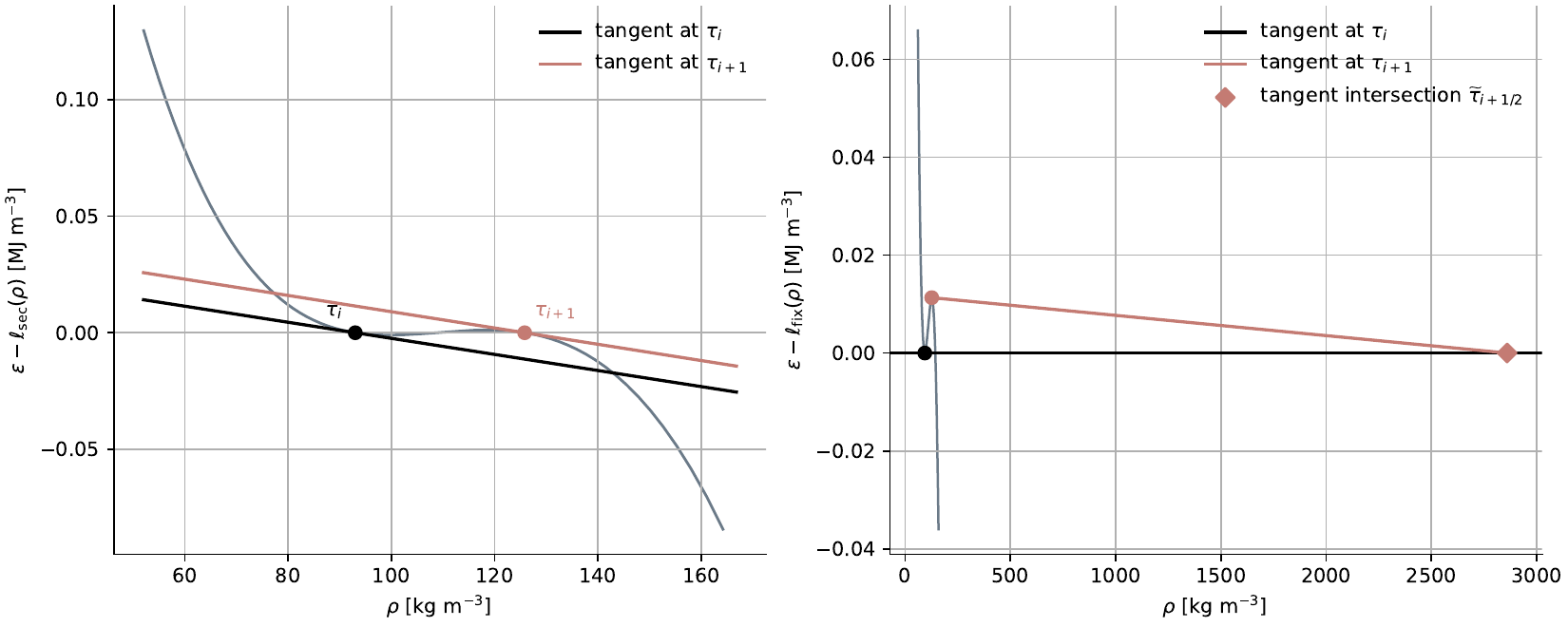}
    \caption{Left: isobar tangent lines of the near-singular pair of neighboring grid nodes in the resolved supercritical nitrogen material interface. Right: the intersection point of the tangent lines in the left panel.}
    \label{fig:tangentintersectionn2}
\end{figure}

\begin{table}
\centering
\begin{tabular}{lcc}
\hline
scheme & $\mathcal{E}_{p,\mathrm{rel}}$ & $\mathcal{E}_{v,\mathrm{rel}}$ \\
\hline
PEP & $\mathrm{inadmissible}$ & $1.421\times 10^{-16}$ \\
KEEP-DG & $4.997\times 10^{-4}$ & $2.842\times 10^{-16}$ \\
\hline
\end{tabular}
\caption{Relative pressure-equilibrium and velocity-equilibrium errors of the conditional PEP and reference KEEP-DG schemes. The phrase `inadmissible' indicates that the $(\rho,e)$ flash problem to compute the pressure in a grid cell did not have a solution.}
\label{tab:pverrorsn2}
\end{table}

\subsubsection{Resolved material interfaces}
Even when the material interface is well resolved, numerical problems can still occur for conditional PEP schemes for real gases. In particular, using a smooth supercritical material interface that is resolved by the grid, this experiment will show that the same failure mechanism can occur for states around a local maximum in the tangent-line slope $\varepsilon_{\rho}(\rho)$. 

For the resolved experiment we consider nitrogen described by the Peng--Robinson EOS \cite{PengRobinson1976, BellJager2016} through CoolProp \cite{Bell2014CoolProp} on the \(P=4\) MPa isobar. The results of this experiment are shown in \autoref{fig:resolvedinterfacen2}, \autoref{fig:tangentintersectionn2} and \autoref{tab:pverrorsn2}. The initial condition is given by a smooth transition in the mass density in a $(P,V)$-equilibrium set from $\rho_L$ to $\rho_R$. We use the singular nitrogen pair from the preceding experiment as the two asymptotic states, \(\rho_L=19\ {\rm kg\,m^{-3}}\) and \(\rho_R=334.1374674\ {\rm kg\,m^{-3}}\). The grid consists of \(N=256\) equally spaced nodes \(x_j/h=j-(N-1)/2\), \(j=0,\ldots,255\). Specifically, the initial density is prescribed as the compact \(C^2\) transition function and the pressure and velocity are constant:
\begin{gather*}
    \rho_j = \rho_L+(\rho_R-\rho_L) S\!\left(\frac{x_j/h-\phi+8}{16}\right), \qquad S(q)= \begin{cases} 0,&q\le0,\\ 6q^5-15q^4+10q^3,&0<q<1,\\ 1,&q\ge1, \end{cases} \\
    p_j = P, \qquad v_j = V,
\end{gather*}
with $P = 4$ MPa and \(V=100\ {\rm m\,s^{-1}}\). The transition has width \(16h\) and shift parameter \(\phi=-0.09703312053\). The transition is shifted, not its width. The shift is designed to induce an instability on the grid. Namely, with this shift, the local maximum of \(\varepsilon_\rho(\rho)\) lies between the neighboring nodes with indices $i$ and $i+1$ at \(x_i/h=-2.5\) and \(x_{i+1}/h= -1.5\), whose densities are \(\rho_i=93.0199707\) and \(\rho_{i+1}=125.8094368\ {\rm kg\,m^{-3}}\), with tangent slopes \(\varepsilon_{\rho,i}=8.950035373\) and \(\varepsilon_{\rho,i+1}=8.945897923\ {\rm kJ\,kg^{-1}}\), respectively, so that $\eta(\rho_i,\rho_{i+1}) \approx 4.62283\times 10^{-4}$ and is well above the \(100\epsilon_{\rm mach} \approx 2.22045 \times 10^{-14}\) singularity-switch threshold \eqref{eq:singularityswitch} used in the calculation. The PEP flux is evaluated in the same way as in the under-resolved experiment using the tangent-line-intersection mean, with ghost states equal to the corresponding endpoints of the transition. The same initial condition is also advanced with the nondissipative KEEP-DG flux described above \cite{kleingeneralized}. Both calculations use one forward-Euler stage and the same time step:
\begin{equation*}
    \mathrm{CFL}_{\max} =\frac{\Delta t}{h}\max_j(|V|+c_j)=0.5.
\end{equation*}
This gives \(\Delta t/h \approx 7.8\times10^{-4}\ {\rm s\,m^{-1}}\) and \(V\Delta t/h\approx 0.078\). The relative pressure and velocity errors are computed with the same definitions \eqref{eq:equilibriumerrors} as in the under-resolved experiment and are tabulated in \autoref{tab:pverrorsn2}. In addition, the spatial profile of the mass-density update zoomed in around the material interface and the geometry of the near-singular neighboring pair are visualized in \autoref{fig:resolvedinterfacen2} and \autoref{fig:tangentintersectionn2}, respectively. Similarly to the previous experiment, the first panel of \autoref{fig:tangentintersectionn2} is detrended by the secant $\ell_{\mathrm{sec}}$ through the two nodal states \(\bm{\tau}_i, \bm{\tau}_{i+1}\), while the second is detrended by the black tangent $\ell_{\mathrm{fix}}$ so that the far-away intersection $(\tilde{\rho},\tilde{\varepsilon})$ defining the PEP mean is displayed explicitly.

As shown in \autoref{fig:tangentintersectionn2}, the density mean is again massively overpredicted $\tilde{\rho}_{i+1,i} \gg \rho_i , \rho_{i+1}$ by the conditional PEP scheme. This overprediction is the result of the fact that the isobar tangent-line slope $\varepsilon_{\rho}$ has a local maximum between the states in cells $i$ and $i+1$, and the shift $\phi$ is deliberately chosen so that $C \epsilon_{\mathrm{mach}} \ll \eta(\rho_{i},\rho_{i+1}) \ll 1$ as can be seen in \autoref{fig:resolvedinterfacen2}. This results in nearly parallel tangent lines at the states in cells $i$ and $i+1$ for which the PEP mean is not rejected by the singularity switch \eqref{eq:singularityswitch}. The same issue as in the positive-density-perturbation cases of the under-resolved experiment now occurs between the nodal values in grid cells $i$ and $i+1$. The net mass loss during the time step exceeds the mass initially present in the cell, even though the advective $\mathrm{CFL}$ number is less than one $V\Delta t / h \approx 0.078 < 1$. The KEEP-DG scheme does not suffer from these issues. Moreover, like in the previous experiment the fully discrete relative PEP error as tabulated in \autoref{tab:pverrorsn2} shows that KEEP-DG outperforms the exact conditional PEP scheme due to the negative mass densities $\rho_j^{+} < 0$. 

In \autoref{sec:largetolerance} we briefly investigate the use of larger relative tolerances $C\epsilon_{\text{mach}}$. For these tolerances it is still possible to find shifts $\phi$ for which errors produced by the PEP scheme are substantially larger than those produced by KEEP-DG.

\section{Conclusion}\label{sec:conclusion}
In this work, we have provided a complete existence characterization for consistent and algebraically pressure-equilibrium-preserving (PEP) numerical-flux functions for general equations of state. The main contribution is a characterization of algebraic pressure-equilibrium preservation in density-energy coordinates, based on geometric properties of the equation of state. Our geometric condition avoids the additional thermodynamic-derivative assumptions required by previous algebraic compatibility conditions \cite{channodal, terashimaapproximately}. Specifically, we showed that such a consistent and algebraically PEP numerical-flux function exists on an admissible supercritical regime if and only if the tangent lines at every pair of points on each isobar in that regime have a nonempty intersection. This condition is both necessary and sufficient: necessity follows directly from pressure and velocity equilibrium, while sufficiency is established constructively through a numerical flux whose mass-density and internal-energy-density means are chosen from the corresponding tangent-line intersections. Whenever the isobar can be parameterized by mass density, this geometric condition recovers the existing algebraic PEP compatibility condition of \cite{channodal} and gives it a direct geometric interpretation. 

The numerical experiments demonstrate that this existence condition is restrictive for practically relevant real-gas equations of state \cite{Bell2014CoolProp}. For nitrogen described by the Peng--Robinson EOS \cite{PengRobinson1976, BellJager2016} and carbon dioxide described by the Span--Wagner EOS \cite{SpanWagner1996}, we identified transcritical pairs of states with parallel and noncoincident isobar tangents, showing numerically that no consistent, conservative and exactly PEP scheme of the considered form can exist on regimes containing these states. More importantly, the practical difficulties are not confined to the exactly singular configurations. Near-parallel tangent lines may cause the unique PEP mean to lie far from the two interface states and can therefore produce severely inaccurate or thermodynamically inadmissible updated states. Such near-singular behavior need not be confined to small neighborhoods of isolated singular pairs, but may affect substantial portions of an isobar. A switch-based remedy that is made sufficiently aggressive to exclude these problematic configurations may therefore deactivate the PEP flux over large parts of the relevant state space, effectively leaving the scheme pressure-equilibrium-preserving only on a restricted subset of configurations. In both under-resolved and smooth resolved material-interface experiments, this mechanism produced strong density oscillations and, in some cases, negative mass densities despite moderate CFL numbers. The comparison with the nondissipative KEEP-DG scheme \cite{kleingeneralized} further illustrates that exact semi-discrete pressure-equilibrium preservation does not necessarily translate into a more accurate or robust fully discrete approximation for the test cases and type of discretization \eqref{eq:discretization} we considered. These results heavily suggest that if, in addition to consistency and numerical conservation, the pressure-equilibrium-preservation property is desired for numerical schemes for supercritical fluids, novel approaches are required. 

\section*{CRediT authorship contribution statement}
\textbf{R.B. Klein}: Conceptualization, Methodology, Software, Formal analysis, Investigation, Validation, Visualization, Writing – original draft, Writing – review \& editing. 

\section*{Software and reproducibility statement}
The code used to generate the figures and reproduce all numerical values in the main body of the text is made available at \cite{reprorepo}.

\section*{Declaration of Generative AI and AI-assisted technologies in the writing process}
ChatGPT 5.6 Sol and ChatGPT 6 Astra were used to assist in the writing of this manuscript and in the formalization of some proofs of intermediate results. ChatGPT 5.6 Sol and ChatGPT 6 Astra wrote the code for this manuscript. An independent ChatGPT 6 Astra instance was used to review the manuscript and code. After using these tools/services, the author reviewed and edited the content as needed and takes full responsibility for the content of the publication.

\section*{Declaration of competing interests}
The author declares that there are no known competing financial interests or personal relationships that could have appeared to influence the work reported in this paper.

\section*{Data availability}
No data to declare. 

\section*{Acknowledgements}
The author gratefully acknowledges the funding for this project obtained from Delft University of Technology. 

\appendix
\section{Large-tolerance behavior}\label{sec:largetolerance}
To investigate larger switching tolerances, we repeat the resolved
material-interface experiment with shifts $\phi=-0.09624928$,
$\phi=-0.098199541$, $\phi=-0.118$ and $\phi=-0.322$, using
$C\epsilon_{\mathrm{mach}}=10^{-4}$, $10^{-3}$, $0.01$ and $0.10$,
respectively. All other parameters remain unchanged, and arithmetic
means of density and internal-energy density are used wherever the
switch activates. We measure density accuracy by:
\begin{equation*}
    \mathcal{E}_{\rho,\mathrm{rel}}
    :=\max_j
    \frac{|\rho_j^{+}-\rho_{\mathrm{ex},j}^{+}|}
         {\rho_{\mathrm{ex},j}^{+}},
    \qquad
    \rho_{\mathrm{ex},j}^{+}
    :=\rho_{\mathrm{init}}(x_j-V\Delta t),
\end{equation*}
where $\rho_{\mathrm{init}}$ is the continuous initial density profile
and $\rho_{\mathrm{ex},j}^{+}$ is its exact advected value at node $j$
after one time step. For the neighboring nodes $i,i+1$ straddling the
maximum of $\varepsilon_\rho$, the relative slope differences
$\eta(\rho_i,\rho_{i+1})$ slightly exceed the respective switching
tolerances, so the PEP flux remains active at these interfaces.
As shown in \autoref{tab:resolvedlargeswitch}, density errors of
approximately $1092\%$, $109\%$, $10.9\%$ and $1.22\%$ persist
despite the enlarged tolerances. KEEP-DG produces substantially
smaller density errors in all four cases.

\begin{table}[h!]
\centering
\begin{tabular}{ccccc}
\hline
$C\epsilon_{\mathrm{mach}}$
& $\phi$
& $\eta(\rho_i,\rho_{i+1})$
& $\mathcal{E}_{\rho,\mathrm{rel}}^{\mathrm{PEP}}$
& $\mathcal{E}_{\rho,\mathrm{rel}}^{\mathrm{KEEP}}$ \\
\hline
$0.01\%$ & $-0.09624928$ & $0.010010\%$ & $1091.9\%$ & $0.26335\%$ \\
$0.1\%$ & $-0.098199541$ & $0.100100\%$ & $109.26\%$ & $0.26333\%$ \\
$1\%$  & $-0.118$ & $1.0103\%$  & $10.894\%$ & $0.26304\%$ \\
$10\%$ & $-0.322$ & $10.0065\%$ & $1.2243\%$ & $0.25032\%$ \\
\hline
\end{tabular}
\caption{Relative density errors for the resolved nitrogen material
interface with switching tolerances of $0.01\%$, $0.1\%$, $1\%$
and $10\%$. All errors are measured after one forward-Euler update
at $\mathrm{CFL}_{\max}=0.5$. Both methods retain positive densities
in the $1\%$ and $10\%$ cases. For $0.01\%$ and $0.1\%$, the
conditional PEP updates have minimum densities of approximately
$-899$ and $-8.39$ ${\rm kg\,m^{-3}}$, respectively, while KEEP-DG
retains positive densities.}
\label{tab:resolvedlargeswitch}
\end{table}

\section{Proofs}
\subsection{Density-energy diffeomorphism}\label{ssec:densityenergydiffeomorphism}
The proof of \autoref{prop:densityenergydiffeomorphism} is given as:
\begin{proof}
    By \eqref{eq:thermodynamicstatespace}, \eqref{eq:pressuretemperature} and the monotonicity property in \autoref{dfn:admissiblesupercriticalregime}, the Jacobian $D\Phi$ of $\Phi$ has a determinant:
    \begin{equation*}
        \det D\Phi(\bm{\eta}) = - \frac{T(\bm{\eta})}{\nu^3} < 0,  \qquad \forall \bm{\eta} \in \mathcal{S}_{\eta}.
    \end{equation*}
    As we have shown $\Phi \in C^2(\mathcal{S}_{\eta})$, by the inverse-function theorem $\Phi$ is therefore a local $C^2$ diffeomorphism. If it is also injective, $\Phi$ is then a $C^2$ diffeomorphism onto its image $\mathcal{S}_{\tau}$ \cite{guillemindifferential}. We will show $\Phi$ is injective by contradiction. Assume $\Phi$ is not injective. Then there exist $(\nu_1,\sigma_1), (\nu_2,\sigma_2) \in \mathcal{S}_{\eta}$ for which $(\nu_1,\sigma_1) \neq (\nu_2,\sigma_2)$ as points in $\mathbb{R}^2$, so that $\Phi(\nu_1,\sigma_1) = \Phi(\nu_2,\sigma_2)$. Then $1/\nu_1 = 1/\nu_2$, so $\nu_1 = \nu_2 = \nu$. Also $e(\nu,\sigma_1) / \nu = e(\nu,\sigma_2) /\nu$, so $e(\nu,\sigma_1) = e(\nu,\sigma_2)$. But by the monotonicity condition $e$ is injective at fixed $\nu$, thus $\sigma_1 = \sigma_2$; a contradiction. Thus $\Phi$ is an injective local $C^2$ diffeomorphism and therefore a $C^2$ diffeomorphism onto its image.
\end{proof}

\subsection{Pressure regularity}\label{ssec:pressureregularity}
For clarity we add a subscript with the variables to the gradient operator throughout the proof of \autoref{prop:pressureregularity}. The proof of \autoref{prop:pressureregularity} is given as:
\begin{proof}
    Denote pressure in the original $(\nu,\sigma)$-coordinates by $\tilde{p}$. In density-energy coordinates the pressure is given by $p(\bm{\tau}):=\tilde{p}\bigl(\Phi^{-1}(\bm{\tau})\bigr)$. Since $\tilde{p}=-e_\nu \in C^1(\mathcal{S}_{\eta})$ and $\Phi^{-1}\in C^2(\mathcal{S}_{\tau})$ by \autoref{prop:densityenergydiffeomorphism}, it follows that
    $p\in C^1(\mathcal{S}_{\tau})$. Moreover, strict stability gives:
    \begin{equation*}
        \tilde{p}_\nu(\bm{\eta})=-e_{\nu\nu}(\bm{\eta}) <0, \qquad \forall \bm{\eta}\in\mathcal{S}_{\eta},
    \end{equation*}
    and hence $\nabla_{\bm{\eta}}\, \tilde{p}(\bm{\eta})\neq\bm{0}$. Since $\tilde{p}(\bm{\eta})=p\bigl(\Phi(\bm{\eta})\bigr)$ the chain rule yields:
    \begin{equation*}
        \nabla_{\bm{\eta}}\, \tilde{p}(\bm{\eta})
        =
        D\Phi(\bm{\eta})^{T}
        \nabla_{\bm{\tau}}\, p\bigl(\Phi(\bm{\eta})\bigr).
    \end{equation*}
    As $D\Phi(\bm{\eta})$ is nonsingular and $\nabla_{\bm{\eta}}\, \tilde{p}(\bm{\eta})\neq\bm{0}$, the gradient $\nabla_{\bm{\tau}}\, p(\Phi(\bm{\eta}))$ cannot vanish. Since $\Phi$ maps $\mathcal{S}_{\eta}$ onto $\mathcal{S}_{\tau}$, the result follows.
\end{proof}

\subsection{Isobar curves}\label{ssec:isobarcurves}
The proof of \autoref{prop:isobarcurves} is given as:
\begin{proof}
    Fix some $P \in \mathbb{R}_+$ so that $\Gamma_P$ is nonempty. Let $\bm{\tau}_0 \in \Gamma_P$ be arbitrary. According to \autoref{prop:pressureregularity} at least one component of the gradient is nonzero, thus either $p_{\rho}(\bm{\tau}_0) \neq 0$ or $p_{\varepsilon}(\bm{\tau}_0) \neq 0$ holds. Assume the case $p_{\varepsilon}(\bm{\tau}_0) \neq 0$ holds. Define the function $F : \mathcal{S}_{\tau} \rightarrow \mathbb{R}$ as:
    \begin{equation*}
        F(\bm{\tau}) := p(\bm{\tau}) - P.
    \end{equation*}
    By \autoref{prop:pressureregularity} it holds that $F \in C^1(\mathcal{S}_{\tau})$ and:
    \begin{equation*}
        F_{\varepsilon}(\bm{\tau}_0) = p_\varepsilon(\bm{\tau}_0) \neq 0, \qquad F(\bm{\tau}_0) = 0.
    \end{equation*}
    Denote $\bm{\tau}_0 =: (\rho_0,\varepsilon_0)$. The implicit-function theorem \cite{marsdenvector} can be applied and asserts the existence of open intervals $I, K \subset \mathbb{R}$ such that $\bm{\tau}_0 \in I \times K \subset \mathcal{S}_{\tau}$ and a function $\psi : I \rightarrow K$ such that $\psi \in C^1(I)$ and so that for all $\rho \in I$ there exists a unique $\psi(\rho) \in K$ so that:
    \begin{equation*}
        F(\rho,\psi(\rho)) = 0.
    \end{equation*}
    By uniqueness and the facts that $\bm{\tau}_0 \in I \times K$ and $F(\bm{\tau}_0) = 0$, it must hold that $\psi(\rho_0) = \varepsilon_0$. Define $J := I$ and the map:
    \begin{equation*}
        \varphi : I \rightarrow \mathbb{R}^2, \qquad \varphi(\rho) := \begin{bmatrix}
            \rho \\ \psi(\rho)
        \end{bmatrix}.
    \end{equation*}
    Since $\psi(\rho_0) = \varepsilon_0$, choosing $t_0 = \rho_0$ gives $\varphi(\rho_0) = \bm{\tau}_0$. It also holds that $\varphi \in C^1(I)$ and since $\tfrac{d\rho}{d\rho} = 1$, we have $\varphi'(\rho) \neq 0$ for all $\rho \in I$, thus the map is regular. It remains to show that $\varphi(I) = \Gamma_P \cap (J \times K)$ and that it has a $C^1$ inverse on its image. By the implicit-function theorem, if $\rho \in I = J$, then $\psi(\rho) \in K$ thus $\varphi(\rho) \in J\times K \subset \mathcal{S}_{\tau}$. Furthermore, $F(\varphi(\rho)) = 0$ so $\varphi(\rho) \in \Gamma_P$ by definition. Thus $\varphi(I) \subseteq \Gamma_P \cap (J\times K)$. Now assume $(\rho,\varepsilon) \in \Gamma_P \cap (J \times K)$. Then $F(\rho,\varepsilon) = 0$ and $\rho \in I = J$, while $\varepsilon \in K$. But $\psi(\rho) \in K$ is the unique value so that $F(\varphi(\rho)) = F(\rho,\psi(\rho))=0$. Thus $\psi(\rho) = \varepsilon$ must hold. Then $(\rho,\varepsilon) \in \varphi(I)$, so $\varphi(I) = \Gamma_P \cap (J\times K)$. Finally, the inverse of $\varphi$ for $\bm{\tau} \in \varphi(I)$ is simply the $C^1$ projection $\varphi^{-1}(\bm{\tau}) = \rho$.

    The proof for the case $p_{\rho}(\bm{\tau}_0) \neq 0$ is completely analogous.
\end{proof}

\subsection{Tangent-space orthogonality}\label{ssec:tangentspaceorthogonality}
The proof of \autoref{lem:tangentspaceorthogonality} is given as:
\begin{proof}
    Let $P \in \mathbb{R}_+$ so that $\Gamma_P$ is nonempty. Let $\bm{\tau}_0 := (\rho_0,\varepsilon_0) \in \Gamma_P$ be arbitrary. Due to \autoref{prop:pressureregularity}, either $p_{\rho}(\bm{\tau}_0) \neq 0$ or $p_{\varepsilon}(\bm{\tau}_0) \neq 0$ holds necessarily. Assume the latter. Repeating the proof of \autoref{prop:isobarcurves}, there exist open intervals $I,K \subset \mathbb{R}$ so that $\bm{\tau}_0 \in I \times K \subset \mathcal{S}_{\tau}$ and a function $\psi : I \rightarrow K$ for which $\psi \in C^1(I)$ and the following holds for any $\rho \in I$:
    \begin{equation*}
        p(\rho,\psi(\rho)) - P = 0.
    \end{equation*}
    Moreover, for all $\rho \in I$ it holds that $p_{\varepsilon}(\rho,\psi(\rho)) \neq 0$. Using implicit differentiation we have:
    \begin{equation*}
        \psi'(\rho) = -\frac{p_{\rho}(\rho,\psi(\rho))}{p_{\varepsilon}(\rho,\psi(\rho))}.
    \end{equation*}
    Since, in the proof of \autoref{prop:isobarcurves} the parameterization $\varphi$ was defined as $\varphi(\rho) = (\rho, \psi(\rho))$ and $T_{\bm{\tau}_0}\Gamma_P := \operatorname{span}(\varphi'(\rho_0))$, we have by definition:
    \begin{equation*}
        T_{\bm{\tau}_0}\Gamma_P := \operatorname{span}\left(\begin{bmatrix}
            1 \\ \displaystyle-\frac{p_{\rho}(\bm{\tau}_0)}{p_{\varepsilon}(\bm{\tau}_0)}
        \end{bmatrix}\right).
    \end{equation*}
    Since $p_{\varepsilon}(\bm{\tau}_0) \neq 0$, the first equality in \eqref{eq:tangentspaceidentity} follows after multiplying by $-p_{\varepsilon}(\bm{\tau}_0)$. The same reasoning can be repeated for the $p_{\rho}(\bm{\tau}_0)\neq 0$ case.
    
    Assume $\bm{q} \in \operatorname{span}((-p_{\varepsilon}(\bm{\tau}_0),p_{\rho}(\bm{\tau}_0)))$, so $\bm{q} = c \cdot (-p_{\varepsilon}(\bm{\tau}_0),p_{\rho}(\bm{\tau}_0))$ for some $c \in \mathbb{R}$ as we have just demonstrated. Then:
    \begin{equation*}
        \left<\nabla p(\bm{\tau}_0), \bm{q} \right> = c \cdot (- p_{\rho}(\bm{\tau}_0) p_{\varepsilon}(\bm{\tau}_0) + p_{\varepsilon}(\bm{\tau}_0) p_{\rho}(\bm{\tau}_0)) = 0.
    \end{equation*}
    Thus $T_{\bm{\tau}_0}\Gamma_P \subseteq \ker\left(\left<\nabla p(\bm{\tau}_0),\cdot \right>\right)$. Now let $\bm{q} := (q_1,q_2) \in \ker\left(\left<\nabla p(\bm{\tau}_0),\cdot \right>\right)$. By \autoref{prop:pressureregularity} either $p_{\rho}(\bm{\tau}_0) \neq 0$ or $p_{\varepsilon}(\bm{\tau}_0) \neq 0$ holds necessarily. Again, assume the latter. Then:
    \begin{equation*}
        0 = \left<\nabla p(\bm{\tau}_0), \bm{q} \right> = p_{\rho}(\bm{\tau}_0) q_1 + p_{\varepsilon}(\bm{\tau}_0)q_2.
    \end{equation*}
    Since $p_{\varepsilon}(\bm{\tau}_0) \neq 0$, we have:
    \begin{equation*}
        q_2 = -\left(\frac{p_{\rho}(\bm{\tau}_0)}{p_{\varepsilon}(\bm{\tau}_0)}\right) q_1.
    \end{equation*} 
    Thus $\bm{q} \in \operatorname{span}\left(\left(1, -\left(\frac{p_{\rho}(\bm{\tau}_0)}{p_{\varepsilon}(\bm{\tau}_0)}\right)\right)\right) = T_{\bm{\tau}_0}\Gamma_P$ by our previous reasoning. Thus $T_{\bm{\tau}_0}\Gamma_P = \ker\left(\left<\nabla p(\bm{\tau}_0),\cdot \right>\right)$. Similar arguments can be made for the $p_{\rho}(\bm{\tau}_0) \neq 0$ case.
\end{proof}

\subsection{Supercritical conservative variables}\label{ssec:supercriticalconservativevariables}
The proof of \autoref{prop:supercriticalconservativevariables} is given as:
\begin{proof}
    Since $\rho>0$ on $\mathcal{S}_{\tau}$, it also holds that $\rho > 0$ on $\mathcal{S}_U = \Theta(\mathcal{S}_{\tau} \times \mathbb{R})$, since $\Theta$ is the identity in its first component. Then one readily verifies that:
    \begin{equation}
        \Theta^{-1}(\bm{U})
        =
        \bigl(\bm{\tau}(\bm{U}),v(\bm{U})\bigr),
        \label{eq:thetainverse}
    \end{equation}
    defines a smooth inverse on $\mathcal{S}_U$. Hence $\Theta$ is a smooth
    diffeomorphism. For brevity, denote:
    \begin{equation*}
        \tilde{\mathcal{S}}_U := \{(\rho,m,E)\in \mathbb{R}^3\,:\,\rho>0,\,\bm{\tau}(\rho,m,E)\in \mathcal{S}_{\tau} \}.
    \end{equation*}
    Assume $\bm{U} \in \mathcal{S}_U$. Then there exists some $\bm{V} := (\bm{\tau},v) \in \mathcal{S}_{\tau} \times \mathbb{R}$ such that $\Theta(\bm{V}) = \bm{U}$. Therefore $(\bm{\tau},v) = \bm{V} = \Theta^{-1}(\Theta(\bm{V})) = \Theta^{-1}(\bm{U}) = (\bm{\tau}(\bm{U}),v(\bm{U}))$. Since $\bm{\tau} \in \mathcal{S}_{\tau}$, we have $\bm{\tau}(\bm{U})\in \mathcal{S}_{\tau}$. Thus $\mathcal{S}_U \subseteq \tilde{\mathcal{S}}_U$. Now let $\bm{U} := (\rho,m,E)\in \tilde{\mathcal{S}}_U$. Then $\rho > 0$, so $v(\bm{U}) = m/\rho \in \mathbb{R}$. Also, $\bm{\tau}(\bm{U}) \in \mathcal{S}_{\tau}$. Then $(\bm{\tau}(\bm{U}),v(\bm{U}))\in \mathcal{S}_{\tau}\times \mathbb{R}$. But direct substitution in $\Theta$ gives $\bm{U} =\Theta(\bm{\tau}(\bm{U}),v(\bm{U})) \in \Theta(\mathcal{S}_{\tau}\times \mathbb{R})$. Thus $\mathcal{S}_U = \tilde{\mathcal{S}}_U$.
\end{proof}

\subsection{Equilibrium set}\label{ssec:equilibriumset}
The proof of \autoref{prop:equilibriumset} is given as:
\begin{proof}
    Let $\bm{U} \in \mathcal{M}_{P,V}$. Then $\Theta^{-1}(\bm{U}) = (\bm{\tau}(\bm{U}),v(\bm{U})) \in \mathcal{S}_{\tau} \times \mathbb{R}$ by \autoref{prop:supercriticalconservativevariables}. Moreover, $p(\bm{\tau}(\bm{U})) = P$, thus $\bm{\tau}(\bm{U}) \in \Gamma_P$. Also, $v(\bm{U}) = V$. Then, since $\Theta(\bm{\tau}(\bm{U}), V) = \Theta(\bm{\tau}(\bm{U}), v(\bm{U})) = \Theta(\Theta^{-1}(\bm{U})) = \bm{U}$ and $\bm{\tau}(\bm{U}) \in \Gamma_P$, $\bm{U} \in \{ \Theta(\bm{\tau},V)\, :\, \bm{\tau}\in \Gamma_P\}$. So $\mathcal{M}_{P,V} \subseteq \{ \Theta(\bm{\tau},V)\, :\, \bm{\tau}\in \Gamma_P\}$. Now let $\bm{U} \in \{ \Theta(\bm{\tau},V)\, :\, \bm{\tau}\in \Gamma_P\}$. Then, $\bm{U} = \Theta(\bm{\tau},V)$ for some $\bm{\tau} \in \mathcal{S}_{\tau}$, thus $\bm{U} \in \mathcal{S}_U$ by \autoref{prop:supercriticalconservativevariables}. Moreover, $(\bm{\tau}(\bm{U}),v(\bm{U})) = \Theta^{-1}(\bm{U}) = \Theta^{-1}(\Theta(\bm{\tau},V)) = (\bm{\tau},V)$. Since $\bm{\tau} \in \Gamma_P$, we have $p(\bm{\tau}(\bm{U})) =p(\bm{\tau}) = P$. Therefore $\bm{U} \in \mathcal{M}_{P,V}$ and $\mathcal{M}_{P,V} = \{ \Theta(\bm{\tau},V)\, :\, \bm{\tau}\in \Gamma_P\}$.
\end{proof}

\subsection{Flux consistency}\label{ssec:fluxconsistency}
The proof of \autoref{prop:fluxconsistency} is given as:
\begin{proof}
    We will first prove $\bm{f}_h$ is well-defined. Let $\bm{U}_R,\bm{U}_L \in \mathcal{S}_U$ be arbitrary so that $P = p(\bm{\tau}_R) = p(\bm{\tau}_L)$ with $\bm{\tau}_R := \bm{\tau}(\bm{U}_R)$ and $\bm{\tau}_L := \bm{\tau}(\bm{U}_L)$. Thus $\bm{\tau}_R, \bm{\tau}_L \in \Gamma_P$ and so by assumption $L_{\bm{\tau}_L}\Gamma_P \cap L_{\bm{\tau}_R}\Gamma_P \neq \varnothing$. Since $L_{\bm{\tau}_L}\Gamma_P$ and $L_{\bm{\tau}_R}\Gamma_P$ are affine subspaces in $\mathbb{R}^2$ with nonempty intersection, their intersection is either a unique point or the entire affine subspace formed by either tangent line. Either way, the minimization in \eqref{eq:massenergymean} has a unique solution. When the pressures differ, \eqref{eq:massenergymean} selects the baseline mean $\bm{\tau}^*$, which is well-defined by assumption.
\\
    We now show consistency. Let $\bm{U}_R=\bm{U}_L =\bm{U} \in \mathcal{S}_U$. Denote $\bm{\tau} := \bm{\tau}(\bm{U})$. It holds that $\bm{\tau}_R = \bm{\tau}_L = \bm{\tau}$ and $p(\bm{\tau}_R) = p(\bm{\tau}_L) = p(\bm{\tau}) = P$. Since both points are the same, $L_{\bm{\tau}_R}\Gamma_P \cap L_{\bm{\tau}_L}\Gamma_P = L_{\bm{\tau}}\Gamma_P$. By definition $\bm{\tau} \in L_{\bm{\tau}}\Gamma_P$. Moreover, by consistency $\bm{\tau}^*(\bm{U},\bm{U}) = \bm{\tau}(\bm{U}) = \bm{\tau}$. The unique minimizer in \eqref{eq:massenergymean} is therefore $\bm{\tau} = (\rho,\varepsilon)$. By consistency of the other means in \eqref{eq:flux} the entire flux $\bm{f}_h$ is consistent.
\end{proof}

\bibliographystyle{elsarticle-num}
\bibliography{cas-refs}

\end{document}